\newtheorem{thm}{Theorem}[section]
\newtheorem{lem}[thm]{Lemma}
\newtheorem{cor}[thm]{Corollary}
\newtheorem{prop}[thm]{Proposition}
\newtheorem{claim}[thm]{Claim}
\newtheorem{prob}[thm]{Problem}
\theoremstyle{definition}
\newtheorem{defin}[thm]{Definition}
\newtheorem{rem}[thm]{Remark}
\newtheorem{remark}[thm]{Remark}
\newtheorem{remarks}[thm]{Remarks}
\newtheorem{thmAlfa}{Theorem}
\def\E{{\mathbb E}}
\def\Q{{\mathbb Q}}
\def\N{{\mathbb N}}
\def\M{{\mathbb M}}
\def\R{{\mathbb R}}
\def\Z{{\mathbb Z}}
\def\cC{{\mathcal C}}
\def\cD{{\mathcal D}}
\def\cM{{\mathcal M}}
\def\cQ{{\mathcal Q}}
\def\cV{{\mathcal V}}
\newcommand{\kleq}{\!\leq\!}
\newcommand\kin{\!\in\!}
\newcommand{\nb}{\bar n}
\newcommand{\mb}{\bar m}
\def\vp{\varepsilon}
\newcommand{\ie}{{\it i.e.,\ }}
\newcommand{\co}{\mathrm{c}_0}
\newcommand{\diam}{\text{\rm diam}}
\newcommand{\Lip}{\text{\rm Lip}}
\newcommand{\s}{\mathrm{sum}}
\newcommand{\e}{\varepsilon}
\newcommand{\car}{\textrm{\bbold 1}}
\font\bbold=bbold12
\begin{document}

\allowdisplaybreaks

\title{Coarse and Lipschitz universality}

\author{F.~Baudier}
\address{F.~Baudier, Department of Mathematics, Texas A\&M University, College Station, TX 77843, USA}
\email{florent@math.tamu.edu}

\author{G.~Lancien}
\address{G.~Lancien, Laboratoire de Math\'ematiques de Besan\c con, Universit\'e Bourgogne Franche-Comt\'e, 16 route de Gray, 25030 Besan\c con C\'edex, Besan\c con, France}
\email{gilles.lancien@univ-fcomte.fr}

\author{P.~Motakis}
\address{P.~Motakis, Department of Mathematics, University of
Illinois at Urbana-Champaign, Urbana, IL 61801, U.S.A.}
\email{pmotakis@illinois.edu}

\author{Th.~Schlumprecht}
\address{Th.~Schlumprecht, Department of Mathematics, Texas A\&M University, College Station, TX 77843-3368, USA, and Faculty of Electrical Engineering,
Czech Technical University in Prague, Zikova 4, 16627, Prague, Czech Republic}
\email{schlump@math.tamu.edu}

\thanks{The first named author was supported by the National Science
Foundation under Grant Number DMS-1800322.
The second named author was supported by the French
``Investissements d'Avenir'' program, project ISITE-BFC (contract
 ANR-15-IDEX-03).
The third named author was  supported by the National Science Foundation
under Grant Numbers DMS-1600600 and DMS-1912897.
The fourth named author was supported by the National Science Foundation under Grant Numbers DMS-1464713 and DMS-1711076 .}
\keywords{}
\subjclass[2010]{46B06, 46B20, 46B85, 46T99, 05C63, 20F65}

\maketitle
\begin{abstract} In this paper we  provide several  \emph{metric universality} results. We exhibit  for certain classes $\cC$ of metric spaces, families of metric spaces $(M_i, d_i)_{i\in I}$ which have the property that a  metric space $(X,d_X)$ in $\cC$ is coarsely, resp. Lipschitzly, universal for all spaces in $\cC$ if the collection of spaces $(M_i,d_i)_{i\in I}$ equi-coarsely, respectively equi-Lipschitzly, embeds into $(X,d_X)$. Such families are built as certain Schreier-type metric subsets of $\co$. We deduce
a metric analog to Bourgain's theorem, which generalized Szlenk's theorem, and prove that a space which is coarsely universal for all separable reflexive asymptotic-$c_0$ Banach spaces is coarsely universal for all separable metric spaces.
One of our coarse universality results is valid under Martin's Axiom and the negation of the Continuum Hypothesis. We discuss the strength of the universality statements that can be obtained without these additional set theoretic assumptions. 
In the second part of the paper, we study universality properties of Kalton's interlacing graphs. In particular, we prove  that every finite metric space embeds almost isometrically in some interlacing graph of large enough diameter.
\end{abstract}

 \setcounter{tocdepth}{3}
\tableofcontents
\section{Introduction}\label{S:1}
A metric space $\mathsf{Y_{cu}}$ is said to be coarsely universal for a class $\cM$ of metric spaces if every metric space in $\cM$ coarsely embeds into $\mathsf{Y_{cu}}$. By modifying the definition accordingly we can obviously consider universality in various categories: [Banach spaces$\sim$isomorphic embeddings], [metric spaces$\sim$bi-Lipschitz embeddings], etc. A natural question is thus the following: Given a class of metric spaces can we find a metric space that is universal for this class with respect to a given type of metric embedding? There are numerous embedding results that provide satisfactory answers to this broad question. That $\ell_\infty$ is isometrically universal for the class of separable metric spaces is a reformulation of the (elementary but fundamental) Fr\'echet-Kuratowski embedding theorem \cite{Frechet1910, Kuratowski}. Note that $\ell_\infty$ is not separable and thus does not belong to the class it is a universal space for. This leads us to refine the question to, say: is there a member of the class that is universal for the class itself? Urysohn's space \cite{Urysohn} answers positively this question for the class of separable metric spaces and isometric embeddings. However, it is not always possible to find a universal space within the considered class. A (relatively) simple example is the class of separable super-reflexive Banach spaces when universality refers to isomorphic embeddings. A much more difficult result of Szlenk \cite{Szlenk1968} states that there is no separable reflexive Banach space that is isomorphically universal for the class of separable reflexive Banach spaces. Szlenk's theorem was improved by Bourgain \cite{Bourgain1980} who showed that a separable Banach space that is isomorphically universal for the class of separable reflexive spaces is also isomorphically universal for \emph{all} separable Banach spaces. So if we want to show that a separable Banach space contains an isomorphic copy of every separable Banach space we only need to show that it contains an isomorphic copy of every separable reflexive Banach space. To prove this remarkable rigidity result in the context of isomorphic universality, Bourgain ingeniously incorporated techniques from descriptive set theory. Bourgain's descriptive set theoretic approach for universality problems, was further extended by Bossard \cite{Bossard2002} to show that a class of Banach spaces which is analytic, in the Effros-Borel structure of subspaces of $C[0,1]$, and contains all separable reflexive Banach spaces, must contain a universal space.

We will not discuss the numerous variants of the universality problem but instead we will focus on the following rigidity phenomenon in the context of universality. We voluntarily do not specify a specific type of embeddings.

\begin{prob}\label{P:1.1}
For what classes $\cC$ and $\cD$ of metric spaces such that $\cC\subset\cD$, a universal space for $\cC$ is also a universal space for $\cD$?
\end{prob}

The first part of the article revolves around Problem \ref{P:1.1} in the Lipschitz and coarse categories. Our first theorem says that a metric space is Lipschitzly universal for the class of all separable metric spaces, if it is universal for the uncountable collection $\cC:=\{(\mathrm{S}_\alpha(\Q),d_\infty)\colon \alpha<\omega_1\}$, which we will refer to as the collection of rational-valued smooth Schreier metric spaces. None of the metric spaces in $\cC$ is coarsely universal, but since they are built as certain Schreier-type metric subsets of $\co$, their entire hierarchy captures enough structure of $\co$, and thus confers its good universality properties.
\begin{thmAlfa}\label{T:A}
If a complete separable metric space contains  bi-Lipschitz copies of $(\mathrm{S}_\alpha(\Q),d_\infty)$ for every countable ordinal $\alpha$, then it is Lipschiztly universal for the class of all separable metric spaces.
\end{thmAlfa}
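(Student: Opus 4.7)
The plan is to adapt Bourgain's descriptive set-theoretic proof of his extension of Szlenk's theorem to the bi-Lipschitz category, using the hierarchy $(\mathrm{S}_\alpha(\Q),d_\infty)$ in place of the Szlenk hierarchy. Since by Aharoni's theorem every separable metric space bi-Lipschitz embeds into $c_0$, it suffices to exhibit a bi-Lipschitz copy of $c_0$ inside $X$. Rather than constructing one directly, I would argue by contradiction via a boundedness principle in the Polish space of separable complete metric spaces: the class of spaces that bi-Lipschitz embed into $X$ is analytic, while under our hypothesis it meets spaces of arbitrarily large Cantor--Bendixson--Schreier rank, so it cannot be contained in the coanalytic class of ``non-universal'' spaces.

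For the Polish coding, I would view complete separable metric spaces as points of the Effros--Borel space $F(\mathbb{U})$ of closed subsets of the Urysohn space $\mathbb{U}$, or equivalently code them by a countable dense sequence $(x_n)$ together with its distance matrix $(d_M(x_n,x_m))_{n,m}$. Writing
\[
\mathcal{L}(X) := \{M : M \text{ bi-Lipschitz embeds into } X\},
\]
I would verify that $\mathcal{L}(X)$ is $\boldsymbol{\Sigma}^1_1$ by rewriting membership as: there exist $D\in\N$ and $(y_n)_{n\in\N}\in X^{\N}$ such that $D^{-1}d_M(x_n,x_m)\le d_X(y_n,y_m)\le D\, d_M(x_n,x_m)$ for every $n,m\in\N$. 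Projecting out the existentials over $D\in\N$ and over $(y_n)$ in the Polish space $X^\N$ yields the required analyticity.

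The heart of the proof is the construction of an ordinal index $\iota$ on separable metric spaces tailored to the Schreier hierarchy: $\iota(M)$ is the least ordinal $\alpha$ at which a suitable iterated derivative on the collection of finite, scaled, sum-bounded subsets of $M$ vanishes. This index must be chosen so that (a) it is monotone under bi-Lipschitz embeddings, up to a universal distortion shift, (b) $\iota(\mathrm{S}_\alpha(\Q))\ge\alpha$ for every countable $\alpha$ by the very design of the Schreier metric spaces, and (c) it is a $\boldsymbol{\Pi}^1_1$-rank on the coanalytic class $\mathcal{C}:=\{M:\iota(M)<\omega_1\}$, with any space outside $\mathcal{C}$ carrying a bi-Lipschitz copy of $c_0$. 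Granting this, the Kunen--Martin boundedness theorem implies that if $\mathcal{L}(X)\subseteq\mathcal{C}$ then $\iota$ would be bounded on $\mathcal{L}(X)$ by some $\alpha_0<\omega_1$, contradicting $(\mathrm{S}_{\alpha_0+1}(\Q),d_\infty)\in\mathcal{L}(X)$; hence $\mathcal{L}(X)$ contains a space that houses a bi-Lipschitz copy of $c_0$, and Aharoni's theorem concludes the proof.

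The main obstacle is clause (c) above: a single ordinal index on arbitrary separable metric spaces must simultaneously be bi-Lipschitz-monotone, realize the claimed ordinal values on $(\mathrm{S}_\alpha(\Q),d_\infty)$ with quantitative control over the accrued distortion, and remain descriptively well behaved as a $\boldsymbol{\Pi}^1_1$-rank. The Schreier metric spaces $(\mathrm{S}_\alpha(\Q),d_\infty)$ are precisely the devices engineered to make these requirements compatible: their $d_\infty$-geometry inherited from $c_0$ carries enough of the combinatorics of the Schreier families $\mathcal{S}_\alpha$ to propagate ordinal information faithfully through bi-Lipschitz maps, which is the feature the descriptive set-theoretic boundedness argument exploits.
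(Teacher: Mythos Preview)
Your plan has a genuine gap at clause (c), which you yourself flag as the ``main obstacle'' but leave unresolved. Constructing an ordinal index $\iota$ on separable metric spaces with the property that $\iota(M)\ge\omega_1$ forces a bi-Lipschitz copy of $c_0$ inside $M$ is not a technicality one can defer --- it is the entire content of the theorem. Your description of $\iota$ as ``the least ordinal at which a suitable iterated derivative on the collection of finite, scaled, sum-bounded subsets of $M$ vanishes'' is too vague to be checked: one must exhibit a concrete combinatorial structure whose ill-foundedness \emph{manifestly produces} a bi-Lipschitz embedding of a dense subset of $c_0$, and verify that it is closed (so that the boundedness argument applies). Until that object is built, nothing in the Kunen--Martin machinery can fire.

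The paper's proof is more direct and avoids both the Effros--Borel coding and $\Pi^1_1$-rank theory altogether: it works on the single target space $M$. The new device is a \emph{$\mathbb{Q}$-vine} $\mathcal{V}(M,\mathbb{Q},C)$, a tree-like object whose nodes are bunches $(x_f)_{f\in[\mathbb{Q},G]}\subset M$ indexed by finitely supported $\mathbb{Q}$-valued functions (rather than finite sequences), satisfying $C^{-1}\|f-g\|_\infty\le d(x_f,x_g)\le C\|f-g\|_\infty$. One proves the vine analogue of Bourgain's lemma --- a closed well-founded vine on a Polish space has countable order --- and observes that an infinite branch in $\mathcal{V}(M,\mathbb{Q},C)$ \emph{is} a $C$-bi-Lipschitz embedding of $c_{00}^{\mathbb{Q}}$ (hence of $c_0$, by completeness). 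A one-line pigeonhole on distortions (uncountably many $\alpha$, countably many integer bounds $C$) then yields a single $C$ and an uncountable $A\subset\omega_1$ with $o(\mathcal{V}(M,\mathbb{Q},C))\ge\omega^\alpha+1$ for all $\alpha\in A$, forcing the vine to be ill-founded. Your detour through the space of all metric spaces is unnecessary: the index is applied directly to $M$.
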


Theorem \ref{T:A} should be thought of as a purely Lipschitz analogue of the linear universality result that states that if a Banach space $X$ is isomorphically universal for the class of separable reflexive asymptotic-$\co$ Banach spaces then  $X$ contains an isomorphic copy of $\co$. This linear universality can be found in \cite{OdellSchlumprechtZsak2007}, as it is explained at the end of section \ref{S:1}. Similarly to the linear setting we use an ordinal index \`a la Bourgain.

In the context of coarse universality, technical difficulties arise and we need some additional set-theoretic axioms (Martin's Axiom and the negation of the Continuum Hypothesis) to prove a coarse analogue of Theorem \ref{T:A}. Note that here we only consider integer-valued Schreier metric spaces.

\begin{thmAlfa}\label{T:B}(MA+$\neg$CH)
If a separable metric space contains coarse copies of $(\mathrm{S}_\alpha(\Z),d_\infty)$ for every countable ordinal $\alpha$, then it is coarsely universal for the class of all separable metric spaces.
\end{thmAlfa}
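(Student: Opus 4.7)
The plan is to establish that $X$ coarsely contains a space known to be coarsely universal for the class of all separable metric spaces; the natural target is $c_0$, since by Aharoni's theorem every separable metric space bi-Lipschitzly embeds into $c_0$, and post-composing such an embedding with a coarse embedding of $c_0$ into $X$ yields a coarse embedding of any separable metric space into $X$. The key observation is that $\bigcup_{\alpha<\omega_1}\mathrm{S}_\alpha$ exhausts the finite subsets of $\N$, so the hypothesized family $(\mathrm{S}_\alpha(\Z),d_\infty)_{\alpha<\omega_1}$ is a transfinite filtration whose union is precisely the natural integer-valued skeleton of $c_0$. Assembling the given coarse embeddings $f_\alpha$ into a coarse embedding of this union, and then extending coarsely to $c_0$, will be the core of the argument.

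The first step is to uniformize the moduli. For each $\alpha<\omega_1$, let $f_\alpha\colon(\mathrm{S}_\alpha(\Z),d_\infty)\to X$ be a coarse embedding with compression $\rho_\alpha$ and expansion $\omega_\alpha$, which on the integer-metric domain may be taken as functions $\N\to\N$. Under MA$+\neg$CH one has $\aleph_1<\mathfrak{c}$ and the bounding number satisfies $\mathfrak{b}=\mathfrak{c}$, so any $\aleph_1$-indexed family of functions $\N\to\N$ is dominated by a single one; dually, any $\aleph_1$-indexed family of non-decreasing unbounded functions admits a common non-decreasing unbounded lower bound. Applying these principles to $(\rho_\alpha)_{\alpha<\omega_1}$ and $(\omega_\alpha)_{\alpha<\omega_1}$ yields a cofinal uncountable $U\subseteq\omega_1$ and moduli $(\rho,\omega)$ for which the subfamily $(f_\alpha)_{\alpha\in U}$ is equi-coarse.

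With an uncountable equi-coarse family in hand, the remaining task is to glue the $f_\alpha$'s into a single coarse embedding of $\bigcup_{\alpha\in U}\mathrm{S}_\alpha(\Z)$ into $X$ and to extend it coarsely to $c_0$. The assembly proceeds by a Bourgain-style transfinite construction exploiting the nested Schreier hierarchy: at each stage $\alpha\in U$, a Ramsey/tree argument on the images in the separable space $X$ selects an embedding of $\mathrm{S}_\alpha(\Z)$ compatible (up to the bounded error dictated by the equi-coarse moduli) with the embeddings already chosen at earlier stages, while the uniform equi-coarseness keeps the global map a coarse embedding throughout the induction. Once a coarse embedding of $c_0$ into $X$ is produced, Aharoni's theorem and composition conclude the proof.

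The main obstacle will be exactly this coherent selection step. Coarse embeddings lack the rigidity of bi-Lipschitz ones, so two embeddings of overlapping Schreier metric spaces need not agree even approximately on the overlap, and forcing coherence along an uncountable transfinite induction requires a careful interplay between the separability of $X$ (providing countable dense targets to iterate over) and the boundedness principle supplied by MA$+\neg$CH (providing the uniform moduli that keep the iteration from degenerating). It is precisely this step that fails without the set-theoretic hypothesis, accounting for the restriction to MA$+\neg$CH in the statement and motivating the weaker ZFC variants that the paper discusses afterward.
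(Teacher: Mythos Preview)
Your first step (uniformizing the moduli under MA$+\neg$CH) and your last step (Aharoni plus composition) match the paper. The gap is in the middle step, which you yourself flag as ``the main obstacle'' and then do not resolve. The description you give --- a transfinite induction over $\alpha\in U$ in which at each stage a Ramsey argument produces an embedding of $\mathrm{S}_\alpha(\Z)$ that is ``compatible'' with the embeddings chosen at earlier stages --- is not a proof sketch but a hope. There is no mechanism that forces two coarse embeddings $f_\alpha$ and $f_\beta$ to be close on overlapping pieces of the Schreier hierarchy, and no separability/Ramsey argument on the range alone will manufacture such coherence across an $\omega_1$-length induction.

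The paper does not attempt to glue the $f_\alpha$ at all. Instead it introduces an abstract closed tree-like structure (a $\Z$-vine $\mathcal{V}(M,\Z,\rho,\omega)$) whose nodes are finite ``partial coarse embeddings'' of bunches $[\Z,G]$ into $M$ satisfying the fixed moduli. Each equi-coarse embedding $f_\alpha$ contributes a sub-vine of ordinal index at least $o(\mathrm{S}_\alpha)=\omega^\alpha+1$, so the vine has index $\ge\omega_1$. A Bourgain-type boundedness result for closed vines over Polish spaces (the analogue of ``well-founded closed tree has countable index'') then forces the vine to be ill-founded, and an infinite branch is, by definition, a coarse embedding of the integer grid of $c_0$ into $M$. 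The point is that ordinal-index machinery replaces any need for stage-by-stage compatibility: one never reconciles the given $f_\alpha$'s, one only uses them to certify that the index is uncountable, and the embedding of $c_0$ is extracted from the ill-founded vine, not from the $f_\alpha$'s themselves. This is the idea your proposal is missing.
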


We end the first part with several results which  have statements which are somewhat weaker than Theorem \ref{T:B}, but can be shown without any further axioms.  In particular, we show the following.

\begin{thmAlfa}\label{T:E}
If a separable metric space  $(M,d)$ contains coarse copies of $(\mathrm{S}_\alpha(\Z),d_\infty)$ for every countable ordinal $\alpha$,
then the class of all separable bounded metric spaces embeds equi-coarsely into $(M,d)$.
\end{thmAlfa}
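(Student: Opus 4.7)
The strategy for Theorem \ref{T:E} is to establish a universal approximately-isometric embedding of every bounded separable metric space into a Schreier space, and then invoke the hypothesis.

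The key claim I would prove is the following: for every separable bounded metric space $(Y,d_Y)$ with $\diam(Y)\le D$, there exist a countable ordinal $\alpha=\alpha(Y)$ and a map $\phi_Y\colon Y\to (\mathrm{S}_\alpha(\Z),d_\infty)$ satisfying the approximate-isometry bound
$$
\bigl|\,d_Y(y,y')-\|\phi_Y(y)-\phi_Y(y')\|_\infty\,\bigr|\le C,
$$
for an absolute constant $C$ independent of $Y$. The construction proceeds in four steps: (i) apply an Aharoni-type bi-Lipschitz embedding $\psi_Y\colon Y\hookrightarrow c_0$ with universal distortion; (ii) round coordinates of $\psi_Y(y)$ to integers to obtain $\Z$-valued approximations; (iii) truncate each resulting sequence to a finite support, which is possible since $\psi_Y(y)\in c_0$; (iv) shift the truncated supports so that they land in the Schreier family $\mathcal{S}_{\alpha(Y)}$, exploiting the elementary observation that any finite $F\subset\N$ with $\min F\ge|F|$ lies in $\mathcal{S}_1$. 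A countable dense subset of $Y$ produces at most countably many distinct supports, so the ordinal $\alpha(Y)$ is countable because a countable supremum of countable ordinals remains countable.

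Composing with the hypothesis, each $\mathrm{S}_{\alpha(Y)}(\Z)$ coarsely embeds into $M$ via some $\iota_{\alpha(Y)}$, and the composition $f_Y:=\iota_{\alpha(Y)}\circ\phi_Y\colon Y\to M$ is a coarse embedding whose moduli differ from those of $\iota_{\alpha(Y)}$ only by the additive constant $C$.

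The main obstacle is to upgrade from individual coarse embeddings to \emph{equi}-coarseness across the (uncountable) class of bounded separable metric spaces, since the moduli of $\iota_{\alpha(Y)}$ depend a priori on $\alpha(Y)$. The plan for resolving this is to show that one can take $\alpha(Y)=\alpha^*$ for a single countable ordinal $\alpha^*$ covering all $Y$ at once: by routing every bounded separable metric space through a canonical universal object (for instance, a countable family of bounded Urysohn-type spaces indexed by integer diameter) and applying the shift-and-truncation procedure uniformly in $Y$, the truncated supports can all be arranged to lie in a single $\mathcal{S}_{\alpha^*}$ via diameter-dependent shifts. The single coarse embedding $\iota_{\alpha^*}\colon\mathrm{S}_{\alpha^*}(\Z)\to M$ then produces the desired equi-coarse family $\{f_Y\}$, whose uniform moduli are controlled by those of $\iota_{\alpha^*}$ plus the additive constant $C$.
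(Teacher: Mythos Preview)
Your plan has a genuine gap at step (iv), and the ``single $\alpha^*$'' reduction at the end is in fact false.

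\textbf{Step (iv) does not work.} After Aharoni, rounding, and truncation you obtain a countable family of vectors $x_y\in c_{00}(\Z)$ whose supports $F_y$ are finite but of \emph{unbounded} cardinality as $y$ ranges over a dense set. To preserve the $\ell_\infty$ distances you must apply a single increasing injection $\sigma:\N\to\N$ to all coordinates simultaneously. But then every $\sigma(F_y)$ has minimum $\ge \sigma(1)$, and for any countable $\alpha$ there is a bound $N=N(\alpha,\sigma(1))$ on the cardinality of sets in $\mathcal S_\alpha$ with minimum $\sigma(1)$. Since $|F_y|$ is unbounded, the shifted supports cannot all lie in any $\mathcal S_\alpha$. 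The sentence ``a countable supremum of countable ordinals remains countable'' is a red herring: the Schreier families are not nested, and taking a supremum of ordinals does not produce a family containing the smaller ones uniformly.

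\textbf{The single $\alpha^*$ plan is impossible.} If every bounded separable metric space embedded into one $(\mathrm S_{\alpha^*}(\Z),d_\infty)$ with uniform additive error $C$, then in particular Kalton's interlacing graphs $([\N]^k,d_{\mathrm I})_k$ would equi-coarsely embed into it. But $\mathrm S_{\alpha^*}(\Z)$ sits $4$-bi-Lipschitzly inside the reflexive space $T^*_{\alpha^*}$, which has property~$\mathcal Q$; hence the interlacing graphs do not equi-coarsely embed. This contradiction shows no such $\alpha^*$ exists, and more generally that your approach cannot use the hypothesis for only one (or boundedly many) ordinals.

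\textbf{What the paper does.} The paper never tries to factor bounded spaces through a Schreier space. Instead it works directly in $M$: a pigeonhole stabilization (Lemma~\ref{partially equi-coarse}) extracts a single pair of moduli $(\rho,\omega)$ and nested uncountable sets $C_k\subset\omega_1$ so that for $\alpha\in C_k$ the given embedding $f_\alpha:\mathrm S_\alpha(\Z)\to M$ is a $(\rho,\omega)$-embedding on subsets of diameter $\le k$. For each radius $N$ this forces the finite-alphabet vine $\mathcal V(M,\mathbb I_N,\rho,\omega)$ to have uncountable index (because cofinally many $\alpha$ contribute), and the vine analogue of Bourgain's theorem (Proposition~\ref{well founded if countable index}) then yields an infinite branch, i.e.\ a $(\rho,\omega)$-embedding of the integer $N$-ball of $c_0$ into $M$. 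The entire transfinite hierarchy is genuinely needed.
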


With the help of a deep result of Dodos \cite{Dodos2009}, we prove  Theorem \ref{T:C} below. Note that the assumption is formally stronger than that of Theorem \ref{T:B} or Theorem \ref{T:E}.
\begin{thmAlfa}\label{T:C}
If a separable metric space is coarsely universal for the class of all reflexive asymptotic-$\co$ Banach spaces then it is coarsely universal for the class of all separable metric spaces.
\end{thmAlfa}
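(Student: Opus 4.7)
The plan is to combine the hypothesis with Dodos's strong boundedness theorem from \cite{Dodos2009} in order to verify, in a sufficiently uniform form, the criterion underlying Theorems~\ref{T:B} and \ref{T:E}, so as to reach all separable (including unbounded) metric spaces without invoking MA+$\neg$CH.

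I would first exhibit, for each countable ordinal $\alpha$, a separable reflexive asymptotic-$\co$ Banach space $X_\alpha$ into which $(\mathrm{S}_\alpha(\Z),d_\infty)$ embeds bi-Lipschitzly. A natural candidate is a mixed-Tsirelson or dual Schreier space built on the Schreier family $\cS_\alpha$: its canonical $1$-unconditional basis $(e_n)$ realizes the combinatorics of $\cS_\alpha$ with the $c_0$-saturated asymptotic behaviour, and the vectors $\sum_{n\in F}\pm e_n$ for $F\in\cS_\alpha$ form a metric subset isometric to $(\mathrm{S}_\alpha(\Z),d_\infty)$. The hypothesis then gives a coarse embedding of each $X_\alpha$, and hence of each $(\mathrm{S}_\alpha(\Z),d_\infty)$, into $M$; but a priori the moduli of these coarse embeddings vary with $\alpha$, which is why Theorem~\ref{T:E} alone only yields equi-coarse universality for \emph{bounded} separable metric spaces.

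The decisive step will be to upgrade this pointwise family to an equi-coarse family, and this is where Dodos's theorem enters. Applied to a suitable analytic parametrization of the class of reflexive asymptotic-$\co$ Banach spaces, Dodos's strong boundedness theorem produces, \emph{inside the class itself}, a separable reflexive asymptotic-$\co$ space $Y$ that isomorphically contains every relevant $X_\alpha$. Because $Y$ is reflexive asymptotic-$\co$, the hypothesis of Theorem~\ref{T:C} yields a \emph{single} coarse embedding $f\colon Y\to M$. Composing $f$ with the bi-Lipschitz inclusions $(\mathrm{S}_\alpha(\Z),d_\infty)\hookrightarrow X_\alpha\hookrightarrow Y$ then furnishes equi-coarse embeddings of the whole family $\{(\mathrm{S}_\alpha(\Z),d_\infty)\}_{\alpha<\omega_1}$ into $M$ with a common modulus.

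This uniform modulus is precisely what was missing in the statement of Theorem~\ref{T:E} and what forced MA+$\neg$CH in the proof of Theorem~\ref{T:B}. With the equi-coarseness in hand, one can run the unbounded-case argument of Theorem~\ref{T:B} directly: unbounded separable metric spaces are handled by covering them with bounded slices, coarsely embedding each slice through the common-modulus Schreier scaffolding, and stitching the pieces together using the controlled moduli. The hard part will be the correct invocation of Dodos's result: one must identify an analytic parametrization of reflexive asymptotic-$\co$ spaces covering all countable ordinal levels of Schreier complexity (or iterate the construction across $\omega_1$ while preserving the equi-coarseness), and verify that the universal space produced by Dodos's machinery remains reflexive asymptotic-$\co$ rather than merely a space with separable dual, so that the hypothesis of Theorem~\ref{T:C} still applies to it.
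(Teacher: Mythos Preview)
Your proposed strategy has a genuine gap at exactly the point you yourself flag as ``the hard part'': there is \emph{no} separable reflexive asymptotic-$\co$ space $Y$ that contains isomorphic copies of $X_\alpha$ (or of $T^*_\alpha$) for \emph{all} $\alpha<\omega_1$. Indeed, any separable Banach space containing every $T^*_\alpha$ has Bourgain $\co$-index $\geq\omega_1$, and therefore contains an isomorphic copy of $\co$; such a space cannot be reflexive. So Dodos's machinery cannot possibly return a universal object lying inside the class of reflexive asymptotic-$\co$ spaces, and hence you cannot invoke the hypothesis of Theorem~\ref{T:C} on that $Y$ to get a single coarse embedding into $M$. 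The equi-coarseness you are trying to manufacture this way is unattainable by this route.

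The paper's argument uses Dodos's theorem in a quite different way. One does not try to amalgamate the $T^*_\alpha$'s inside the good class; instead one works with the analytic class $\mathsf{CE}_M=\{Y\in\mathsf{SB}: Y\text{ coarsely embeds into }M\}$ and argues by contradiction. If $\co$ did not coarsely embed into $M$, then (since a linear embedding is a coarse embedding) no $Y\in\mathsf{CE}_M$ contains $\co$, i.e.\ $\mathsf{CE}_M\subset\mathsf{NC}_{\co}$. Dodos's result (\cite[Theorem~7]{Dodos2009}) says $\mathsf{NC}_{\co}$ is strongly bounded, so there is a single $X\in\mathsf{NC}_{\co}$ containing every space in $\mathsf{CE}_M$. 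But $\mathsf{CE}_M$ contains all reflexive asymptotic-$\co$ spaces by hypothesis, in particular every $T^*_\alpha$, forcing $I_{\co}(X)\geq\omega_1$ and hence $\co\hookrightarrow X$ --- contradicting $X\in\mathsf{NC}_{\co}$. Thus $\co$ coarsely embeds into $M$, and Aharoni's theorem finishes the job immediately; no stitching of bounded pieces or appeal to the mechanism of Theorems~\ref{T:B}/\ref{T:E} is needed.
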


The second part of the article discusses some universality properties of the sequence of interlacing graphs $([\N]^k,d_{\mathrm{I}})_k$ and their applications to universality problems. The geometry of these graphs is intimately connected to the geometry of $\co$ via the summing norm, and we prove the following universality property.

\begin{thmAlfa}\label{T:D} For every finite metric space $X$ and every $\varepsilon>0$, there exists $k:=k(X,\varepsilon)\in\N$ such that $X$ admits a bi-Lipschitz embedding into $([\N]^k,d_{\mathrm{I}})$ with distortion at most $1+\varepsilon$.
\end{thmAlfa}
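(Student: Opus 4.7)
My plan is to exploit the well-known fact that the interlacing distance on $[\N]^k$ is essentially a summing-norm distance on characteristic functions, and then to encode a Fr\'echet image of $X$ as characteristic functions of $k$-subsets in a careful block-wise fashion.

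First I would establish the combinatorial identity $d_{\mathrm{I}}(\bar n, \bar m) = \|\chi_{\bar n} - \chi_{\bar m}\|_{\mathrm{sum}}$, where $\|f\|_{\mathrm{sum}} := \sup_{p \leq q}|\sum_{i=p}^q f(i)|$ is the summing norm on finitely supported integer sequences. Writing $F_{\bar n}(a) := |\bar n \cap [1,a]|$ for the rank function, this amounts to
\[
d_{\mathrm{I}}(\bar n,\bar m) \;=\; \sup_a (F_{\bar n}-F_{\bar m})(a) \;-\; \inf_a (F_{\bar n}-F_{\bar m})(a).
\]
The lower bound comes from the observation that adjacent (interlacing) subsets satisfy $\|F_{\bar n}-F_{\bar m}\|_\infty\leq 1$ with $F_{\bar n}-F_{\bar m}$ of constant sign, whence $\|\cdot\|_{\mathrm{sum}}$ can change by at most $1$ along any interlacing step; the upper bound follows from an explicit path argument removing one unit of excursion of $F_{\bar n}-F_{\bar m}$ at a time.

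Next I would apply Fr\'echet--Kuratowski to embed $X=\{x_1,\ldots,x_n\}$ isometrically into $\ell_\infty^n$ via $x_i \mapsto v_i := (d(x_i,x_j))_{j=1}^n$, and then rescale by a large integer $L$ and round coordinates to obtain $\widetilde v_i \in \Z_{\geq 0}^n$ with all coordinates bounded by some $M\in\N$, paying a factor at most $1+\varepsilon/2$ on the pairwise distances. It suffices then to embed the finite set $\{\widetilde v_1,\ldots,\widetilde v_n\}\subset(\Z^n,\|\cdot\|_\infty)$ into $([\N]^k,d_{\mathrm{I}})$ with distortion at most $1+\varepsilon/2$.

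The core construction is block-wise. Partition an initial segment of $\N$ into $n$ consecutive blocks $B_1,\ldots,B_n$ each of length $2M$, and in block $B_l$ assign to $\widetilde v_i$ a pattern of exactly $M$ ones whose cumulative sum over $B_l$ starts and ends at $M$ but attains a controlled excursion of magnitude $\widetilde v_i^{(l)}$ in the middle; this guarantees $|A_i|=nM=:k$ and that on each block $B_l$ the running difference $F_{A_i}-F_{A_j}$ peaks at $|\widetilde v_i^{(l)}-\widetilde v_j^{(l)}|$ before returning to zero, while block totals cancel. The main obstacle is that a direct implementation yields distortion $2$ rather than $1+\varepsilon$, because the summing norm adds the maximum positive excursion in some block with the maximum negative excursion in another. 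I would overcome this by replicating each coordinate's block many times in an interleaved order adapted to the finitely many pairs $(i,j)$, exploiting the fact that in the Fr\'echet embedding there is, for every $i\neq j$, a distinguished coordinate (namely $l=i$ or $l=j$) at which $|v_i^{(l)}-v_j^{(l)}|$ already equals $d(x_i,x_j)$ and dominates all other coordinates by the triangle inequality. With sufficiently many interleaved copies the positive and negative excursions for each fixed pair overlap rather than compound, so after taking $k$ large as a function of $X$ and $\varepsilon$ the distortion can be driven down to $1+\varepsilon$, and composing with the Fr\'echet--rounding map of Steps 2 completes the bi-Lipschitz embedding.
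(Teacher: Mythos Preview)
Your first two steps coincide with the paper's: the identity $d_{\mathrm{I}}(\bar n,\bar m)=\|\chi_{\bar n}-\chi_{\bar m}\|_{\mathrm{sum}}$ is exactly what the paper proves, and both arguments then pass through a Fr\'echet embedding followed by a scaling/rounding. The paper's third step is a telescoping variant: it maps $x\mapsto\tfrac12\sum_i\bigl(d(x,x_i)-d(x,x_{i+1})\bigr)s_i$ directly into the span of the summing basis, verifies by a two-term telescoping computation that this is already \emph{isometric} for $\|\cdot\|_{\mathrm{sum}}$, translates to non-negative integer coefficients, and replaces each coefficient $c$ by a run of $c$ consecutive ones.

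Your block encoding is a legitimate alternative to this telescoping trick, but you have mis-analyzed it: with the Fr\'echet coordinates it already yields distortion $1$ (up to rounding), not $2$. You correctly compute that the summing norm of $\chi_{A_i}-\chi_{A_j}$ equals $\max_l(\delta_l)^+ + \max_l(\delta_l)^-$ with $\delta_l=\tilde v_i^{(l)}-\tilde v_j^{(l)}$, and that for \emph{generic} integer points in $\ell_\infty^n$ this lies anywhere in $[\|\tilde v_i-\tilde v_j\|_\infty,\,2\|\tilde v_i-\tilde v_j\|_\infty]$. But the Fr\'echet image is not generic: since $\delta_j=\tilde v_i^{(j)}-0$ and $\delta_i=0-\tilde v_j^{(i)}$ are both of magnitude $Ld(x_i,x_j)+O(1)$ and of opposite sign, while $|\delta_l|\le Ld(x_i,x_j)+O(1)$ for every $l$ by the triangle inequality, \emph{both} $\max^+$ and $\max^-$ are attained and equal $Ld(x_i,x_j)+O(1)$. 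Hence the summing norm is $2Ld(x_i,x_j)+O(1)$ for every pair, the factor $2L$ is a uniform scale, and the distortion is $1+O\bigl(1/(L\cdot\mathrm{sep}(X))\bigr)$. You actually record this very feature of the Fr\'echet coordinates in your last paragraph, but use it only to motivate the interleaving fix rather than to finish the argument outright.

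The interleaving scheme is therefore unnecessary; as written it is also too vague to evaluate, and for a \emph{general} $\ell_\infty^n$ configuration (where the distortion-$2$ obstruction is real) I do not see how any single fixed ordering of replicated blocks could make positive and negative excursions ``overlap rather than compound'' simultaneously for all pairs $(i,j)$.
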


Note that it follows from this almost isometric universality property of the interlacing graphs and the work of Eskenazis, Mendel and Naor \cite{EskenazisMendelNaor2019} that the sequence of interlacing graphs $([\N]^k,d_{\mathrm{I}})_k$ does not equi-coarsely embed into any Alexandrov space of nonpositive curvature.

Then, we discuss the connection between metric universality, the geometry of the interlacing graphs, and a nonlinear version of Johnson-Odell elasticity.

In \cite{Kalton2007}, Kalton showed that a separable Banach $X$ that is coarsely universal for all separable metric spaces cannot have all its iterated duals separable. The argument is based on the existence of uncountably many well separated copies of the interlacing graphs in $c_0$. We conclude the paper by showing  that it can be generalized to prove the following.

\begin{thmAlfa}\label{T:F} Let  $X$ be a separable Banach space with non separable bidual $X^{**}$ and such that no spreading model generated by  a normalized  weakly null sequence in $X$  is equivalent to the $\ell_1$-unit vector basis. Assume that $X$ coarsely embeds into a Banach space $Y$. Then there exists $k\in \N$ such that $Y^{(2k)}$ is non separable.
\end{thmAlfa}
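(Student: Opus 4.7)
The strategy is to generalize Kalton's argument for $X = c_0$: construct inside $X$ an equi-coarse embedding of the interlacing graphs $\{([\N]^k, d_{\mathrm{I}})\}_{k \in \N}$ in uncountably many different ways, whose iterated cluster points remain pairwise well-separated in iterated biduals, then push these forward through the coarse embedding into $Y$ and conclude that some $Y^{(2k)}$ must be non-separable.

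First, the spreading-model hypothesis is used to show that for any normalized weakly null sequence $(x_n)$ in $X$, after passing to a subsequence the maps $\phi_k(\bar n) := \sum_{i=1}^k x_{n_i}$, for $\bar n = (n_1 < \cdots < n_k) \in [\N]^k$, form an equi-coarse embedding of the sequence $\{([\N]^k, d_{\mathrm{I}})\}_{k \in \N}$ into $X$. The interlacing distance on $[\N]^k$ is bi-Lipschitz-equivalent to the norm of the corresponding summing vector in $c_0$, and the absence of an $\ell_1$-spreading model for $(x_n)$ ensures the partial sums $\sum_{i=1}^k x_{n_i}$ remain uniformly bounded over spread-out $\bar n$, giving exactly the summing-type behaviour needed to mirror Kalton's $c_0$ construction.

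Second, non-separability of $X^{**}$ yields an uncountable $\delta$-separated family $\{x^{**}_\alpha\}_{\alpha \in \Lambda}$ in the unit ball of $X^{**}$. By Goldstine's theorem and a transfinite diagonal refinement combined with the previous step, one extracts an uncountable family of sequences $(x^\alpha_n)_n$ in $X$, each producing an equi-coarse embedding $\phi^\alpha_k$ of $([\N]^k, d_{\mathrm{I}})$ into $X$, such that appropriate iterated cluster points of $\phi^\alpha_k(\bar n)$ (taken successively in each coordinate) remain pairwise $\delta$-separated across $\alpha \in \Lambda$.

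Third, composing with the coarse embedding $f : X \to Y$ and invoking Kalton's iterated cluster-point construction inside $Y$, the $k$-fold iterated cluster points of $f(\phi^\alpha_k(\bar n))$ yield an uncountable pairwise-separated family in $Y^{(2k)}$, forcing this iterated bidual to be non-separable. The principal obstacle lies in the second step: in Kalton's $c_0$ case the characteristic functions $\{\mathbf{1}_A\}$ of infinite $A \subseteq \N$ supply a canonical uncountable well-separated family inside $c_0^{**} = \ell_\infty$ with transparent cluster-point behaviour, but for a general $X$ satisfying the theorem's hypotheses, the analogous family must be built abstractly from the non-separability of $X^{**}$ while maintaining compatibility with the spreading-model structure from the first step. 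Balancing well-separation in iterated biduals against coarse-structure compatibility is the technical heart of the argument.
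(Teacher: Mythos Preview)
Your outline has the right architecture but several steps are misstated, and the third contains a genuine gap. First, the unnormalized maps $\phi_k(\bar n)=\sum_{i=1}^k x_{n_i}$ are not equi-coarse: their Lipschitz constant on $([\N]^k,d_{\mathrm I})$ is of order $\lambda_k:=\|\sum_{j\le k}e_j\|$ in the spreading model, and the no-$\ell_1$-spreading-model hypothesis gives only $\lambda_k=o(k)$, not $\lambda_k=O(1)$. The paper normalizes, setting $f_\alpha^{(k)}(\bar n)=\frac{2}{3\lambda_k^\alpha}\sum_i x_{\alpha,n_i}$, which is genuinely $1$-Lipschitz; the crucial quantity is then $k/\lambda_k^\alpha\to\infty$. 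There is also a mismatch between your Steps 1 and 2: the sequences approximating $x^{**}_\alpha$ (obtained via Odell--Rosenthal, which requires $\ell_1\not\subset X$, a hypothesis the paper adds in its restatement) are weakly Cauchy, not weakly null, so one must pass to the difference sequence $(x_{\alpha,2j}-x_{\alpha,2j-1})_j$ before invoking the spreading-model assumption.

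The decisive gap is your Step 3. You propose to take iterated weak$^*$ cluster points in iterated biduals of $X$ and then push them forward through the coarse embedding $g$ to obtain a separated family in $Y^{(2k)}$. Coarse maps do not extend to biduals and do not respect weak$^*$ cluster points, so this transfer is unavailable. The paper's argument is organized differently: the separation is established \emph{in $X$ itself}. From the construction of $(x^{**}_\alpha)_{\alpha<\omega_1}$ one produces, via local reflexivity, functionals $x^*_{\alpha,\beta}\in S_{X^*}$ with $x^{**}_\alpha(x^*_{\alpha,\beta})=0$ and $x^{**}_\beta(x^*_{\alpha,\beta})>3/4$, and these witness
\[
\sup_{\bar n\in[\M]^k}\big\|f^{(k)}_\alpha(\bar n)-f^{(k)}_\beta(\bar n)\big\|_X\ \ge\ \frac{k}{2\lambda_k^\beta}
\]
for every $\M\in[\N]^\omega$. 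The compression modulus $\rho_g$ now transfers this lower bound to $Y$. One then applies Kalton's concentration theorem (Theorem~\ref{T:2}) as a black box on the $Y$ side: if $Y^{(2k)}$ were separable for every $k$, any uncountable family of $1$-Lipschitz maps $[\N]^k\to Y$ would contain a pair $3$-close on some $[\M]^k$, forcing $\rho_g\big(k/(2\lambda_k^\beta)\big)\le 3$ for all $k$ and co-countably many $\beta$, and hence for some fixed $\alpha$ and all $k$; this contradicts $\rho_g(t)\to\infty$ together with $k/\lambda_k^\alpha\to\infty$. All the iterated-bidual work is encapsulated inside Kalton's theorem for $Y$; no cluster points of the $f_\alpha^{(k)}$ in duals of $X$ are taken.
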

In connection with this last result, it is important to note that $\ell_1$ is known to coarsely embed into $\ell_2$.

\section{Preliminaries}\label{S:2}

\subsection{Coarse and Lipschitz geometry}
If $X$ and $Y$ are two metric spaces, the \emph{$Y$-distortion} of $X$, denoted $c_Y(X)$, is defined as the infimum of those $D\in[1,\infty)$ such that there exist $s\in(0,\infty)$ and a map $f\colon X\to Y$ so that for all $x,y\in X$
  \begin{equation}\label{E:distortion}
    s\cdot d_{X}(x,y)\leq d_{Y}\big(f(x),f(y)\big)\leq
    s\cdot D\cdot d_{X}(x,y).
  \end{equation}

When \eqref{E:distortion} holds we say that $X$ bi-Lipschitzly embeds into $Y$ with distortion $D$. We introduce some convenient terminology and notation that will allow us to treat all at once various embedding notions.

\begin{defin}
Let $X$ and $Y$ be metric spaces. Let $\rho,\omega\colon [0,\infty)\to[0,\infty)$. We say that $X$ $(\rho,\omega)$-embeds into $Y$ if there exists $f\colon X\to Y$ such that for all $x,y\in X$ we have
\begin{equation}\rho(d_X(x,y))\le d_Y(f(x),f(y))\le \omega(d_X(x,y)).
\end{equation}

If $\{X_i\}_{i\in I}$ is a collection of metric spaces. We say that $\{X_i\}_{i\in I}$ $(\rho,\omega)$-embeds into $Y$ if for every $i\in I$, $X_i$ $(\rho,\omega)$-embeds into $Y$.
\end{defin}

We will say that $\{X_i\}_{i\in I}$ equi-coarsely embeds into $Y$ if there exist non-decreasing functions $\rho,\omega\colon [0,\infty)\to[0,\infty)$ such that $\lim_{t\to\infty}\rho(t)=\infty$ and $\{X_i\}_{i\in I}$ $(\rho,\omega)$-embeds into $Y$. We  say that $\{X_i\}_{i\in I}$ equi-bi-Lipschiztly embeds into $Y$ if  $\{X_i\}_{i\in I}$ $(\rho,\omega)$-embeds into $Y$, where $\rho$ and $\omega$ are increasing and linear on $[0,\infty)$.

 Note that equi-bi-Lipschitz embeddability is a stronger condition than merely assuming that $\sup_{i\in I}c_Y(X_i)<\infty$ since it does not allow for arbitrarily large or arbitrarily small scaling factors in \eqref{E:distortion}. However if $Y$ is a Banach space rescaling is possible, and the two notions coincide.

Aharoni's embedding theorem \cite{Aharoni1974} states that there exists a universal constant $K\in[1,\infty)$ such that every separable metric space bi-Lipschitzly embeds into $\co$ with distortion at most $K$. The optimal distortion in Aharoni's embedding theorem is $K=2$ as shown in \cite{KaltonLancien2008}. A consequence of Aharoni's embedding theorem, which will be used repeatedly, is that a metric space is Lipschitzly (resp. coarsely) universal for the class of separable metric spaces if and only if it contains a bi-Lipschitz (resp. coarse) copy of $\co$.
\subsection{Trees, derivations, and Bourgain's index theory}

A tree $T$ over a set $X$ is a collection of finite sequences $(x_1,\ldots,x_n)$ of elements of a set $X$ with the property that whenever $(x_1,\ldots,x_n)$ is in $T$ then $(x_1,\ldots,x_{n-1})$ is in $T$ as well. A tree is \emph{well-founded} if it has no infinite branch, \ie there is no sequence $(x_k)_{k=1}^\infty$ in $X$ such that for all $n\in\N$ $(x_1,x_2,\dots,x_n)\in T$. There is a classical ordinal derivation on trees which is defined transfinitely as follows:

\begin{enumerate}
\item[] $T^0=T$
\item[] $T^{\alpha+1}=\{(x_1,x_2,\dots,x_n)\colon (x_1,x_2,\dots,x_n,x_{n+1})\in T^{\alpha}\}$, for any ordinal $\alpha$
\item[] $T^{\beta}=\cap_{\alpha<\beta}T^{\alpha}$ for any limit ordinal $\beta$.
\end{enumerate}
We definite $o(T)$, the order of a tree $T$, to be the least ordinal number such that $T^{o(T)}=\emptyset$, and by convention we set $o(T)=\infty$ if such an ordinal does not exist. Note that if $T$ is well-founded then the derivation produces a strictly decreasing sequence of trees and thus $o(T)<\infty$.
For every ordinal $\alpha$ it is easy to construct a tree $T_\alpha$ such that $o(T_\alpha)=\alpha$.
In Section \ref{S:2} we will need to strengthen a crucial result about trees on Polish spaces, which are complete, separable and metrizable spaces. A tree $\mathsf{T}$ on a topological space $X$ is \emph{closed} if for every $n\in\N$, $\mathsf{T}\cap X^n$ is closed in $X^n$ equipped with the product topology. The following proposition, which follows from \cite[Theorem 31.1]{Kechris1995}, was observed  by Bourgain \cite[Proposition 3]{Bourgain1980}.

\begin{prop}\label{P:2.2}
If $T$ is a closed and well founded tree on a Polish space, then $o(T)<\omega_1$, where $\omega_1$ denotes the first uncountable ordinal.
\end{prop}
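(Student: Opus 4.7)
The plan is to reinterpret the iterated derivation $T \mapsto T^\alpha$ as the rank function of a well-founded strict partial order on $T$, and then invoke the Kunen--Martin boundedness principle for well-founded analytic relations on Polish spaces, which is essentially the content of Theorem~31.1 in \cite{Kechris1995}.

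First, I would view $T$ as a subset of the countable disjoint union $P := \bigsqcup_{n \geq 0} X^n$, which is itself Polish, and note that by hypothesis $T$ is closed in $P$. On $T$ define the strict relation $s \prec t$ to mean that $s$ properly extends $t$, i.e.\ $s = (x_1,\dots,x_N)$ and $t = (x_1,\dots,x_n)$ with $N > n$ and both lying in $T$. An infinite $\prec$-descending chain $t_1 \succ t_2 \succ \cdots$ in $T$ is exactly an infinite branch of $T$, so the hypothesis that $T$ is well-founded means precisely that $\prec$ is a well-founded strict partial order. A routine transfinite induction on $\alpha$, matching the recursive clauses in the two definitions at successor and limit stages, then shows that $t \in T^\alpha$ if and only if the $\prec$-rank $\mathrm{rk}_\prec(t)$ is at least $\alpha$. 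Consequently $o(T) = \mathrm{rk}_\prec(\emptyset) + 1$ whenever $T \neq \emptyset$.

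Next I would verify that $\prec$ is a closed subset of $P \times P$. For each pair $(N,n)$ with $N > n$, the trace $\prec \cap (X^N \times X^n)$ is the intersection of the closed set $(T \cap X^N) \times (T \cap X^n)$ with the closed diagonal condition $\{x_i = y_i : 1 \leq i \leq n\}$; for $N \leq n$ the trace is empty. Hence $\prec$ is closed, in particular analytic, as a subset of the Polish space $P \times P$. Applying the Kunen--Martin theorem to this well-founded analytic relation then yields $\mathrm{rk}_\prec(\emptyset) < \omega_1$, and therefore $o(T) < \omega_1$.

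All the descriptive-set-theoretic depth is packaged inside the cited boundedness theorem, whose proof is a $\Pi^1_1$-rank/reflection argument resting ultimately on the Lusin--Sierpi\'nski separation theorem; I would not reprove it. In the argument above, the only non-mechanical point is the identification of the tree derivation with the rank function of the associated well-founded order, but since both are defined by the same successor/limit recursion starting from the leaves, this identification is immediate from an induction on $\alpha$.
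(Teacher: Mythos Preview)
Your proof is correct and follows precisely the route the paper indicates: the paper does not prove Proposition~\ref{P:2.2} in the text but simply states that it follows from \cite[Theorem~31.1]{Kechris1995}, which is the Kunen--Martin boundedness theorem you invoke. Your identification of the tree derivation with the rank function of the proper-extension order, and the verification that this order is closed (hence analytic) on the Polish space $\bigsqcup_n X^n$, are exactly the details needed to turn that citation into a proof. One small point you glossed over: the derivation $T^{\alpha+1}$ looks only at one-step extensions while your order $\prec$ allows arbitrary proper extensions, but since any proper extension factors through a one-step extension of at least the same rank, the induction goes through.

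It is worth noting, by way of contrast, that when the paper later proves the analogous statement for vines (Proposition~\ref{well founded if countable index}), it does \emph{not} appeal to Kunen--Martin but instead follows Bourgain's original, more elementary argument: one uses second countability of the Polish space to find a countable ordinal $\eta$ at which the closures $\overline{(\mathcal{V}^{(\alpha)})_{(n)}}$ stabilize for every $n$, and then Lemma~\ref{no approximate maximal branches} shows that the stabilized object, if nonempty, must contain an infinite branch. That approach trades the descriptive-set-theoretic black box for a hands-on argument using only the Lindel\"of property and metric completeness; its payoff is that it is self-contained and adapts directly to the vine setting the paper actually needs, whereas your route packages all the work into a single citation.
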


In order to facilitate the reading of Section \ref{S:2}, we recall Bourgain's ordinal index ``measuring'' the presence of a given basic sequence in a Banach space. This idea was introduced in \cite{Bourgain1980} for a basis of $C[0,1]$, but can be (and has been extensively) applied for other basic sequences (see for instance Definitions 3.1 and 3.6 in \cite{AJO2005} or \cite{Odell2004}). In this article we will be mostly interested in the canonical basis of $c_0$.

Let $(e_i)_i$ be a normalized basic sequence, $X$ be a Banach space, and $K\ge 1$. Denote by $T(X,(e_i)_i, K)$ the set of finite sequences $(x_1,x_2,\dots,x_n)$ of elements in $X$ such that
\begin{equation}
\frac{1}{K}\|\sum_{k=1}^na_kx_k\|\le \|\sum_{k=1}^na_ke_k\|\le K\|\sum_{k=1}^na_kx_k\|.
\end{equation}
It is clear that $T(X,(e_i)_i, K)$ is a closed tree on $X$. It is also straightforward that $X$ contains a $K^2$-isomorphic copy of $Y=\overline{\text{span}(e_i)}$ if and only if $T(X,(e_i)_i, K)$ is not well founded (or in other words has an infinite branch). Moreover, if $X$ is separable (and thus Polish), it follows from Proposition \ref{P:2.2} that $X$ contains an $K^2$-isomorphic copy of $Y=\overline{\text{span}(e_i)}$ if and only if $o(T(X,(e_i)_i, K))=\omega_1$.
At the technical level, Bourgain constructed for every ordinal $\alpha$, a separable reflexive Banach space $X_\alpha$ such that for some universal constant $K>0$, $T(X_\alpha, (e_i)_i, K)\ge \alpha$, where $(e_i)_i$ is a basis of $C[0,1]$. If a separable Banach space $Z$ is isomorphically universal for all separable reflexive Banach spaces, it is easy to see that it must be $C$-isomorphically universal for all separable reflexive Banach spaces for some $C\ge 1$. Indeed, if there exists a sequence of reflexive separable Banach spaces $(X_n)$ so that the embedding constants of them escape to infinity, then the reflexive separable space $(\sum_nX_n)_2$ would not embed into $Z$. Thus  $Z$ will contain a $C$-isomorphic copy of all the $X_\alpha$'s and thus $T(Z, (e_i)_i, D)=\omega_1$ for some $D\ge 1$, and based on the above discussion it follows that $Z$ contains an isomorphic copy of $C[0,1]$ (which is well-known to be linearly isometrically universal for all separable Banach spaces thanks to Banach's embedding theorem \cite{Banach1932}).

Bourgain's $(e_i)$-index of $X$ is defined as follows:
$$\mathrm{I}(X, (e_i))=\sup \{o(T(X,(e_i)_i, K))\colon K\ge 1\}.$$

We collect the key properties of the Bourgain's index of the canonical basis of $c_0$, simply denoted by $I_{\co}$, that we will need later on.

\begin{prop} Let $X, Y$ be separable Banach spaces.
\begin{enumerate}
\item If $X$ is a subspace of $Y$ then $I_{\co}(X)\leq I_{\co}(Y)$.
\item If $X$ is isomorphically equivalent to $Y$ then $I_{\co}(X)=I_{\co}(Y)$.
\item $\co$ embeds isomorphically into $X$ if and only if $I_{\co}(X)\ge \omega_1$.
\end{enumerate}
\end{prop}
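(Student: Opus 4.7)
The plan is to translate each statement into an assertion about the trees $T(X,(e_i)_i,K)$ and to invoke Proposition \ref{P:2.2} at the key step of (3).

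For (1), since $X$ is a closed linear subspace of $Y$ its norm is the restriction of the $Y$-norm, so the defining inequality for membership in $T(\cdot,(e_i)_i,K)$ yields, verbatim, the inclusion $T(X,(e_i)_i,K)\subseteq T(Y,(e_i)_i,K)$ for every $K\ge 1$. A routine transfinite induction on $\alpha$ shows that $T\subseteq S$ implies $T^{\alpha}\subseteq S^{\alpha}$, hence $o(T(X,(e_i)_i,K))\le o(T(Y,(e_i)_i,K))$, and taking the supremum over $K$ yields $I_{\co}(X)\le I_{\co}(Y)$. For (2), let $U\colon X\to Y$ be an isomorphism and set $C=\|U\|\cdot\|U^{-1}\|$. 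Comparing the norms of $\sum_k a_k x_k$ and $\sum_k a_k Ux_k$ shows that if $(x_1,\dots,x_n)\in T(X,(e_i)_i,K)$ then $(Ux_1,\dots,Ux_n)\in T(Y,(e_i)_i,CK)$. This coordinate-wise assignment respects initial segments and is therefore an order-preserving tree embedding, yielding $o(T(X,(e_i)_i,K))\le o(T(Y,(e_i)_i,CK))$. Taking suprema and exchanging the roles of $X$ and $Y$ gives (2).

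For (3), the forward direction is immediate: an isomorphic embedding of $\co$ into $X$ with distortion at most $K^{2}$ exhibits the images of the unit vector basis of $\co$ as an infinite branch of $T(X,(e_i)_i,K)$, so $o(T(X,(e_i)_i,K))=\infty$, and in particular $I_{\co}(X)\ge\omega_1$. For the converse, observe that $T(X,(e_i)_i,K)\subseteq T(X,(e_i)_i,K')$ whenever $K\le K'$, so the supremum defining $I_{\co}(X)$ reduces to $\sup_{n\in\N}o(T(X,(e_i)_i,n))$. Suppose towards a contradiction that every $T(X,(e_i)_i,n)$ is well-founded. Since $X$ is separable, hence Polish, and each such tree is closed, Proposition \ref{P:2.2} gives $o(T(X,(e_i)_i,n))<\omega_1$ for every $n$. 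But $\omega_1$ has uncountable cofinality, so a countable supremum of countable ordinals remains countable, contradicting $I_{\co}(X)\ge\omega_1$. Hence some $T(X,(e_i)_i,n)$ admits an infinite branch $(x_k)_{k=1}^\infty$, which witnesses an $n^{2}$-isomorphic embedding of $\co$ into $X$.

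The only point where the argument is not purely formal is the converse in (3). There one must pass from the ordinal statement $I_{\co}(X)\ge\omega_1$ to the combinatorial conclusion that some specific tree has an infinite branch; this uses both the descriptive-set-theoretic boundedness principle of Proposition \ref{P:2.2} and the regularity of $\omega_1$ (i.e., the fact that cofinality of $\omega_1$ is uncountable). Parts (1) and (2) are, by contrast, bookkeeping about the monotonicity and naturality of the ordinal index under tree inclusions and tree embeddings.
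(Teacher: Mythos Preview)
Your proof is correct and follows the same line of reasoning the paper sketches in the paragraph preceding the proposition; in fact the paper does not supply a separate formal proof, treating the proposition as a summary of the standard facts already explained there (closedness of the trees, the equivalence between an infinite branch and a $K^2$-isomorphic copy, and the invocation of Proposition~\ref{P:2.2}). Your write-up simply fills in the routine details for (1) and (2) and makes explicit the cofinality-of-$\omega_1$ step in (3).
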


\subsection{Schreier sets and higher order Tsirelson spaces} Schreier sets proved to be very useful to measure indices as well as to construct Banach
spaces having certain indices. We will also use them in the more general metric context.
 We denote by $[\N]^{<\omega}$ the set of finite subsets of $\N$. An element $\nb=\{n_1,n_2,\ldots, n_k\}\in[\N]^{<\omega}$ will always be written in strictly increasing order, \ie $n_1<n_2<\ldots<n_k$. If $A$ and $B$ are finite subsets of $\N$ we write $n\le A< B$ if $n\le \min(A)\le\max(A)< \min(B)$. For a countable ordinal $\alpha$ we denote by $\mathrm{S}_\alpha\subset [\N]^{<\omega}$  the \emph{Schreier family of order $\alpha$} which is defined recursively as follows:
\begin{enumerate}
\item[] $\mathrm{S}_0=\big\{ \{n\}: n\in\N\}$
\item[] $\mathrm{S}_{\alpha+1}=\Big\{ \bigcup_{j=1}^n E_j: E_j\in \mathrm{S}_\alpha, \text{ for }j=1,2\ldots n\text{ and }n\le E_1<E_2<\ldots<E_n  \Big\}$
\item[] $\mathrm{S}_\beta=\big\{ A\in [\N]^{<\omega}: \exists n\in\N, \text{ so that }n\le A, \text{ and } A\in \mathrm{S}_{\alpha_n}\big\}$, if $\beta$ is a limit ordinal, and $(\alpha_n)\subset [0,\alpha)$ is a (fixed) sequence which increases to $\beta$.
\end{enumerate}
The above definition of $\mathrm{S}_\beta$, for $\beta$ limit ordinal, is dependent on the choice of the sequence $(\alpha_n)$, but for our purposes the specific choice of $(\alpha_n)$ will be irrelevant. The Schreier sets $(\mathrm{S}_\alpha)_{\alpha<\omega_1}$ are collections of finite subsets of $\N$ with increasing complexity which naturally generate trees $T(\mathrm{S}_\alpha):=\{(n_1,n_2,\ldots,n_k)\colon \{n_i\}_{i=1}^k\in \mathrm{S_\alpha}\}$ on $\N$. It is not difficult to prove by transfinite induction that $o(T(\mathrm{S}_\alpha))=\omega^{\alpha}+1$.

We now describe a procedure to generate metric spaces using Schreier sets. Let $\mathcal{G}$ be a family of finite subsets of $\mathbb{N}$ and let $\E$ be a non-empty (finite or infinite) countable subset of $\mathbb{R}$. We define the subset of $c_{00}(\N)$
\[X_{\mathcal{G},\E} = \Big\{\sum_{i\in G}c_ie_i: G\in\mathcal{G}, c_i\in\E\text{ for }i\in G\Big\}\]
where $(e_i)$ is the canonical basis of $c_{00}$. We will endow $X_{\mathcal{G},\E}$ with the metric $d_\infty$ induced by the standard $c_0$-norm $\|\cdot\|_\infty$. When $\mathcal{G}=\mathrm{S_\alpha}$ we will simply denote by $(\mathrm{S}_\alpha(\mathbb{E}),d_\infty)$ the metric space obtained.
These metric spaces naturally embed into the {\em higher order Tsirelson spaces} $T^*_\alpha$,  which are reflexive Banach spaces
whose duals $T_\alpha$ have norms which are  implicitly defined based on an admissibility condition that involves
the Schreier sets. Although the  original space constructed   by Tsirelson \cite{Tsirelson1974} was $T^*_\alpha$, for $\alpha=1$,
nowadays their duals $T_\alpha$, are usually referred to as {\em Tsirelson spaces}, and  it is easier to define $T^*_\alpha$ by first defining $T_\alpha$.
We recall the crucial properties of the Banach space $T_\alpha^* $ (c.f. \cite{OdellSchlumprechtZsak2007}), that are needed in this article. The separable reflexive Banach space $T_\alpha^*$ is  asymptotic-$c_0$ and has a 1-unconditional basis $(u_i)_i$ with the property that for any $G\in\mathrm{S}_\alpha$ the sequence $(u_i)_{i\in G}$ is $2$-equivalent to the unit vector basis of $\ell_\infty^{|G|}$. From the latter property it follows that the natural embedding of $(\mathrm{S}_\alpha(\mathbb{E}),d_\infty)$ in $T_\alpha^*$ (mapping $\sum_{i\in G}c_ie_i$ to $\sum_{i\in G}c_iu_i$, for $G\in \mathrm{S}_\alpha$) is a $4$-Lipschitz isomorphism.
Moreover, it follows from \cite{OdellSchlumprechtZsak2007} that Bourgain's $c_0$-index of
$T^*_\alpha$ tends to $\omega_1$ as $\alpha$ tends to $\omega_1$.

\section{Metric universality via descriptive set theory}\label{S:3}
This section is deeply inspired by the profound ideas introduced by Bourgain and Bossard in connection with isomorphic universality, and the unification of these approaches initiated by Argyros and Dodos \cite{ArgyrosDodos}.  The most natural approach to prove Theorem \ref{T:A} (resp. Theorem \ref{T:B}), is to mimic Bourgain's strategy and construct an ordinal index that will detect the presence of a bi-Lipschitz (resp. coarse) copy of $\co$, and which behaves similarly to Bourgain $\co$-index. We can indeed (though non-trivially) adjust Bourgain's approach to prove the Lipschitz universality result in Section \ref{S:2.1}. Unfortunately some difficulties arise in the coarse setting. On one hand, in Section \ref{S:2.2}, we use additional set theoretic axioms to prove Theorem \ref{T:B}. On the other hand, we need to resort to the delicate theory of strongly bounded classes of Banach spaces to prove Theorem \ref{T:C}. This is carried over in Section \ref{S:2.3} where we will use a deep theorem of Dodos. With this organization, we hope it will be clear what is the scope of application of Bourgain's strategy and why it partially fails to work in the coarse framework.

\subsection{Lipschitz universality via a Lipschitz $\co$-index}\label{S:2.1}
To detect the presence of a linear isomorphic copy of $C[0,1]$ Bourgain used a tree ordinal index where the trees are defined by a fixed basis of $C[0,1]$.  By completeness,  we only need to find a dense subset of $\co$, in order to detect a Lipschitz copy of $\co$ while to detect a coarse copy of $\co$ we only need to find a $1$-net of $\co$. Note that $X_{[\mathbb{N}]^{<\omega},\mathbb{Q}}$ is a dense subset of $c_0$ and that $X_{[\mathbb{N}]^{<\omega},\mathbb{Z}}$ is a 1-net in $c_0$. It will be very useful to understand $X_{\mathcal{G},\E}$ as the collection of all $f:\mathbb{N}\to\E$ for which there is $G\in\mathcal{G}$ so that $\mathrm{supp}(f)\subset G$.
 To handle the nonlinearity of our universality problem we will introduce combinatorial objects called vines which will be a substitute for trees. The elements of a vine $\mathcal{V}$ will also be  collections of elements of $X$, but they will be indexed over collections of finitely supported functions $f:\mathbb{N}\to\E$, where $\E$ is a fixed countable subset of $\mathbb{R}$, with $0\in \E$. Such elements will be called bunches. For a collection $\mathcal{V}$ of bunches to be called a vine it must also be closed under a certain restriction operation. Formally, for a (finite or infinite) countable subset $\E$ of $\mathbb{R}$, with $0\in\E$,  and finite subset $G$ of $\mathbb{N}$ we call the set
 \[[\E,G] = \{f:\N\to \E \text{ with }\mathrm{supp}(f)\subset G\}\]
 an \emph{$\E$-bunch}.  Note that if $G=\emptyset$, then $[\E,G]=\{0\}$, where $0:\N\to \E$ is the constant zero map.
 We put
 $$c_{00}^\E=\bigcup_{G\in[\N]^{<\omega}} [G,\E]=\big\{ (\xi_j)\subset \E: \{j\in\N: \xi_j\not=0\} \text{ is finite} \big\}$$
 which is dense in $c_0$ if $\E$ is dense in $\R$.
 Given a set $X$ and a countable subset $\E$ of $\mathbb{R}$, every element of the form $\chi = (x_f)_{f\in[\E,G]}$ in $X^{[\E,G]}$ will be called an $\E$-bunch over $X$. We define a partial order on the set of $\E$-bunches over $X$ as follows. If $\chi = (x_f)_{f\in[\E,F]}$, $\psi = (y_f)_{f\in[\E,G]}$ we shall write $\chi \preceq \psi$ if $F$ is an initial segment of $G$ and for every $f\in[\E,F]$ we have $y_f = x_f$. This makes sense because $[\E,F] \subset [\E,G]$.
 If $G=\emptyset$  then $X^{[\E,G]}$  will be in an obvious way identified with $X$, and we note that for  $G\in[\N]^{<\omega}$ and $(x_f)_{f\in[\E,G]}$, $x_0\equiv (x_f)_{f\in[\E,\emptyset]}\preceq (x_f)_{f\in[\E,G]}$, or more generally
 $(x_f)_{f\in[\E,F]}\preceq(x_f)_{f\in[\E,G]}$ for all initial segments $F$ of $G$.

A set  $\mathcal{V}$
of $\E$-bunches over $X$ is called an \emph{$\E$-vine over $X$} if for all $\chi\in\mathcal{V}$ the set $[\psi\preceq\chi]$ is a subset of $\mathcal{V}$. Note that $[\psi\preceq\chi]$ is finite and totally ordered and hence $(\mathcal{V},\preceq)$ is a tree in the abstract classical sense. We shall say that the $\E$-vine $\mathcal{V}$ is \emph{well founded} if the tree  $(\mathcal{V},\preceq)$ is well founded, \ie it contains no infinite totally ordered subsets. We define the derivatives of vines:

  For a vine $\cV$ we put
  \begin{align*}
  \mathcal{V}^{(1)} &= \mathcal{V}\setminus\{\chi\in\mathcal{V}: \chi\text{ is }\preceq\text{-maxinal}\},
   \intertext{and recursively for any ordinal }
\mathcal{V}^{(\alpha+1)} &= (\mathcal{V}^{(\alpha)})^{(1)},
\intertext{ and for a  limit ordinal $\alpha$, }
\mathcal{V}^{(\alpha)} &= \bigcap_{\beta<\alpha}\mathcal{V}^{(\beta)}.
\end{align*}
 Then, the ordinal index of $\mathcal{V}$ is $o(\mathcal{V}) = \min\{\alpha: \mathcal{V}^{(\alpha)} = \emptyset\}$. This is well defined if $\mathcal{V}$ is well founded. As for trees, under appropriate assumptions, being well founded is equivalent to having countable ordinal index. This will be proved in Proposition \ref{well founded if countable index}.

For $n\in\N\cup\{0\}$ we define
\[\mathcal{V}_{(n)} = \big\{\chi = (x_f)_{f\in[\E,G]}: |G| = n\big\} = \mathcal{V}\cap\left(\bigcup_{G\in[\mathbb{N}]^n}X^{[\E,G]}\right).\]
If $X$ is a topological space then for each $G\in[\mathbb{N}]^n$ the set $X^{[\E,G]}$ can be equipped with the product topology. Then the disjoint union $\cup_{G\in[\mathbb{N}]^n}X^{[\E,G]}$ can be endowed with the induced topology. In particular, $\mathcal{V}_{(n)}$ is a topological space. We shall call $\mathcal{V}$ a \emph{closed $\E$-vine} if $\mathcal{V}_{(n)}$ is a closed subset of $\cup_{G\in[\mathbb{N}]^n}X^{[\E,G]}$ for all $n\in\mathbb{N}$. This is equivalent to saying that for all $G\in[\mathbb{N}]^{<\omega}$ the set $\mathcal{V}\cap X^{[\E,G]}$ is closed. Note that $\mathcal{V}$ being closed does \emph{not} imply that the set $\cup_{\chi = (x_f)_{f\in[\E,G]}\in\mathcal{V}}\{x_f: f\in [\E,G]\}$ is a closed subset of $X$.

We can define $\pi_n:\mathcal{V}_{(n+1)}\to\mathcal{V}_{(n)}$ as follows. If $G\in[\mathbb{N}]^{n+1}$ set $G' = G\setminus\{\max(G)\}$. Given $\chi = (x_f)_{f\in[\E,G]}$ in $\mathcal{V}_{(n+1)}$ we define $\pi_n(\chi) = (x_f)_{f\in[\E,G']}$, which is in $\mathcal{V}_{(n)}$. Note that a collection $\mathcal{V}$ of $\E$-bunches over $X$ is an $\E$-vine if and only if for all $n\in\mathbb{N}$ we have that $\pi_n[\mathcal{V}_{(n+1)}]\subset\mathcal{V}_{(n)}$. Also, if $X$ is a topological space then $\pi_n$ is a continuous function.

The following is an analogue for vines of \cite[Lemma 2]{Bourgain1980} and the proof is nearly identical.

\begin{lem}
\label{no approximate maximal branches}
Let $\E$ be a countable subset of $\mathbb{R}$ and $\mathcal{V}$ be a closed $\E$-vine over a complete metric space $(X,d)$. Assume that for all $n\in\mathbb{N}$ we have $\mathcal{V}_{(n)} = \overline{\pi_n[\mathcal{V}_{(n+1)}}]$. Then either $\mathcal{V} = \emptyset$ or $\mathcal{V}$ is not well founded.
\end{lem}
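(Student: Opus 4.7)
The lemma is a vine analogue of Bourgain's Lemma~2 for trees (as the excerpt itself points out), and my plan is to follow the same skeleton. Assume $\mathcal{V}\neq\emptyset$. Since a vine is closed under $\preceq$-predecessors, picking any $\chi\in\mathcal{V}$ and taking its restriction to the empty initial segment yields an element of $\mathcal{V}_{(0)}$, so $\mathcal{V}_{(0)}\neq\emptyset$. The goal is to construct an infinite $\preceq$-chain $\chi_0\preceq\chi_1\preceq\cdots$ with $\chi_n\in\mathcal{V}_{(n)}$, which witnesses that $\mathcal{V}$ is not well founded.

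The construction is diagonal. I will recursively produce a nested sequence of supports $G_0=\emptyset\subsetneq G_1\subsetneq G_2\subsetneq\cdots$ with $|G_k|=k$ and $G_k\setminus\{\max G_k\}=G_{k-1}$, together with bunches $\chi_k^k\in\mathcal{V}_{(k)}\cap X^{[\E,G_k]}$. Start with any $\chi_0^0\in\mathcal{V}_{(0)}$. Inductively, given $\chi_{k-1}^{k-1}\in X^{[\E,G_{k-1}]}\cap\mathcal{V}_{(k-1)}$, I invoke the hypothesis $\mathcal{V}_{(k-1)}=\overline{\pi_{k-1}[\mathcal{V}_{(k)}]}$: because $X^{[\E,G_{k-1}]}$ is clopen in the disjoint-union topology on $\bigcup_{H\in[\N]^{k-1}}X^{[\E,H]}$, any sequence in $\pi_{k-1}[\mathcal{V}_{(k)}]$ that converges to $\chi_{k-1}^{k-1}$ must eventually lie in $X^{[\E,G_{k-1}]}$. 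This permits the choice of $\chi_k^k\in\mathcal{V}_{(k)}$ with support $G_{k-1}\cup\{m_k\}$ for some $m_k>\max G_{k-1}$ and with $d_\infty\bigl(\pi_{k-1}(\chi_k^k),\chi_{k-1}^{k-1}\bigr)<2^{-k}$, where $d_\infty$ is a compatible metric on $X^{[\E,G_{k-1}]}$ for which coordinate projections are $1$-Lipschitz. I then set $G_k:=G_{k-1}\cup\{m_k\}$ and, for $n<k$, $\chi_n^k:=\pi_n\circ\pi_{n+1}\circ\cdots\circ\pi_{k-1}(\chi_k^k)\in\mathcal{V}_{(n)}\cap X^{[\E,G_n]}$.

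By the $1$-Lipschitz property of each $\pi_j$, a perturbation of size $2^{-k}$ at level $k-1$ propagates to a perturbation of size at most $2^{-k}$ at every lower level; consequently, for each fixed $n$ the sequence $(\chi_n^k)_{k\geq n}$ is Cauchy in the complete metric space $X^{[\E,G_n]}$. Let $\chi_n$ be its limit. Closedness of $\mathcal{V}$ gives $\chi_n\in\mathcal{V}_{(n)}$, continuity of $\pi_n$ yields $\chi_n=\pi_n(\chi_{n+1})$, and hence $\chi_n\preceq\chi_{n+1}$. The sequence $(\chi_n)_{n\geq 0}$ is thus an infinite $\preceq$-chain in $\mathcal{V}$, so $\mathcal{V}$ is not well founded.

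The main obstacle, distinguishing this situation from the classical Bourgain tree setting, is combinatorial rather than topological: bunches at a given level live in different summands of a disjoint union, and $\pi_n$ is tied to the rigid relation $G'=G\setminus\{\max G\}$, so one cannot naively perturb $\chi_k^k$ inside a single ambient metric space. The crucial point of the proof is therefore the bookkeeping which forces the supports $(G_k)_k$ to form one nested chain, so that the whole diagonal array $(\chi_n^k)_{k\geq n}$ actually lives in a \emph{single} complete metric space $X^{[\E,G_n]}$ where the standard Cauchy-completion argument can close the loop. The openness of each component $X^{[\E,G]}$ in the disjoint-union topology is precisely what makes this bookkeeping possible.
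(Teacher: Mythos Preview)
Your diagonal construction is essentially the paper's proof: approximate at each level, pass to the limit of the resulting Cauchy sequences, and invoke closedness of $\mathcal{V}$. The one point that needs repair is your metric $d_\infty$. When $\E$ is infinite (and $\E=\Q$ is the case actually used downstream), the index set $[\E,G_{k-1}]$ is countably infinite for every $k\geq 2$, and no metric on the product $X^{[\E,G_{k-1}]}$ can be simultaneously compatible with the product topology and have all single-coordinate projections $1$-Lipschitz: a sequence that is eventually zero in every coordinate but carries value $n$ in the $n$-th coordinate would have to converge to $0$ in any compatible metric, contradicting the Lipschitz bound. Since the closure in the hypothesis $\mathcal{V}_{(k-1)}=\overline{\pi_{k-1}[\mathcal{V}_{(k)}]}$ is taken in the product topology, compatibility is not negotiable for your approximation step; so the metric you describe does not exist.

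What your argument actually \emph{uses}, however, is only that each restriction map $\pi_j$ is $1$-Lipschitz between levels, and this is available with a compatible metric: fix an enumeration $\phi:c_{00}^\E\to\N$ and put $d_G\bigl((x_f),(y_f)\bigr)=\sum_{f\in[\E,G]}2^{-\phi(f)}\min\bigl(d(x_f,y_f),1\bigr)$; then each $\pi_j$ is clearly $1$-Lipschitz from $d_{G_{j+1}}$ to $d_{G_j}$, $(X^{[\E,G]},d_G)$ is complete, and your proof goes through verbatim. The paper handles the same difficulty differently: it fixes an exhaustion $\E_1\subset\E_2\subset\cdots$ of $\E$ by finite sets and, at step $m$, only demands closeness on the finitely many coordinates indexed by $[\E_m,G_m]$. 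This matches exactly what the product-topology closure delivers and still yields coordinatewise Cauchy sequences, since every fixed $f\in[\E,G_n]$ lies in $[\E_m,G_m]$ for all large $m$.
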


\begin{proof}
We fix an enumeration $\{\epsilon_i:i\in\mathbb{N}\}$ of $\E$ and for all $n\in\mathbb{N}$ we set $\E_n = \{\epsilon_1,\ldots,\epsilon_n\}$. Assuming $\mathcal{V}\neq \varnothing$, we can find an $x_0\equiv (x_f)_{f\in[\E,\emptyset]}\in \cV\cap X=\cV_{(0)}$.
Since $V_{(0)}=\overline{\pi_0(V_{(1)})}$
we find  $\chi_1 = (x^{(1)}_f)_{f\in[\E,\{k_1\}]}\in\mathcal{V}_{(1)}$ so that $\|\pi_0(\chi_1)-x_0\|<1$. By assumption there exists $k_2>k_1$ and $\chi_2 = (x^{(2)}_f)_{f\in[\E,\{k_1,k_2\}]}\in\mathcal{V}_{(2)}$ so that for $f\in[\E_1,\{k_1\}]$ we have $d(x^{(1)}_f,x^{(2)}_f) \leq 1/2$. Proceed inductively to find an increasing sequence of integers $(k_m)_{m=1}^\infty$ and a sequence $(\chi_m)_{m=1}^\infty$ so that $\chi_m = (x^{(m)}_f)_{f\in[\E,\{k_1,k_2,\ldots,k_m\}]}\in\mathcal{V}_{(m)}$ and for all $m\in\mathbb{N}$ and $f\in[\E_m,\{k_1,\ldots,k_m\}]$ we have $d(x^{(m)}_f,x_f^{(m+1)}) \leq 1/2^m$. We conclude that for any $m_0\in\mathbb{N}$ and $f\in[\E_{m_0},\{k_1,\ldots,k_{m_0}\}]$ the sequence $(x_f^{(m)})_{m\geq m_0}$ is Cauchy and we denote its limit by $y_f$. Because $\mathcal{V}$ is an $\E$-vine it is closed under taking projections $\pi_n$ and because $\mathcal{V}$ is assumed to be closed we deduce that $\psi_m = (y_f)_{f\in[\E,\{k_1,\ldots,k_m\}]}$ is in $\mathcal{V}$ for all $m\in\mathbb{N}$. Because $(\psi_m)_m$ is an infinite chain the $\E$-vine $\mathcal{V}$ must be ill founded.
\end{proof}

The following is the analogue of Proposition \ref{P:2.2} for vines.

\begin{prop}
\label{well founded if countable index}
Let $\E$ be a countable subset of $\mathbb{R}$ and $\mathcal{V}$ be a closed $\E$-vine on a Polish space. If $\mathcal{V}$ is well founded then $o(\mathcal{V})<\omega_1$.
\end{prop}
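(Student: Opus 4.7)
The plan is to adapt the standard descriptive set theoretic argument for closed well founded trees on Polish spaces (Proposition~\ref{P:2.2}): in the tree case one uses that the iterated derivatives form a non-increasing transfinite family of closed sets in a Polish space, and such a family can strictly decrease at only countably many indices. The obstruction in the vine setting is that the images $\pi_n[\mathcal{V}_{(n+1)}]$ defining $\mathcal{V}^{(1)}$ need not be closed, even when $\mathcal{V}$ is. I would circumvent this by introducing an auxiliary \emph{closed} derivative $\widetilde{\mathcal{V}}^{(\alpha)}$ via $\widetilde{\mathcal{V}}^{(0)} = \mathcal{V}$,
\[
\widetilde{\mathcal{V}}^{(\alpha+1)}_{(n)} = \overline{\pi_n[\widetilde{\mathcal{V}}^{(\alpha)}_{(n+1)}]}, \qquad \widetilde{\mathcal{V}}^{(\lambda)} = \bigcap_{\alpha<\lambda}\widetilde{\mathcal{V}}^{(\alpha)} \text{ for limit } \lambda.
\]
A transfinite induction verifies that each $\widetilde{\mathcal{V}}^{(\alpha)}$ is a closed $\E$-vine contained in $\mathcal{V}$ and that $\mathcal{V}^{(\alpha)} \subseteq \widetilde{\mathcal{V}}^{(\alpha)}$ for every $\alpha$. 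The vine property at the successor step uses continuity of $\pi_n$: if $\chi \in \widetilde{\mathcal{V}}^{(\alpha+1)}_{(n+1)}$ is the limit of $\pi_{n+1}(\psi_k)$ with $\psi_k \in \widetilde{\mathcal{V}}^{(\alpha)}_{(n+2)}$, then $\pi_n(\chi)$ is the limit of $\pi_n\pi_{n+1}(\psi_k) \in \pi_n[\widetilde{\mathcal{V}}^{(\alpha)}_{(n+1)}]$, hence lies in $\widetilde{\mathcal{V}}^{(\alpha+1)}_{(n)}$.

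Assume $\mathcal{V}$ is well founded. Then every $\widetilde{\mathcal{V}}^{(\alpha)} \subseteq \mathcal{V}$ is a well founded closed $\E$-vine, so the contrapositive of Lemma~\ref{no approximate maximal branches} forces, whenever $\widetilde{\mathcal{V}}^{(\alpha)}$ is nonempty, the existence of some $n \in \N$ with $\widetilde{\mathcal{V}}^{(\alpha+1)}_{(n)} = \overline{\pi_n[\widetilde{\mathcal{V}}^{(\alpha)}_{(n+1)}]} \subsetneq \widetilde{\mathcal{V}}^{(\alpha)}_{(n)}$. For each fixed $n$, the sets $\widetilde{\mathcal{V}}^{(\alpha)}_{(n)}$ form a non-increasing transfinite family of closed subsets of the Polish (hence second countable) space $\bigcup_{G\in[\N]^n} X^{[\E,G]}$, which is a countable disjoint topological union of Polish spaces. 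A classical argument --- for each strict-decrease index $\alpha$ choose a basic open set containing some point of $\widetilde{\mathcal{V}}^{(\alpha)}_{(n)}\setminus\widetilde{\mathcal{V}}^{(\alpha+1)}_{(n)}$ and disjoint from the closed set $\widetilde{\mathcal{V}}^{(\alpha+1)}_{(n)}$, and observe that distinct strict-decrease indices must receive distinct basic opens --- shows that the set $B_n$ of strict-decrease indices at level $n$ is countable; hence so is $B = \bigcup_n B_n$.

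Since $\{\alpha : \widetilde{\mathcal{V}}^{(\alpha)} \neq \emptyset\}$ is a downward closed initial segment of ordinals contained in $B$, it is countable; so there exists $\alpha^* < \omega_1$ with $\widetilde{\mathcal{V}}^{(\alpha^*)} = \emptyset$, and the inclusion $\mathcal{V}^{(\alpha^*)} \subseteq \widetilde{\mathcal{V}}^{(\alpha^*)}$ yields $o(\mathcal{V}) \leq \alpha^* < \omega_1$. The main subtlety is the closure step in the successor derivative: it is indispensable for preserving closedness and thus enabling the stabilization principle for closed subsets of a second countable space, yet it also enlarges the derivative, so Lemma~\ref{no approximate maximal branches} is essential to guarantee that this enlargement cannot entirely compensate for the loss of maximal elements --- that is, some level must still shrink strictly at every nonempty stage.
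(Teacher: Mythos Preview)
Your proof is correct and follows essentially the same strategy as the paper's: both combine Lemma~\ref{no approximate maximal branches} with the fact that a decreasing transfinite family of closed subsets of a Polish space can strictly decrease at only countably many stages. The only difference is organizational---the paper first lets the closures $\overline{(\mathcal{V}^{(\alpha)})_{(n)}}$ stabilize at some $\eta<\omega_1$ and then applies Lemma~\ref{no approximate maximal branches} once to the resulting vine $\mathcal{W}=\bigcup_n\overline{(\mathcal{V}^{(\eta)})_{(n)}}$, whereas you build closure into an auxiliary derivative $\widetilde{\mathcal{V}}^{(\alpha)}$ and invoke the lemma at every successor step to force a strict drop; these are two packagings of the same argument.
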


\begin{proof}
We will show that there is $\eta<\omega_1$ so that $\mathcal{V}^{(\eta)} = \emptyset$.  It is easily observed that for any $n\in\mathbb{N}$ and ordinal $\alpha$ we have
\begin{equation}\label{stating the obvious}(\mathcal{V}^{(\alpha+1)})_{(n)} = \pi_n[(\mathcal{V}^{(\alpha)})_{(n+1)}],
\end{equation}
\ie a $\chi$ of length $n$ is in $\mathcal{V}^{(\alpha+1)}$ if an only if it is the direct predecessor of a $\psi$ of length $n+1$ in $\mathcal{V}^{(\alpha)}$.
For $n\in\mathbb{N}$, consider the decreasing hierarchy of closed sets $\overline{(\mathcal{V}^{(\alpha)})_{(n)}}$, $\alpha<\omega_1$ of $\cup_{G\in[\mathbb{N}]^n}X^{[\E,G]}$. Because $X$ is Polish, so is $\cup_{G\in[\mathbb{N}]^n}X^{[\E,G]}$ and therefore there must exist an $\alpha_n <\omega_1$ so that for all $\beta>\alpha_n$ we have $\overline{(\mathcal{V}^{(\alpha_n)})_{(n)}} = \overline{(\mathcal{V}^{(\beta)})_{(n)}}$. This is because in a Polish space there can be no strictly increasing transfinite hierarchy of open sets of length $\omega_1$. Take $\eta = \sup_n\alpha_n$ and define $\mathcal{W} = \cup_{n=1}^\infty \overline{(\mathcal{V}^{(\eta)})_{(n)}}$. We observe  that $\mathcal{W}$ is an $\E$-vine over $X$. We show that $\mathcal{W}$ satisfies the assumption of Lemma \ref{no approximate maximal branches}. Indeed, for $n\in\mathbb{N}$ we have
\begin{align*}
\mathcal{W}_{(n)} &= \overline{(\mathcal{V}^{(\eta)})_{(n)}} = \overline{(\mathcal{V}^{(\eta+1)})_{(n)}} \text{ (by the choice of $\eta$)}\\
&=\overline{\pi_n[(\mathcal{V}^{(\eta)})_{(n+1)}]} \text{ (by \eqref{stating the obvious})}\\
&= \overline{\pi_n\Big[\overline{(\mathcal{V}^{(\eta)})_{(n+1)}}\Big]} \text{ (by continuity of $\pi_n$)}\\
&=\overline{\pi_n[\mathcal{W}_{(n+1)}]}.
\end{align*}
This means that either $\mathcal{W}=\emptyset$ or $\mathcal{W}$ is ill founded. Because $\mathcal{V}$ is closed $\mathcal{W}\subset\mathcal{V}$ and because $\mathcal V$ is well founded, so is $\mathcal W$ and hence $\mathcal{W} = \emptyset$. It follows that $\mathcal{V}^{(\eta)} = \cup_{n\in\mathbb{N}}(\mathcal{V}^{(\eta)})_{(n)}\subset\mathcal{W} = \emptyset$. Therefore, $o(\mathcal{V})\leq \eta$.
\end{proof}

We can now introduce an ordinal index that will capture the presence of a bi-Lipschitz copy of $\co$ in a metric space.
For any $C>0$, any metric space $(M,d)$, and any countable subset $\mathbb{E}$ of $\R$, it is easy to verify that the set (think of $[\E,G]$ being a subset of $c_0$)
\[
\begin{split}
\mathcal{V}(M,\mathbb{E}, C) = \left\{(x_f)_{f\in [\mathbb{E},G]}:\,\begin{matrix} G\in[\mathbb{N}]^{<\omega}, x_f\in M \text{ for }f\in[\mathbb{E},G],\text{ and}\\
\forall f,g\kin[\mathbb{E},G]\quad\frac{1}{C}\|f-g\|_\infty \leq d(x_f,x_g)\leq C\|f-g\|_\infty\end{matrix}\right\},
\end{split}
\]
is a closed $\mathbb{E}$-vine on $M$.
We define the Lipschitz $\co$-index of $M$ as
$$\mathrm{I}^{\Lip}_{\co}(M)=\sup \{o(\mathcal{V}(M,\mathbb{Q}, C)\colon C>0\}.$$

\begin{prop}\label{Lipschitzindex}
Let $M$ be a Polish space. Then,
\[\co \text{ bi-Lipschitzly embeds into } M \text{ if and only if } \mathrm{I}^{\Lip}_{\co}(M)\ge \omega_1.\]
\end{prop}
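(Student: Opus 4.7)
My plan is to prove both directions of the equivalence separately, with the substantive work concentrated in the reverse direction. For the forward direction, assuming $\co$ bi-Lipschitzly embeds into $M$, I would first rescale the embedding to produce a map $\varphi:\co\to M$ and a constant $C\ge 1$ satisfying $C^{-1}\|x-y\|_\infty\le d(\varphi(x),\varphi(y))\le C\|x-y\|_\infty$. Then, for any strictly increasing sequence $G_1\subsetneq G_2\subsetneq\cdots$ in $[\N]^{<\omega}$, the bunches $\chi_n=(\varphi(f))_{f\in[\Q,G_n]}$ lie in $\cV(M,\Q,C)$ and form an infinite strict $\preceq$-chain, witnessing ill-foundedness and hence $o(\cV(M,\Q,C))=\infty\ge\omega_1$.

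For the reverse direction, the strategy is: (i) exploit the monotonicity $\cV(M,\Q,C_1)\subset\cV(M,\Q,C_2)$ for $C_1\le C_2$ to rewrite $\mathrm{I}^{\Lip}_{\co}(M)$ as the countable supremum $\sup_{n\in\N}o(\cV(M,\Q,n))$; (ii) invoke regularity of $\omega_1$ --- a countable supremum of countable ordinals is countable --- to conclude that the hypothesis $\mathrm{I}^{\Lip}_{\co}(M)\ge\omega_1$ forces some $n\in\N$ to satisfy $o(\cV(M,\Q,n))\ge\omega_1$; (iii) apply Proposition~\ref{well founded if countable index} to deduce that this closed $\Q$-vine on the Polish space $M$ is ill founded; (iv) extract an infinite strict chain $\chi_k=(x_f^{(k)})_{f\in[\Q,G_k]}$ with $G_1\subsetneq G_2\subsetneq\cdots$, set $G=\bigcup_kG_k$, and use $\preceq$-compatibility along the chain to define a coherent map $\varphi(f)=x_f^{(k)}$ on the $\Q$-linear space of finitely supported rational functions with support in $G$; (v) extend $\varphi$ by uniform continuity to the $\|\cdot\|_\infty$-closure of this domain, which is linearly isometric to $\co$, using completeness of $M$.

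The step I expect to be the main conceptual obstacle is (ii), the descent from an abstract uncountable supremum to a single ill-founded vine. It is resolved by pairing monotonicity in $C$ with regularity of $\omega_1$, in exact analogy with the classical Bourgain argument for the $\co$-index of Banach spaces. The remaining steps are essentially book-keeping: the bi-Lipschitz bounds on $\varphi$ are automatic from membership of the $\chi_k$ in $\cV(M,\Q,n)$, and once $\varphi$ is defined and bi-Lipschitz on a dense subset of a copy of $\co$ inside $M$, uniform continuity and completeness of $M$ deliver the desired bi-Lipschitz embedding of $\co$.
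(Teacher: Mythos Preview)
Your proposal is correct and follows essentially the same approach as the paper. One minor phrasing issue: in the forward direction, an arbitrary inclusion chain $G_1\subsetneq G_2\subsetneq\cdots$ need not yield a $\preceq$-chain, since $\preceq$ requires $G_k$ to be an \emph{initial segment} of $G_{k+1}$; simply take $G_k=\{1,\ldots,k\}$ (or any sequence where each $G_k$ consists of the first $k$ elements of a fixed infinite set). In the reverse direction, your monotonicity-plus-regularity argument is a slightly cleaner packaging of the paper's pigeonhole (the paper picks, for each $\alpha<\omega_1$, a constant $C_\alpha$ with $o(\cV(M,\Q,C_\alpha))\ge\alpha$ and then finds an uncountable set of $\alpha$'s with bounded $C_\alpha$), but the two are logically equivalent and the remaining steps---extracting an infinite chain, defining $\varphi$ on $c_{00}^{\Q}$, and extending by completeness of the Polish space $M$---match the paper exactly.
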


\begin{proof}
The necessary implication is easy. Indeed, if $\psi$ is a Lipschitz embedding from $c_0$ into $M$, define for $G\in [\N]^{<\omega}$ and $f\in [\Q,G]$, $x_f=\psi(\sum_{i\in G} f(i)e_i)$. Then, for some $C\ge 1$, the set $\{(x_f)_{f\in [\mathbb{Q},G]}:\; G\in[\mathbb{N}]^{<\omega}\}$ is included in $\mathcal{V}(M,\mathbb{Q}, C)$ which is therefore ill founded.

Assume now that $\mathrm{I}^{\Lip}_{\co}(M)=\omega_1$, then for every countable ordinal $\alpha$ there exist $C_\alpha>0$ such that $o(\mathcal{V}(M,\mathbb{Q}, C_\alpha)\ge \alpha$.
Using a simple pigeonhole argument we can find $C\ge 1$ and an uncountable sub-collection $U$ of $[1,\omega_1)$, such that for all $\alpha \in U$ we have $C_\alpha\le C$. Since obviously $o(\mathcal{V}(M,\mathbb{Q}, C))\ge o(\mathcal{V}(M,\mathbb{Q}, C_\alpha))\ge \alpha$ for every $\alpha \in U$, it follows from Proposition \ref{well founded if countable index} that $\mathcal{V}(M,\mathbb{Q}, C)$ is not well founded, \ie there exists a strictly increasing sequence of integers $(k_m)_m$
 and for $m\in\N\cup\{0\}$ an $M$-bunch $\chi_m=\big(x^{(m)}_f: f\in [\{k_1,k_2,\ldots,k_m\}, \E]\big)\in \mathcal{V}(M,\mathbb{Q}, C)$ so that $\chi_0\preceq\chi_1\preceq\chi_2\ldots$.
 But this means that  for every  finitely supported $f:\{k_1,k_2,k_3,\ldots, \}\to \Q$ there is an $x_f\in M$, so that $\chi_m=\big(x_f: f\in [\{k_1,k_2,\ldots,k_m\}, \E]\big)$, for $m\in\N$.
We define
\begin{align*}
&\psi: c_{00}^{\Q}\to M\ \text{ by } \psi((q_j)_j)=x_{f}\ \text{ where } f: \{k_1,k_2,\ldots \}\to \Q,\ \text{ is defined by } f(k_j)=q_j.
\end{align*}
It follows that $\psi $ is a bi-Lipschitz embedding  from $c_{00}^{\Q}$ (with the $c_0$-norm) into $M$. Since  $c_{00}^{\Q}$ is dense in $c_0$ and $M$ is complete, $\psi$ can be extended to a bi-Lipschitz embedding from $c_0$ into $M$.
\end{proof}

To complete the proof of Theorem \ref{T:A} it remains to show that if a complete separable metric space $M$ is Lipschitz-universal for the collection of rational valued Schreier metrics then $\mathrm{I}^{\Lip}_{\co}(M)\ge \omega_1$.

\begin{proof}[Proof of Theorem \ref{T:A}]
Assume that  for every ordinal $\alpha$, $(M,d)$ admits bi-Lipschitz embeddings of $(\mathrm{S}_\alpha(\Q),d_\infty)$. Thus, after an eventual extraction argument, there exist a constant $C>0$, an uncountable $A\subset[0,\omega_1)$, and maps $F_\alpha\colon (\mathrm{S}_\alpha(\Q),d_\infty)\to (M,d)$, $\alpha\in A$, such that for all $f,g\in \mathrm{S}_\alpha(\Q)$ and $\alpha\in A$
\begin{equation}\label{E:3}
\frac{1}{C}\|f-g\|_\infty \leq d(F_{\alpha}(f),F_{\alpha}(g))\leq C\|f-g\|_\infty.
\end{equation}
It follows that $\mathcal{V}(M,\mathbb{Q}, C)$ has ordinal index at least $o(\mathrm{S}_\alpha) = \omega^\alpha+1$, for all $\alpha\in A$. To see this, define for every $f$ in $\mathrm{S}_\alpha(\Q)$ the vector  $x_f = F_{\alpha}(f)$ and let $\mathcal{W} = \{(x_f)_{f\in[\mathbb{Q},G]}: G\in\mathrm{S}_\alpha\}$, which is thanks to \eqref{E:3}  a sub-vine of $\mathcal{V}(M,\mathbb{Q}, C)$ that has the same tree index as $\mathrm{S}_\alpha$.
\end{proof}

\subsection{Coarse universality via a coarse $\co$-index in MA+$\neg$CH}\label{S:2.2}
The technique from Section \ref{S:2.1} do not seem to be robust enough to prove  the statement of  Theorem \ref{T:B} without any further set theoretic assumptions. The main roadblock is that the simple extraction argument that provides equi-bi-Lipschitz embeddings from an uncountable collection of bi-Lipschitz embeddings does not hold in the coarse setting. Under some additional set-theoretic axioms, MA+$\neg$CH, we can prove Theorem \ref{T:B}. The advantage of assuming that Martin's Axiom holds, but the Continuum Hypothesis fails, lies in the fact that the following {\em diagonalization property} of infinite subsets of $\N$ (cf. \cite[page 3ff]{Fremlin1984} will be valid.
\begin{lem}(MA+$\neg$CH)\label{L:}
Let $(N_\alpha)_{\alpha<\omega_1}\subset [\N]^\omega$ have the property that $N_\beta\setminus N_\alpha$ is finite whenever $\alpha<\beta$ (in which case we say that $N_\beta $ is {\em almost contained } in $N_\alpha$ and write $N_\beta\subset^a N_\alpha$).
Then there exists $N$ in $[\N]^\omega$ so that $N\subset^a N_\alpha$, for all $\alpha<\omega_1$.
\end{lem}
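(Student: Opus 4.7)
The plan is to invoke Martin's Axiom on a carefully chosen $\sigma$-centered forcing notion whose generic filter produces the desired $N$. I will introduce the poset $P$ consisting of all pairs $(s,F)$ with $s\in[\N]^{<\omega}$ and $F\in[\omega_1]^{<\omega}$, ordered by $(s',F')\leq(s,F)$ iff $s\subseteq s'$, $F\subseteq F'$, and $s'\setminus s\subseteq\bigcap_{\alpha\in F}N_\alpha$. Intuitively $s$ is a finite initial approximation to the desired set $N$, while $F$ records the finitely many almost-containment constraints $N\subset^a N_\alpha$ to which the current condition is committed.

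The first step is to check that $P$ is $\sigma$-centered: for each fixed $s\in[\N]^{<\omega}$, any finite family of conditions $(s,F_1),\ldots,(s,F_k)$ admits the common extension $(s,F_1\cup\cdots\cup F_k)$, so $P$ decomposes as a countable union of centered sets and is in particular ccc. The next step is to verify two relevant families of dense sets. Crucially, this is where the tower hypothesis enters: for any finite $F\subset\omega_1$, setting $\beta=\max F$ yields $N_\beta\setminus N_\alpha$ finite for every $\alpha\in F$, whence $\bigcap_{\alpha\in F}N_\alpha$ is cofinite in $N_\beta$ and in particular infinite. It follows that for each $n\in\N$ the set $E_n=\{(s,F):|s|\geq n\}$ is dense (enlarge $s$ by adjoining $n$ elements of $\bigcap_{\alpha\in F}N_\alpha$ above $\max s$), and for each $\alpha<\omega_1$ the set $D_\alpha=\{(s,F):\alpha\in F\}$ is trivially dense.

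Since MA${+}\neg$CH entails $\mathfrak{c}>\omega_1$, Martin's Axiom applied to $P$ produces a filter $G\subseteq P$ meeting the $\omega_1$ dense sets $D_\alpha$ as well as every $E_n$. Setting $N=\bigcup\{s:(s,F)\in G\text{ for some }F\}$, the $E_n$ ensure that $N$ is infinite, and a standard compatibility argument using $D_\alpha$ shows that $N\setminus N_\alpha$ is finite for each $\alpha$: fixing $(s_0,F_0)\in G\cap D_\alpha$, any $(s,F)\in G$ has a common extension $(s',F')\in G$ satisfying $s\subseteq s'$ and $s'\setminus s_0\subseteq\bigcap_{\beta\in F'}N_\beta\subseteq N_\alpha$, so $N\setminus s_0\subseteq N_\alpha$, as required.

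The main subtlety, and essentially the only place where care is needed, lies in the bookkeeping around the direction of the order on $P$: one must arrange that ``shrinking'' a condition simultaneously enlarges $s$ and may enlarge $F$, so that the generic $N$ is automatically almost contained in every $N_\alpha$ to which the filter has committed, while still preserving compatibility. Modulo this choice, the argument is a transcription of the standard proof that the pseudo-intersection number exceeds $\omega_1$ under MA${+}\neg$CH, specialized here to the tower $(N_\alpha)_{\alpha<\omega_1}$.
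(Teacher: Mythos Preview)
Your argument is correct and is essentially the standard proof that the tower number $\mathfrak{t}$ (indeed the pseudo-intersection number $\mathfrak{p}$) is $\mathfrak{c}$ under MA, specialized to a tower of length $\omega_1$. The paper does not actually prove this lemma: it simply quotes it as a known consequence of MA${+}\neg$CH and refers the reader to Fremlin's monograph \emph{Consequences of Martin's Axiom}. Your $\sigma$-centered poset is exactly the Mathias-type forcing one finds there, so there is nothing to compare---you have supplied the proof the paper chose to outsource.

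One small remark on presentation: in verifying density of $E_n$ you should also treat the case $F=\emptyset$, where the intersection $\bigcap_{\alpha\in F}N_\alpha$ is by convention all of $\N$; this is harmless but worth a word since $\max F$ is undefined there. Otherwise the write-up is clean.
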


This diagonalization property will now be used to prove an ``equi-regularization'' principle for expansion and compression moduli. Let us detail the case of the compression modulus. We first need some preparation. Denote $\mathcal I$ the class of all non decreasing maps $f:\N \to \N\cup \{0\}$ satisfying $f(1)=0$, $\lim_{n\to \infty}f(n)=\infty$ and $f(n+1)\le f(n)+1$ for all $n\in \N$. It will be useful to note that the map $j:\mathcal I \to [\N]^\omega$, defined by $j(f)=\{n\in \N,\ f(n+1)>f(n)\}$ is a bijection, and that its inverse is given by $j^{-1}(A)(n)=\sum_{i<n}\car_A(i)$, for $A\in [\N]^\omega$ and $n\in \N$. We shall also use the following easy fact. If $j(f)=\{m_1<m_2<\cdots\}$ and $j(g)=\{n_1<n_2<\cdots\}$ with $n_i\le m_i$ for all $i\in \N$, then $f\le g$. In particular, if $j(f)\subset j(g)$, then $f\le g$. We start with an easy lemma.

\begin{lem}\label{monotone_rho} Let $(g_\alpha)_{\alpha<\omega_1} \subset \mathcal I$. Then there exists $(f_\alpha)_{\alpha<\omega_1} \subset \mathcal I$ such that
\begin{enumerate}
    \item For all $\alpha<\omega_1$ and all $n\in \N$, $f_\alpha(n)\le g_\alpha(n)$.
    \item For all $\alpha<\beta<\omega_1$, $j(f_\beta)\subset^a j(f_\alpha)$.
\end{enumerate}
\end{lem}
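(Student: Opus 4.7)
The plan is a transfinite induction on $\alpha<\omega_1$, in which we build $f_\alpha\in\cI$ while maintaining the invariants (i) $f_\alpha\le g_\alpha$ pointwise and (ii) $j(f_\delta)\subset^a j(f_\gamma)$ for all $\gamma<\delta\le\alpha$. The decisive observation, already recorded before the statement of the lemma, is that if $j(g)=\{a_1<a_2<\cdots\}$ and $j(f)=\{b_1<b_2<\cdots\}$ with $b_k\ge a_k$ for every $k$, then $f\le g$ pointwise; indeed, $b_k\ge a_k$ forces $|j(f)\cap[1,n]|\le|j(g)\cap[1,n]|$ for every $n$, which is exactly $f(n+1)\le g(n+1)$. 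Consequently, to secure (i) it is enough to choose $j(f_\alpha)\in[\N]^\omega$ whose enumeration dominates that of $j(g_\alpha)$ term by term, while (ii) only asks that $j(f_\alpha)$ be almost contained in each previously constructed $j(f_\gamma)$.

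Set $f_0:=g_0$ for the base case. At a successor stage $\alpha=\delta+1$, enumerate $j(f_\delta)=\{m_1<m_2<\cdots\}$ and $j(g_\alpha)=\{a_1<a_2<\cdots\}$. Since $m_i\to\infty$, one may recursively pick indices $i_1<i_2<\cdots$ with $m_{i_k}\ge a_k$ for every $k$, and declare $j(f_\alpha):=\{m_{i_k}:k\in\N\}$, with $f_\alpha:=j^{-1}(j(f_\alpha))\in\cI$. Then $f_\alpha\le g_\alpha$ by the key observation, and because $j(f_\alpha)\subset j(f_\delta)\subset^a j(f_\gamma)$ for every $\gamma<\delta$, condition (ii) propagates from stage $\delta$ to stage $\alpha$.

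At a countable limit ordinal $\alpha$, fix an increasing sequence $(\gamma_n)_n$ of predecessors cofinal in $\alpha$. A routine ZFC diagonalization produces $B\in[\N]^\omega$ with $B\subset^a j(f_{\gamma_n})$ for every $n$: pick $b_n\in\bigcap_{m\le n}j(f_{\gamma_m})$ with $b_n>b_{n-1}$, possible because each of the finitely many error sets $j(f_{\gamma_n})\setminus j(f_{\gamma_m})$, $m\le n$, is finite while $j(f_{\gamma_n})$ itself is infinite. Since $(j(f_\gamma))_{\gamma<\alpha}$ is almost-decreasing under $\subset^a$, $B$ is almost contained in every $j(f_\gamma)$, $\gamma<\alpha$. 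Now extract from $B$ a further subsequence whose $k$-th element majorizes the $k$-th element of $j(g_\alpha)$ exactly as in the successor step, and take this set as $j(f_\alpha)$; conditions (i) and (ii) follow immediately.

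I expect no real obstacle: the argument is a straightforward transfinite recursion whose only nontrivial ingredient is the standard countable diagonalization performed at limit stages, and thus does not actually appeal to Martin's Axiom or the failure of the Continuum Hypothesis (which are reserved for the applications of the lemma in the rest of the section). The two routine checks that remain are that $j(f_\alpha)$ stays infinite at every stage, so that $f_\alpha$ really lies in $\cI$, and that the term-by-term majorization of enumerations genuinely yields the pointwise inequality $f_\alpha\le g_\alpha$; both were handled in the first paragraph.
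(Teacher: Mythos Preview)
Your proof is correct and follows essentially the same approach as the paper's: a transfinite recursion in which, at each stage $\beta_0$, one first produces a set $M\in[\N]^\omega$ almost contained in all $j(f_\alpha)$, $\alpha<\beta_0$, via the standard countable diagonal argument, and then picks from $M$ a subsequence term-by-term dominating the enumeration of $j(g_{\beta_0})$. The only cosmetic difference is that the paper treats successor and limit stages uniformly (diagonalizing over the countable set $\{\alpha<\beta_0\}$ in both cases), whereas you split them, using $j(f_\delta)$ directly at a successor $\beta_0=\delta+1$ and diagonalizing over a cofinal sequence at a limit.
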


\begin{proof} We shall build $(f_\alpha)_{\alpha<\omega_1}$ by transfinite induction. So, set $f_1=g_1$ and assume that $\beta_0<\omega_1$ is such that we have found $(f_\alpha)_{\alpha<\beta_0}$ satisfying (1) and (2). Since $\{\alpha<\beta_0\}$ is countable, a classical diagonal argument yields the existence of $M\in [\N]^\omega$ such that $M\subset^a j(f_\alpha)$ for all $\alpha<\beta_0$. Let $j(g_{\beta_0})=\{n_1<n_2<\cdots\}$. Then pick $m_1<m_2<\cdots \in M$ so that $n_i\le m_i$ for all $i\in \N$ and set $f_{\beta_0}=j^{-1}(\{m_1,m_2,\ldots\})$. We have that $f_{\beta_0}\le g_{\beta_0}$ and $j(f_{\beta_0})\subset M\subset^a j(f_\alpha)$ for all $\alpha<\beta_0$. This concludes our induction.
\end{proof}

Armed with Lemma \ref{L:} we can now prove our ``equi-regularization" principle below for compression moduli.

\begin{prop}\label{equirho}(MA+$\neg$CH) Let $(\rho_\alpha)_{\alpha<\omega_1}$ be a family of non decreasing maps from $[0,\infty)$ to $[0,\infty)$ and so that $\lim_{t\to \infty}\rho_\alpha(t)=\infty$ for all $\alpha<\omega_1$. Then there exist an uncountable subset $C$ of $\omega_1$ and $\rho:[0,\infty) \to [0,\infty)$ such that $\rho \le \rho_\alpha$ for all $\alpha \in C$ and $\lim_{t\to \infty}\rho(t)=\infty$.
\end{prop}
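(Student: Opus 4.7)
The plan is to discretize the problem into the class $\cI$, apply the author's Lemmas~\ref{monotone_rho} and~\ref{L:}, and then lift the result back to a continuous modulus $\rho$. My first move is to replace each $\rho_\alpha$ by a discrete lower bound $g_\alpha\in\cI$. The natural recursion $g_\alpha(1)=0$ and $g_\alpha(n+1)=\min(g_\alpha(n)+1,\lfloor\rho_\alpha(n+1)\rfloor)$ does the job: a short induction gives $g_\alpha(n)\le\lfloor\rho_\alpha(n)\rfloor$, the conditions $g_\alpha(1)=0$ and $g_\alpha(n+1)\le g_\alpha(n)+1$ are built in, and the hypothesis $\rho_\alpha(n)\to\infty$ forces $g_\alpha(n+1)=g_\alpha(n)+1$ past some integer, so $g_\alpha\to\infty$ and $g_\alpha\in\cI$.

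Next I feed $(g_\alpha)_{\alpha<\omega_1}$ into Lemma~\ref{monotone_rho} to obtain $(f_\alpha)_{\alpha<\omega_1}\subset\cI$ with $f_\alpha\le g_\alpha$ (hence $f_\alpha(n)\le\rho_\alpha(n)$ for $n\in\N$) and $j(f_\beta)\subset^a j(f_\alpha)$ whenever $\alpha<\beta$. This almost-decreasing tower is exactly the hypothesis of Lemma~\ref{L:}, where MA+$\neg$CH is invoked, producing $N\in[\N]^\omega$ with $N\subset^a j(f_\alpha)$ for every $\alpha<\omega_1$. Set $f=j^{-1}(N)\in\cI$ and $k_\alpha=|N\setminus j(f_\alpha)|<\infty$. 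Writing $f(n)=\sum_{i<n}\car_N(i)$ and $f_\alpha(n)=\sum_{i<n}\car_{j(f_\alpha)}(i)$ and subtracting gives the pointwise estimate
\[
f(n)\le f_\alpha(n)+k_\alpha\le\rho_\alpha(n)+k_\alpha\qquad\text{for all }n\in\N.
\]

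A pigeonhole on $(k_\alpha)_{\alpha<\omega_1}$ now selects $K\in\N$ and an uncountable $C\subset\omega_1$ on which $k_\alpha\le K$. I then define $\rho:[0,\infty)\to[0,\infty)$ by $\rho(t)=\max(f(\lfloor t\rfloor)-K,0)$ for $t\ge 1$ and $\rho(t)=0$ for $t\in[0,1)$. This $\rho$ is non-decreasing, tends to $\infty$ because $f$ does, and for $\alpha\in C$ and $t\ge 1$ we obtain $\rho(t)\le f(\lfloor t\rfloor)-K\le\rho_\alpha(\lfloor t\rfloor)\le\rho_\alpha(t)$, while for $t<1$ the inequality $\rho(t)=0\le\rho_\alpha(t)$ is trivial.

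The main obstacle is converting the almost-containment $N\subset^a j(f_\alpha)$, which is intrinsically an asymptotic statement with an $\alpha$-dependent defect $k_\alpha$, into a \emph{uniform} pointwise domination $\rho\le\rho_\alpha$ valid on an uncountable $C$; that is precisely where the pigeonhole on the $k_\alpha$'s and the global shift by $K$ come in. Without MA+$\neg$CH the diagonalization step producing a single $N$ almost contained in every $j(f_\alpha)$ collapses, which is why the hypothesis is unavoidable in this approach.
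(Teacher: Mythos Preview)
Your proof is correct and follows essentially the same route as the paper: discretize each $\rho_\alpha$ into $\mathcal I$, apply Lemma~\ref{monotone_rho} and then Lemma~\ref{L:} (the MA+$\neg$CH step), and finish with a countable pigeonhole to convert almost-containment into a uniform bound. The only cosmetic difference is that the paper pigeonholes on the integer $n_0$ past which $M\cap[n_0,\infty)\subset j(f_\alpha)$ and then \emph{truncates} $M$, whereas you pigeonhole on $k_\alpha=|N\setminus j(f_\alpha)|$ and \emph{shift} $f$ down by $K$; these are interchangeable bookkeeping choices. (Two tiny slips worth cleaning up: the claim ``$g_\alpha(n+1)=g_\alpha(n)+1$ past some integer'' is false in general---think $\rho_\alpha(t)=\sqrt t$---but $g_\alpha\to\infty$ follows anyway by the obvious boundedness contradiction; and the chain ``$\rho(t)\le f(\lfloor t\rfloor)-K$'' should be split into the two cases $f(\lfloor t\rfloor)\ge K$ and $f(\lfloor t\rfloor)<K$.)
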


\begin{proof} First, note that for all $\alpha<\omega_1$, we can find $g_\alpha \in \mathcal I$ such that $g_\alpha(n)\le \rho_\alpha(n)$, for all $n\in \N$. Then consider the family $(f_\alpha)_{\alpha<\omega_1}$ associated to $(g_\alpha)_{\alpha<\omega_1}$ through Lemma \ref{monotone_rho}. Next, we apply Lemma \ref{L:} to get $M\in [\N]^\omega$ such that $M\subset^a j(f_\alpha)$ for all $\alpha <\omega_1$. For $n\in \N$, denote
$$C_n=\big\{\alpha<\omega_1,\ M\cap\{n,n+1,\ldots\} \subset j(f_\alpha)\big\}.$$
Clearly, there exists $n_0\in \N$ such that $C_{n_0}$ is uncountable. We set $C=C_{n_0}$ and define $f=j^{-1}(M\cap \{n_0,n_0+1,\ldots\})\in \mathcal I$. Then, for all $\alpha \in C$, we have $j(f)\subset j(f_\alpha)$ and therefore $f\le f_\alpha$. Finally,  $\rho$ defined by $\rho=0$ on $[0,1)$ and $\rho=f(n)$ on $[n,n+1)$, for $n\in \N$, is the desired map.
\end{proof}

Similarly, for expansion moduli, we have.

\begin{prop}\label{equiomega}(MA+$\neg$CH) Let $(\omega_\alpha)_{\alpha<\omega_1}$ be a family of non decreasing maps from $[0,\infty)$ to $[0,\infty)$ and so that $\omega_\alpha(0)=0$. Then there exist an uncountable subset $C$ of $\omega_1$ and $\omega:[0,\infty) \to [0,\infty)$ such that $\omega \ge \omega_\alpha$ for all $\alpha \in C$.
\end{prop}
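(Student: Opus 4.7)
The plan is to deduce Proposition \ref{equiomega} from Proposition \ref{equirho} via the natural monotone inversion, after separating out the bounded members of the family.

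If the set $\{\alpha<\omega_1:\omega_\alpha\text{ is bounded}\}$ is uncountable, then pigeonholing the values $\sup_{t\geq 0}\omega_\alpha(t)$ into the integer intervals $[k,k+1)$, $k\in\N$, produces an uncountable $C\subset\omega_1$ and a constant $K\in\N$ such that $\omega_\alpha(t)\leq K$ for all $\alpha\in C$ and $t\geq 0$; the constant function $\omega\equiv K$ then works. Otherwise, after discarding at most countably many indices, we may assume every $\omega_\alpha$ is unbounded, and proceed to the main case.

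For each such $\alpha$, I would introduce the left-continuous generalized inverse
\[
\rho_\alpha(t)=\inf\{s\geq 0:\omega_\alpha(s)\geq t\},\qquad t\geq 0,
\]
which is non-decreasing, satisfies $\rho_\alpha(0)=0$, and is finite-valued by the unboundedness of $\omega_\alpha$. Moreover $\rho_\alpha(t)\to\infty$ as $t\to\infty$, since for any $N\in\N$ and any $t>\omega_\alpha(N)$ one has $\omega_\alpha(s)\leq\omega_\alpha(N)<t$ for all $s<N$, whence $\rho_\alpha(t)\geq N$. Thus $(\rho_\alpha)_{\alpha<\omega_1}$ meets the hypotheses of Proposition \ref{equirho}, yielding an uncountable $C\subset\omega_1$ and a non-decreasing $\rho:[0,\infty)\to[0,\infty)$ with $\rho(t)\to\infty$ and $\rho\leq\rho_\alpha$ on $C$. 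I would then invert back by setting
\[
\omega(t)=\sup\{s\geq 0:\rho(s)\leq t\},\qquad t\geq 0,
\]
which is finite (because $\rho\to\infty$), non-negative, and non-decreasing. Since the left-continuous inverse satisfies $\rho_\alpha(\omega_\alpha(t))\leq t$ (as $t$ itself lies in $\{s:\omega_\alpha(s)\geq\omega_\alpha(t)\}$), for $\alpha\in C$ and $t\geq 0$ we have $\rho(\omega_\alpha(t))\leq\rho_\alpha(\omega_\alpha(t))\leq t$, so $\omega_\alpha(t)$ belongs to $\{s:\rho(s)\leq t\}$ and therefore $\omega(t)\geq\omega_\alpha(t)$, as required.

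The delicate point, and where I expect the main care, is the choice of variant of the inverse: the non-strict definition $\inf\{s:\omega_\alpha(s)\geq t\}$ is essential in order to force $\rho_\alpha(\omega_\alpha(t))\leq t$ even at a jump of $\omega_\alpha$ at $t$; the strict variant $\inf\{s:\omega_\alpha(s)>t\}$ would only produce the weaker conclusion $\omega(t)\geq\omega_\alpha(t^-)$. Reducing Proposition \ref{equiomega} directly to Lemma \ref{L:} in the style of Proposition \ref{equirho} appears unnatural, because Lemma \ref{L:} is inherently a ``diagonalization from below'' of an almost-descending tower; the inversion trick packages the required MA+$\neg$CH input by pulling it out of Proposition \ref{equirho} itself.
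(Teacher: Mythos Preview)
Your proof is correct and takes a genuinely different route from the paper's. The paper does \emph{not} reduce to Proposition~\ref{equirho}; instead it reruns the same machinery in a dual form. It introduces the class $\mathcal{J}$ of functions $f:\N\cup\{0\}\to\N\cup\{0\}$ with $f(0)=0$ and $f(n+1)\ge f(n)+1$, encodes such an $f$ by its range $f(\N)\in[\N]^\omega$, majorizes each $\omega_\alpha$ on the integers by some $g_\alpha\in\mathcal{J}$, and then replays Lemma~\ref{monotone_rho} and Lemma~\ref{L:} with these range-sets. The reversal of the order (finding an upper bound rather than a lower bound) is absorbed by the passage from jump-sets to ranges.

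Your inversion trick is elegant: it packages the MA+$\neg$CH input entirely inside the already-proved Proposition~\ref{equirho}, and the delicate choice of the non-strict inverse is exactly what makes $\rho_\alpha(\omega_\alpha(t))\le t$ hold at jumps. One small caveat: Proposition~\ref{equirho} as stated does not promise that the output $\rho$ is non-decreasing, though its proof does produce one; if you wanted to be self-contained you could simply replace $\rho$ by $t\mapsto\inf_{s\ge t}\rho(s)$, which preserves $\rho\le\rho_\alpha$ and $\rho\to\infty$ --- but in fact your definition of $\omega$ as a supremum does not actually require monotonicity of $\rho$, only that $\rho\to\infty$.

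Your closing remark, however, is off the mark: the direct reduction to Lemma~\ref{L:} in the style of Proposition~\ref{equirho} is not unnatural at all --- it is precisely what the paper does, and it works smoothly once one sees that encoding by ranges rather than by jump-sets flips the direction of the comparison.
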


\begin{proof}  The argument is very similar. Let us just describe the few adjustments. We now consider the class $\mathcal J$ of all functions $f:\N\cup\{0\} \to \N\cup\{0\}$ such that $f(0)=0$ and $f(n+1)\ge f(n)+1$ for all $n\ge 0$. The map $k:\mathcal J \to [\N]^\omega$ defined by $k(f)=k(\N)$ is a bijection.
Then, for every $\alpha<\omega_1$, there exists $g_\alpha \in \mathcal J$ such that $g_\alpha(n)\ge \omega_\alpha(n)$ for all $n\in \N$. Playing the same game as before, but with the sets $k(g_\alpha)$ instead of $j(g_\alpha)$, we obtain (under MA+$\neg$CH) the existence of an uncountable subset $C$ of $\omega_1$ and of $g\in \mathcal J$ such that $g\ge g_\alpha$ for all $\alpha \in C$. The proof is then concluded by setting $\omega(0)=0$ and $\omega=g(n)$ on $(n-1,n]$, for $n\in \N$.
\end{proof}

From Propositions \ref{equirho} and \ref{equiomega}, we deduce immediately.

\begin{prop}(MA+$\neg$CH)\label{equicoarse}
If $(X_\alpha, d_\alpha)_{\alpha<\omega_1}$ is a collection of metric spaces such that for all $\alpha<\omega_1$, $X_\alpha$ embeds coarsely into a metric space $(M,d)$, then there exists an uncountable subset $C$ of $\omega_1$ such that $(X_\alpha, d_\alpha)_{\alpha\in C}$ embeds equi-coarsely into $(M,d)$.
\end{prop}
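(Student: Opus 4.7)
The plan is to extract a pair $(\rho,\omega)$ that works equi-coarsely on an uncountable sub-family, by applying Proposition~\ref{equirho} to harvest a uniform lower modulus and then Proposition~\ref{equiomega} to harvest a uniform upper modulus, passing to a further uncountable subset in the second step.

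First, for each $\alpha<\omega_1$ fix a coarse embedding $f_\alpha\colon X_\alpha\to M$. Define its canonical moduli
\[
\rho_\alpha(t)=\inf\{d(f_\alpha(x),f_\alpha(y))\colon d_\alpha(x,y)\ge t\},\qquad \omega_\alpha(t)=\sup\{d(f_\alpha(x),f_\alpha(y))\colon d_\alpha(x,y)\le t\}.
\]
Both are non-decreasing $[0,\infty)\to[0,\infty)$ functions, $\omega_\alpha(0)=0$, $\lim_{t\to\infty}\rho_\alpha(t)=\infty$, and by construction $\rho_\alpha(d_\alpha(x,y))\le d(f_\alpha(x),f_\alpha(y))\le \omega_\alpha(d_\alpha(x,y))$ for all $x,y\in X_\alpha$.

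Apply Proposition~\ref{equirho} to the family $(\rho_\alpha)_{\alpha<\omega_1}$ to obtain an uncountable $C_1\subset\omega_1$ and a non-decreasing $\rho\colon[0,\infty)\to[0,\infty)$ with $\lim_{t\to\infty}\rho(t)=\infty$ and $\rho\le\rho_\alpha$ for every $\alpha\in C_1$. Since $C_1$ has cardinality $\aleph_1$, fix an order-preserving bijection between $C_1$ and $\omega_1$; this re-indexes $(\omega_\alpha)_{\alpha\in C_1}$ as a family indexed by $\omega_1$, to which Proposition~\ref{equiomega} applies. We obtain an uncountable $C\subset C_1$ and a non-decreasing $\omega\colon[0,\infty)\to[0,\infty)$ such that $\omega\ge\omega_\alpha$ for every $\alpha\in C$.

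For every $\alpha\in C$ and all $x,y\in X_\alpha$ we then have
\[
\rho(d_\alpha(x,y))\le \rho_\alpha(d_\alpha(x,y))\le d(f_\alpha(x),f_\alpha(y))\le \omega_\alpha(d_\alpha(x,y))\le \omega(d_\alpha(x,y)),
\]
so the family $(X_\alpha,d_\alpha)_{\alpha\in C}$ $(\rho,\omega)$-embeds into $(M,d)$ and hence embeds equi-coarsely into $(M,d)$. The only subtlety is the re-indexing step required to legitimately feed $(\omega_\alpha)_{\alpha\in C_1}$ into Proposition~\ref{equiomega}, and since no cardinality is lost in passing from $\omega_1$ to an uncountable subset, this presents no obstacle; the entire argument is simply a two-step regularization, with all of the genuine work (the MA+$\neg$CH diagonalization) already carried out in Lemmas/Propositions established above.
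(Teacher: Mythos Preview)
Your proof is correct and is exactly the argument the paper has in mind: the authors simply state that the result follows ``immediately'' from Propositions~\ref{equirho} and~\ref{equiomega}, and your two-step regularization (first on the compression moduli, then on the expansion moduli after re-indexing the resulting uncountable set by $\omega_1$) is precisely how one spells this out. The only cosmetic point is that the canonical compression modulus $\rho_\alpha$ as you defined it may take the value $+\infty$ when $X_\alpha$ is bounded (infimum over an empty set), so strictly speaking one should replace it by, say, $\min(\rho_\alpha(t),t)$ to land in $[0,\infty)$; this changes nothing in the argument.
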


\begin{proof}[Proof of Theorem \ref{T:B}] The argument goes along essentially the same lines as the proof of Theorem \ref{T:A}, modulo the fact that we have to work with vines defined in terms of the compression and expansion moduli. Let us outline the main steps and the place where (MA+$\neg$CH) is used.

Let $\rho,\omega$ be two elements of the class $\mathcal F$ of all non decreasing functions from $[0,\infty)\to [0,\infty)$ that are vanishing at $0$ and tending to $\infty$ at $\infty$. Let also $(M,d)$ be a complete separable metric space. Then, we define
\[
\begin{split}
\mathcal{V}(M,\mathbb{Z},\rho,\omega) = \Big\{(x_f)_{f\in [\mathbb{Z},G]}:&\; G\in[\mathbb{N}]^{<\omega}, x_f\in M \text{ for }f\in[\mathbb{E},G],\text{ and for }f,g\in[\mathbb{E},G]\\
&\text{ we have }\rho(\|f-g\|_\infty) \leq d(x_f,x_g)\leq \omega(\|f-g\|_\infty)\Big\},
\end{split}
\]
and the coarse $c_0$-index of $M$ as
$$\mathrm{I}^{\text{coarse}}_{\co}(M)=\sup \big\{o(\mathcal{V}(M,\mathbb{Z},\rho,\omega)\colon \rho,\omega \in \mathcal F\big\}.$$

The next step is to prove the analogue of Proposition \ref{Lipschitzindex}: $c_0$ coarsely embeds into $M$ if and only if $\mathrm{I}^{\text{coarse}}_{\co}(M)\ge \omega_1$. For the non trivial implication, the pigeonhole argument yielding a uniform constant $C$ is replaced by Propositions \ref{equirho} and \ref{equiomega} to prove the existence of $\rho, \omega \in \mathcal F$ such that $\mathcal{V}(M,\mathbb{Z},\rho,\omega)$ is not well founded (this is where (MA+$\neg$CH) is used). Then it implies the existence of a coarse embedding of the integer grid of $c_0$ (and therefore of $c_0$) into $M$.

Finally, assume that a separable metric space $(M,d)$, that we may assume to be complete, contains a coarse copy of all spaces $(\mathrm{S}_\alpha(\Z),d_\infty)$, for $\alpha <\omega_1$. As in the proof of Theorem \ref{T:A}, this implies that $\mathrm{I}^{\text{coarse}}_{\co}(M)\ge \omega_1$, which finishes the proof.
\end{proof}

\begin{remark} We recall that a metric space $X$ \emph{coarse-Lipschitz} embeds into a metric space $Y$ if $X$  $(\rho,\omega)$-embeds into $Y$ where, for all $t\ge 0$, $\rho(t)=At-B$ and $\omega(t)=Ct+D$ for some constants $A,B,C,D>0$. It follows clearly from the tools and arguments developed in the last two subsections that we have,  without assuming any further set theoretical  axioms, the following statement: a separable metric space containing coarse-Lipschitz all the spaces $(\mathrm{S}_\alpha(\Z),d_\infty)$, for $\alpha<\omega_1$, must a contain a coarse-Lipschitz copy of $c_0$.
\end{remark}

It is natural to wonder if Theorem \ref{T:B} holds without MA+$\neg$CH.

\begin{prob}
If a separable metric space contains coarse copies of $(\mathrm{S}_\alpha(\Z),d_\infty)$ for every countable ordinal $\alpha$, is it coarsely universal for the class of all separable metric spaces?
\end{prob}

We discuss some positive partial results in Section \ref{sec:4}.

\subsection{Coarse universality via strong boundedness}\label{S:2.3}
While we do not know how to prove Theorem \ref{T:B}  without further set axioms, we can prove Theorem \ref{T:C}. Recall that the canonical embedding of $(\mathrm{S}_\alpha(\mathbb{Z}),d_\infty)$ in $T_\alpha^*$ is a $4$-Lipschitz isomorphism onto its image, and thus the stronger assumption that the metric space contains every separable reflexive asymptotic-$\co$ space, rather than merely the collection of metric spaces $(\mathrm{S}_\alpha(\Z))_{\alpha<\omega_1}$, allows us to take advantage of the deep theory of strongly bounded classes of Banach spaces introduced by Argyros and Dodos \cite{ArgyrosDodos}. A class $\mathcal{C}$ of separable Banach spaces is said to be \emph{strongly
bounded} if for every analytic subset $A$ of $\mathcal{C}$, there exists $Y\in\mathcal{C}$ that contains isomorphic copies of every $X\in A$. Recall also that an infinite-dimensional Banach space $X$ is said to be \emph{minimal} if $X$ isomorphically embeds into every infinite-dimensional subspace of itself (e.g. the classical sequence space $\co$ is minimal). We will need the following deep result of Dodos \cite[Theorem 7]{Dodos2009}.

\begin{thm}\label{T:3.8}
For any infinite-dimensional minimal Banach space $Z$ not containing $\ell_1$, the class
$$\mathsf{NC}_Z:=\{Y\in \mathsf{SB}\colon Z \text{ does not linearly embed into  }Y\}$$ is strongly bounded.
\end{thm}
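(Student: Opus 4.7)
Since this is cited as a deep result of Dodos \cite{Dodos2009}, the proof in the paper will be a reference rather than an argument, but a natural sketch follows the Argyros--Dodos framework for constructing universal spaces for analytic classes of separable Banach spaces.

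Given an analytic subset $A \subset \mathsf{NC}_Z$ in the Effros--Borel structure on $\mathsf{SB}$, the plan is first to code $A$ by a \emph{Schauder tree basis}: using that $A$ is a continuous image of $\mathbb{N}^{\mathbb{N}}$, together with a parameterized selection of Kuratowski--Ryll-Nardzewski type, one produces a tree $T \subset \mathbb{N}^{<\omega}$ and a system of vectors $(x_t)_{t\in T}$ in a fixed ambient space so that along every branch $\sigma$ of $T$ the sequence $(x_{\sigma|n})_n$ is basic, and for every $X\in A$ there is a branch $\sigma$ such that $\overline{\mathrm{span}}\,(x_{\sigma|n})_n$ is isomorphic to a subspace of $X$ capturing $X$ up to isomorphism.

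The second step is to amalgamate the branch spaces into a single separable Banach space $Y$. The natural candidate, developed by Argyros--Dodos and refined in subsequent work, is the completion of $\mathrm{span}\{x_t : t\in T\}$ under an $\ell_p$-type interpolation norm that sums contributions along the branches of $T$. By construction every $X\in A$ embeds isomorphically into $Y$, so the universality half of strong boundedness is automatic; the hard content is to arrange the interpolation so that $Y$ itself still lies in $\mathsf{NC}_Z$.

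The decisive step is to show $Z\not\hookrightarrow Y$. Suppose for contradiction $Z$ embeds into $Y$. Since $Z$ contains no $\ell_1$, Rosenthal's $\ell_1$ theorem allows one to pass to a weakly Cauchy subsequence in the image of $Z$ and then to a block basis with respect to the tree basis $(x_t)_{t\in T}$ whose closed span still contains a copy of $Z$ (invoking minimality to propagate from any infinite-dimensional subspace back to $Z$). The crux is then a combinatorial argument on $T$ showing that such a block sequence must essentially concentrate along a single branch $\sigma$, which would place a copy of $Z$ inside the corresponding branch space, hence inside some $X\in A$, contradicting $X\in\mathsf{NC}_Z$. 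This concentration step is the main obstacle and is what makes Dodos's theorem genuinely deep: it is precisely where the design of the interpolation norm, the absence of $\ell_1$ in $Z$, and the minimality of $Z$ have to be orchestrated simultaneously.
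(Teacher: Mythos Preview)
You correctly anticipated the situation: the paper does not prove Theorem~\ref{T:3.8} at all but simply invokes it as \cite[Theorem~7]{Dodos2009}, and your opening sentence acknowledges this. Your sketch of the Argyros--Dodos machinery (Schauder tree basis coding an analytic class, amalgamation via an $\ell_2$-Baire-sum-type construction, and a concentration/dichotomy argument exploiting minimality and the absence of $\ell_1$) is a fair high-level account of the strategy in \cite{Dodos2009} and \cite{ArgyrosDodos}; the specific amalgamation Dodos uses is the $\ell_2$-Baire sum rather than a general ``$\ell_p$-type interpolation,'' and the concentration step relies on a structural dichotomy for block sequences in such sums, but these are refinements of terminology rather than gaps in your outline.
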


\begin{proof}[Proof of Theorem \ref{T:C}] Denote $\mathsf{R}$ the set of all reflexive elements of  $\mathsf{SB}$ and $\mathsf{As}_{c_0}$ the set of all  elements of $\mathsf{SB}$ that are asymptotic-$c_0$.
Let now $M$ be a separable metric space such that every space in
$\mathsf{R}\cap \mathsf{As}_{c_0}$ coarsely embeds into $M$. If we denote
$\mathsf{CE}_M=\{Y\in\mathsf{SB}:Y$ coarsely embeds into $M\}$, we have that $\mathsf{R}\cap \mathsf{As}_{c_0}\subset \mathsf{CE}_M$. It is easily
checked that $\mathsf{CE}_M$ is analytic (see the proof of Theorem
1.7 - section 7.1 in \cite{Braga2019}). Recall that we denoted $\mathsf{NC}_{\co}$ the set of all $Y\in \mathsf{SB}$ such that $c_0$ does not linearly embed into $Y$. If we assume, aiming for a contradiction that $\mathsf{CE}_M \subset \mathsf{NC}_{\co}$, since $\mathsf{CE}_M$ is an analytic subset of $\mathsf{NC}_{\co}$, which is strongly bounded by Theorem \ref{T:3.8}, there would exist $X \in \mathsf{NC}_{\co}$ such that any element of $\mathsf{CE}_M$, and therefore any element of $\mathsf{R}\cap \mathsf{As}_{c_0}$, linearly embeds into $X$. This is actually impossible since Bourgain's $\co$-index of the separable, reflexive and asymptotic-$c_0$ space $T^*_\alpha$ tends to $\omega_1$ as $\alpha$ tends to $\omega_1$ (see \cite{OdellSchlumprechtZsak2007}). Therefore Bourgain's $\co$-index of $X$ would be uncountable and $X$ would contain an isomorphic copy of $\co$; a contradiction with $X\in \mathsf{NC}_{\co}$. So we can now deduce the existence of $Y\in \mathsf{CE_M}$ such that $c_0$ linearly embeds into $Y$, and hence by composition $\co$ coarsely embeds into $M$. Since by a theorem of Aharoni \cite{Aharoni1974}, every separable metric space bi-Lipschitzly embeds into $c_0$, every separable metric space coarsely embeds into $M$. This concludes our proof.
\end{proof}

\begin{remarks} The same technique was used by B. de Mendon\c{c}a Braga in \cite{Braga2019} to prove that a Banach space which is coarsely universal for all reflexive separable Banach spaces is coarsely universal for all separable metric spaces.

The reader will easily adapt the above proof to show that a Banach  space that is Lipschitz universal for $\mathsf{R}\cap \mathsf{As}_{c_0}$ is Lipschitz universal for all separable metric spaces. But this was also a consequence of Theorem \ref{T:A}.

\end{remarks}

\section{Coarse universality and barycentric gluing}
\label{sec:4}
The motivation for this section is to provide a somewhat weaker statement than Theorem $\ref{T:B}$,  that does not require MA+$\neg$CH. We will show that containing coarse copies of the Schreier metric spaces $(\mathrm{S}_\alpha(\Z),d_\infty)$ for every $\alpha<\omega_1$, is a sufficient condition to equi-coarsely contain every separable \emph{bounded} metric spaces. The reason we have the boundedness restriction is because  without MA+$\neg$CH we only have the following equi-regularization principle (which is weaker than the equi-regularization principle obtained under MA+$\neg$CH).

\begin{lem}
\label{partially equi-coarse}
Let $C_0$ be an uncountable subset of $\omega_1$. Assume that  for each $\alpha\in C_0$, we have increasing functions $\rho_\alpha,\omega_\alpha:[0,\infty)\to[0,\infty)$ so that $\rho_\alpha(t)\leq \omega_\alpha(t)$ for all $t\in[0,\infty)$ and $\lim_{t\to\infty}\rho_\alpha(t)=\infty$. Then there exist increasing functions $\rho,\omega:[0,\infty)\to[0,\infty)$ and a decreasing nested sequence $(C_k)_{k\in\N}$ of uncountable subsets of $\omega_1$ so that
\begin{itemize}

\item[(i)] $\rho(t)\kleq \omega(t)$ for all $t\kin[0,\infty)$, and  $\lim_{t\to\infty}\rho(t)=\infty$,

\item[(ii)] for all $k\in\mathbb{N}$, $\alpha\in C_k$, and $0\leq t\leq k$ we have $\rho(t) \leq \rho_\alpha(t)$ and $\omega_\alpha(t)\leq \omega(t)$.
\end{itemize}
\end{lem}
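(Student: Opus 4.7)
The plan is to build the sets $C_k$ and the functions $\rho,\omega$ by iterative pigeonholing on $\omega_1$, where the key is to choose the scales at which we refine in an \emph{adaptive} way so as to force $\rho(t)\to\infty$. A naive scale-by-scale pigeonhole only supplies a possibly bounded lower envelope for $\rho_\alpha(k)$; one must instead grow the scale far enough at each step that uncountably many $\alpha$ have already reached the next desired value of $\rho$.

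The elementary tool to be invoked repeatedly is: if $B\subset\omega_1$ is uncountable and $n\in\N$, then $B=\bigcup_{K\in\N}\{\alpha\in B:\rho_\alpha(K)\geq n\}$ by the hypothesis $\rho_\alpha(t)\to\infty$, so some uncountable subset $B'\subset B$ and some $K\in\N$ satisfy $\rho_\alpha(K)\geq n$ for all $\alpha\in B'$. Dually, for any fixed $k$, partitioning $[0,\infty)$ into the countably many intervals $[m,m+1)$ and pigeonholing $\omega_\alpha(k)$ yields an uncountable subset on which $\omega_\alpha(k)\leq W_k$ for some integer $W_k$.

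Using these two moves I would construct by induction on $n$ a strictly increasing sequence $0=K_0<K_1<K_2<\cdots$ in $\N$, positive integers $W_1,W_2,\ldots$, and a decreasing nested sequence of uncountable sets $C_0=B_0\supset B_1\supset B_2\supset\cdots$ such that every $\alpha\in B_n$ satisfies
\[
\rho_\alpha(K_n)\geq n\quad\text{and}\quad \omega_\alpha(k)\leq W_k \text{ for all } 1\leq k\leq K_n.
\]
At step $n$, the first tool locates $K_n>K_{n-1}$ and an uncountable $B'\subset B_{n-1}$ with $\rho_\alpha(K_n)\geq n$ on $B'$; then applying the second tool successively to $k=K_{n-1}+1,\ldots,K_n$ produces $B_n\subset B'$ uncountable, together with integers $W_{K_{n-1}+1},\ldots,W_{K_n}$.

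Finally I would define $\rho(t)=n$ for $K_n\leq t<K_{n+1}$ (with $K_0=0$), set $\omega(t)=\max\{\rho(t),W_1,\ldots,W_{\lceil t\rceil}\}$ to make $\omega$ non-decreasing and dominate $\rho$, and for $k\in\N$ put $n^*(k)=\min\{n:K_n\geq k\}$ and $C_k=B_{n^*(k)}$. Property (i) is then immediate, and $(C_k)$ is decreasing and uncountable since $n^*$ is non-decreasing and each $B_n$ is uncountable. For property (ii) with $\alpha\in C_k=B_{n^*(k)}$ and $0\leq t\leq k\leq K_{n^*(k)}$: writing $n$ for the largest integer with $K_n\leq t$, the inclusion $B_{n^*(k)}\subset B_n$ together with monotonicity of $\rho_\alpha$ gives $\rho_\alpha(t)\geq\rho_\alpha(K_n)\geq n=\rho(t)$, while $\lceil t\rceil\leq K_{n^*(k)}$ with monotonicity of $\omega_\alpha$ gives $\omega_\alpha(t)\leq\omega_\alpha(\lceil t\rceil)\leq W_{\lceil t\rceil}\leq\omega(t)$. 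The main subtlety throughout is the adaptive choice of $K_n$, which is exactly what permits $\rho$ to escape to infinity across the nested hierarchy; this is also the precise place where a uniform (scale-independent) refinement would require the diagonalisation provided by MA+$\neg$CH in Propositions \ref{equirho} and \ref{equiomega}.
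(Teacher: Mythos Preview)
Your argument is correct and, in my view, cleaner than the paper's. Both proofs rest on iterated countable pigeonholing inside an uncountable subset of $\omega_1$ followed by a step-function definition of $\rho$ and $\omega$, but the bookkeeping differs. The paper stabilises the ``inverse'' quantities $s(\alpha,k)=\min\{t\in\N:\rho_\alpha(t)\ge k\}$ and $t(\alpha,k)=\max\{t\in\N:\omega_\alpha(t)\le k\}$ (together with auxiliary indices $M(\alpha,k),N(\alpha,k)$ needed only to argue that the stabilised values $s_k,t_k$ are unbounded), then passes to a subsequence along which both are strictly increasing before defining $\rho$ and $\omega$. You bypass all of that by choosing the scales $K_n$ adaptively so that $\rho_\alpha(K_n)\ge n$ already holds on $B_n$; the unboundedness of $\rho$ is then built in rather than proved after the fact, and the $\omega$-side reduces to a plain pointwise pigeonhole at each integer $\le K_n$. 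This is a genuine simplification.

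One small slip to patch: your formula $\omega(t)=\max\{\rho(t),W_1,\dots,W_{\lceil t\rceil}\}$ gives $\omega(0)=\rho(0)=0$, but nothing in the hypotheses forces $\omega_\alpha(0)=0$, so (ii) can fail at $t=0$. The fix is trivial: either pigeonhole once more at the outset to obtain an uncountable $B_0'\subset C_0$ and an integer $W_0$ with $\omega_\alpha(0)\le W_0$ for $\alpha\in B_0'$, and include $W_0$ in the maximum; or simply replace $\lceil t\rceil$ by $\max(1,\lceil t\rceil)$ and use $\omega_\alpha(0)\le\omega_\alpha(1)\le W_1$.
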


\begin{proof}
For all $k\in\mathbb{N}$ and $\alpha\in C_0$ define
\[s(\alpha,k) = \min\{t\in\mathbb{N}: \rho_\alpha(t)\geq k\},\quad t(\alpha,k) = \max\{t\in\mathbb{N}: \omega_\alpha(t)\leq k\}\]
and also define
\[M(\alpha,k) = \min\{n\in\N: s(\alpha,k)<s(\alpha,n)\},\quad N(\alpha,k) = \min\{n\in\N: t(\alpha,k)<t(\alpha,n)\}.\]

Because, for each fixed $k\in\mathbb{N}$, the sets $\{s(\alpha,k):\alpha\in C_0\}$, $\{t(\alpha,k):\alpha\in C_0\}$,  $\{M(\alpha,k):\alpha\in C_0\}$, $\{N(\alpha,k):\alpha\in C_0\}$ are all countable we may find uncountable sets $C_0\supset C_1\supset C_2\supset\cdots$ so that for each $k\in\mathbb{N}$ and $\alpha,\beta\in C_k$ we have $s(\alpha,k) = s(\beta,k) = s_k$, $t(\alpha,k) = t(\beta,k) = t_k$, $M(\alpha,k) = M(\beta,k) = M_k$, and $N(\alpha,k) = N(\beta_k) = N_k$. Clearly, we have  $s_k\leq s_{k+1}$ and $t_k\leq t_{k+1}$,  for all $k\in\N$. We also observe that $\lim_ks_k = \lim_kt_k = \infty$. Indeed, it is easy to see that $s_k < s_{M_k}$ and $t_k < t_{N_k}$.  Pick $k_1<k_2<\cdots$ so that $(s_{k_j})_j$ and $(t_{k_j})_j$ are both strictly increasing.

We now define $\rho,\tilde\omega:[0,\infty)\to[0,\infty)$ as follows.
\[
\rho(t) =
\left\{
	\begin{array}{ll}
		0  & \mbox{if } 0\leq t <s_{k_1}\\
		k_j & \mbox{if } s_{k_j}\leq t < s_{k_{j+1}}, j\in\mathbb{N}
	\end{array}
\right.
,
\tilde\omega(t) =
\left\{
	\begin{array}{ll}
		k_1  & \mbox{if } 0\leq t \leq t_{k_1}\\
		k_j & \mbox{if } t_{k_{j-1}} < t \leq t_{k_{j}}, j\geq 2
	\end{array}
\right.
\]
and $\omega(t) = \rho(t)\vee\tilde\omega(t)$. The conclusion follows straightforwardly after observing that $s_{k_j},t_{k_j} \geq j$ for all $j\in\mathbb{N}$.
\end{proof}

Using the concept of vines introduced in Section \ref{S:2.2} in the coarse context we now deduce the following.

\begin{thm}
\label{T:4.2}
Let $(M,d)$ be a separable metric space and assume that for every $\alpha<\omega_1$ the metric space $(\mathrm{S}_\alpha(\mathbb{Z}),d_\infty)$ embeds coarsely into $(M,d)$. Then the class of all separable bounded metric spaces embeds equi-coarsely into $(M,d)$.

\noindent More precisely, there exist $m_0\in M$ and equi-coarse embeddings $F_n: B_n = \{x\in c_0: \|x\|_\infty\leq n\}\to (M,d)$ so that for all $n\in\mathbb{N}$ we have $F_n(0) = m_0$.
\end{thm}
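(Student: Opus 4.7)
The plan is to mimic the proof of Theorem \ref{T:B} but to work one ball $B_n$ at a time, compensating for the absence of MA+$\neg$CH by exploiting that Lemma \ref{partially equi-coarse} already delivers uniform moduli on bounded intervals. The key observation is that, if we restrict coefficients to $\E_n := \Z \cap [-n,n]$, all differences $\|f-g\|_\infty$ are capped by $2n$, so matching the scale $k=2n$ in Lemma \ref{partially equi-coarse} upgrades its partial regularization into an honest coarse embedding of the integer grid of $B_n$.

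\textbf{Set-up and reductions.} I would first replace $M$ by its completion (and perturb the resulting maps by bounded amounts at the end so that they land back in $M$), which lets me assume $M$ is Polish. By Aharoni's theorem every separable metric space of diameter at most $2n$ bi-Lipschitzly embeds into $B_{4n}$ with uniformly bounded distortion, so it is enough to construct equi-coarse embeddings $F_n\colon B_n\to M$ sharing a common base point $m_0\in M$. For each $\alpha<\omega_1$ I fix a coarse embedding $G_\alpha\colon (\mathrm{S}_\alpha(\Z),d_\infty)\to M$ with moduli $(\rho_\alpha,\omega_\alpha)$ and apply Lemma \ref{partially equi-coarse} to obtain global moduli $(\rho,\omega)$ and a nested sequence $(C_k)_k$ of uncountable subsets of $\omega_1$ satisfying (i)--(ii).

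\textbf{Vine construction.} Fix $n\in\N$ and consider the closed $\E_n$-vine over $M$
$$\mathcal{V}_n := \mathcal{V}(M,\E_n,\rho,\omega) = \Big\{(x_f)_{f\in[\E_n,G]}:\ G\in[\N]^{<\omega},\ \rho(\|f-g\|_\infty)\le d(x_f,x_g)\le \omega(\|f-g\|_\infty)\ \forall f,g\in[\E_n,G]\Big\},$$
defined in exact analogy with the coarse vine of Section \ref{S:2.2}. For every $\alpha\in C_{2n}$ and $G\in\mathrm{S}_\alpha$, any $f,g\in[\E_n,G]$ satisfy $\|f-g\|_\infty\le 2n$, so Lemma \ref{partially equi-coarse}(ii) gives $\rho(\|f-g\|_\infty)\le\rho_\alpha(\|f-g\|_\infty)\le d(G_\alpha(f),G_\alpha(g))\le\omega_\alpha(\|f-g\|_\infty)\le\omega(\|f-g\|_\infty)$. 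Thus the Schreier sub-vine $\{(G_\alpha(f))_{f\in[\E_n,G]}:\ G\in\mathrm{S}_\alpha\}$ lies in $\mathcal{V}_n$, and its vine ordinal index coincides with the tree ordinal index $o(T(\mathrm{S}_\alpha))=\omega^\alpha+1$. Because $C_{2n}$ is uncountable, $o(\mathcal{V}_n)\geq\omega_1$, so Proposition \ref{well founded if countable index} forces $\mathcal{V}_n$ to be ill-founded. An infinite $\preceq$-chain in $\mathcal{V}_n$ then yields, exactly as in the proof of Proposition \ref{Lipschitzindex} (applied to coarse moduli), a $(\rho,\omega)$-coarse embedding $\tilde F_n\colon c_{00}^{\E_n}\to M$ of the integer lattice $c_{00}^{\E_n}=\{x\in c_{00}(\N):x(i)\in\E_n\}$.

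\textbf{Extension to $B_n$ and common base point.} Since coordinate-wise rounding shows $c_{00}^{\E_n}$ is a $1/2$-net in $B_n$, post-composing $\tilde F_n$ with a nearest-net map extends it to a coarse embedding $F_n^*\colon B_n\to M$ whose moduli differ from $(\rho,\omega)$ by a bounded additive constant independent of $n$; thus the family $(F_n^*)_n$ is already equi-coarse. Finally, the sequence $(F_n^*(0))_n$ lies in the separable Polish space $M$ and hence has an accumulation point $m_0\in M$. Picking a subsequence $(n_k)$ with $F_{n_k}^*(0)\to m_0$, I set $k(n)=\min\{k:n_k\geq n\}$, define $F_n:=F_{n_{k(n)}}^*|_{B_n}$, and redefine $F_n(0):=m_0$; this last change perturbs moduli on pairs involving $0$ by at most $\sup_k d(F_{n_k}^*(0),m_0)<\infty$, a bounded perturbation that preserves equi-coarseness and gives the required common base point.

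\textbf{Main obstacle.} The central conceptual hurdle is that Lemma \ref{partially equi-coarse} produces only partial regularization of the family $(\rho_\alpha,\omega_\alpha)$, so the clean uniform-modulus argument used under MA+$\neg$CH for Theorem \ref{T:B} is unavailable; the idea for bypassing this is precisely the coupling between coefficient range $\E_n$ and regularity scale $k=2n$, which converts partial uniformity into a genuine coarse embedding of each $B_n$. A subsidiary technical point is aligning the base points across $n$ to obtain $m_0$, which I handle by a separability-based accumulation argument in $M$ followed by a bounded perturbation.
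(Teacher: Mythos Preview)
Your strategy matches the paper's: couple the partial regularization of Lemma \ref{partially equi-coarse} with the finite alphabet $\E_n=\Z\cap[-n,n]$ so that on $[\E_n,G]$ all distances are at most $2n$, and then run the vine argument to obtain a $(\rho,\omega)$-embedding of the integer grid of each $B_n$. This part is correct, and your direct use of $\alpha\in C_{2n}$ to populate $\mathcal{V}_n$ is in fact a slight streamlining of the paper's detour through a spread map $s_L:\mathrm{S}_\alpha\to\mathrm{S}_\beta$. Up to this point you have proved the first sentence of the theorem.

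The gap is in the base-point step needed for the ``more precisely'' clause. The assertion that ``$(F_n^*(0))_n$ lies in the separable Polish space $M$ and hence has an accumulation point'' is simply false: separable Polish spaces need not be sequentially compact (take $M=\R$ and $F_n^*(0)=n$). Nothing in your construction bounds the locations $F_n^*(0)$, so no convergent subsequence is available and the common base point is left unsecured. The paper handles this by fixing $m_0$ \emph{before} the vine step: since each $(\mathrm{S}_\alpha(\Z),d_\infty)$ is uniformly discrete, one first perturbs the $G_\alpha$ to land in a countable dense $\tilde M\subset M$, and then pigeonhole on the countable set $\tilde M$ produces an uncountable $C_0\subset\omega_1$ and a single $m_0\in\tilde M$ with $G_\alpha(0)=m_0$ for all $\alpha\in C_0$. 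Applying Lemma \ref{partially equi-coarse} inside $C_0$ and adding the closed constraint $x_0=m_0$ to the definition of $\mathcal{V}_n$ (which still has uncountable index since $C_{2n}\subset C_0$) then forces every infinite branch to satisfy $\tilde F_n(0)=m_0$ automatically. Your post-hoc approach can also be repaired by replacing the accumulation-point claim with this same countable-pigeonhole idea applied to the sequence $(F_n^*(0))_n$.
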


\begin{proof}
We will first find $m_0\in M$ and $\rho,\omega:[0,+\infty)\to[0,+\infty)$ tending to $\infty$ at $\infty$, so that for any $\alpha<\omega_1$ and $n\in\mathbb{N}$ there exists $F_{\alpha,n}:B_{\alpha,n} = \{x\in X_{\mathrm{S}_\alpha,\mathbb{Z}}: \|x\|_\infty\leq n\}\to M$ with $F_{\alpha,n}(0) = m_0$ and for all $x,y\in B_{\alpha,n}$ we have $\rho(\|x-y\|_\infty) \leq d(F_{\alpha,n}(x),F_{\alpha,n}(y)) \leq \omega(\|x-y\|_\infty)$.
Let  $\tilde M$  be a countable  dense subset $M$, since $(\mathrm{S}_\alpha(\mathbb{Z}),d_\infty)$ is a uniformly discrete metric space, it follows from a straightforward perturbation argument
that every $(\mathrm{S}_\alpha(\mathbb{Z}),d_\infty)$ embeds coarsely into $(\tilde M,d)$ via a map $f_\alpha$ with compression and expansion moduli $\rho_\alpha,\omega_\alpha$. By passing to an uncountable set $C_0\subset \omega_1$ we can assume that there exists $m_0\in \tilde M$ so that for all $\alpha\in C_0$ we have $f_\alpha(0) = m_0$.

Fix $n\in\mathbb{N}$. Take the functions $\rho,\omega$ and the sets $C_0\supset C_1\supset C_2\supset\cdots$ given by Lemma \ref{partially equi-coarse}. By the conclusion of that  lemma, it follows that for every $\beta\in C_{2n}$ the function $f_\beta:(\mathrm{S}_\beta(\mathbb{Z}),d_\infty)\to \tilde M$ is a $(\rho,\omega)$-coarse embedding on every subset of $(\mathrm{S}_\beta(\mathbb{Z}),d_\infty)$ with diameter at most $2n$, and also because $\beta\in C_0$ we have $f_\beta(0) = m_0$.

Fix now $\alpha <\omega_1$. Because  $C_{2n}$ is uncountable we may pick $\beta \in C_{2n}$ so that $\beta>\alpha$. Then it is well known that there exists an infinite subset $L = \{\ell_i:\in\mathbb{N}\}$ of $\mathbb{N}$ so that for all $G = \{a_1,\ldots,a_d\}\in\mathrm{S}_\alpha$ the set $\{\ell_{a_1},\ldots,\ell_{a_d}\}$ is in $\mathrm{S}_\beta$. It follows that the map $s_L:(\mathrm{S}_\alpha(\mathbb{Z}),d_\infty)\to (\mathrm{S}_\beta(\mathbb{Z}),d_\infty)$ given by $\sum_{i\in G}m_ie_i\mapsto\sum_{i\in G}m_ie_{\ell_i}$ is an isometric embedding and maps $0$ to $0$. Therefore, $F_{\alpha,n} = B_{\alpha,n}\to M$, the restriction of $f_\beta\circ s_L$ to $B_{\alpha,n}$, has the desired properties.

Next, we will use Proposition \ref{well founded if countable index} to show that for all $N\in\mathbb{N}$ there exists a function $F_N:B_N\to M$ that is a $(\rho,\omega)$-coarse embedding with the additional property that $F_N(0) = m_0$.  More precisely, we will define this $F_N$ on the subset $B({N,\mathbb{Z}})$ of $B_N $ consisting of all integer valued sequences in the set $B_{N}$. Because this is a 1-net of $B_{N}$ and $N$ is arbitrary we may then deduce the desired conclusion. We denote $\mathbb{I}_N = [-N,N]\cap\mathbb{Z}$ and consider the closed $\mathbb{I}_N$-vine define by
\[
\begin{split}
\mathcal{V} := \Big\{(x_f)_{f\in [\mathbb{I}_N,G]}:&\; G\in[\mathbb{N}]^{<\omega}, x_f\in M \text{ for }f\in[\mathbb{I}_N,G],\; x_0 = m_0,\text{ and for }f,g\in[\mathbb{I}_N,G]\\
&\text{ we have }\rho(\|f-g\|_\infty) \leq d(x_f,x_g)\leq \omega(\|f-g\|_\infty)\Big\}
\end{split}
\]
Because for each $\alpha<\omega_1$ the space $(\mathrm{S}_\alpha(\mathbb{Z}),d_\infty)$ $(\rho,\omega)$-embeds into $(M,d)$ via $F_{\alpha,N}$, which maps $0$ to $m_0$, it follows that $o(\mathcal{V})\ge \omega_1$. Because we are only considering coarse embeddings we may assume that $(M,d)$ is complete. By Proposition \ref{well founded if countable index} the $\mathbb{I}_N$-vine $\mathcal{V}$ must be ill founded, \ie there exists a strictly increasing sequence of integers $(k_m)_m$ and for every finitely supported $f:\{k_i:i\in\mathbb{N}\}\to \mathbb{I}_N$ there exists $x_f\in M$ so that $\chi_m = (x_f)_{f\in[\mathbb{I}_N,\{k_1,\ldots,k_m\}]}$ is in $\mathcal{V}$ for all $m\in\mathbb{N}$. By the definition of $\mathcal{V}$ it follows that the map from $B({N,\mathbb{Z}})$ to $(M,d)$ given by $\sum_{i=1}^\infty m_ie_i\mapsto x_f$, where $f:\{k_i:i\in\mathbb{N}\}\to I_n$ is the function with $f(k_i) = m_i$, is a $(\rho,\omega)$-embedding.
\end{proof}

The last result of this section is a variation of the barycentric gluing technique, which has an interest on its own.  With this gluing technique we can show that if we can equi-coarsely embed the bounded subsets of $\co$ (or equivalently every separable bounded metric spaces) into a metric space $M$ then $M^4$ is coarsely universal. In particular, an immediate consequence of Theorem \ref{T:4.2}  and Theorem \ref{T:4.4}, below,  is

\begin{cor}
Let $(M,d)$ be a separable metric space, If for every $\alpha<\omega_1$ the metric space $(\mathrm{S}_\alpha(\mathbb{Z}),d_\infty)$ embeds coarsely into $(M,d)$, then $\co$ coarsely embeds into $M^4$.
\end{cor}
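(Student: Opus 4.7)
The plan is to derive the corollary as an immediate composition of Theorem \ref{T:4.2}, proved above, with the barycentric gluing Theorem \ref{T:4.4}, stated just below. The corollary has no real content of its own; all the work is carried out in the two ingredient theorems, and the role of the corollary is only to observe that their conclusions and hypotheses match up perfectly.

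First I would apply Theorem \ref{T:4.2} to the hypothesis. Since by assumption $(\mathrm{S}_\alpha(\Z),d_\infty)$ coarsely embeds into $(M,d)$ for every $\alpha<\omega_1$, Theorem \ref{T:4.2} immediately produces a point $m_0\in M$ together with a family of equi-coarse embeddings $F_n\colon B_n\to (M,d)$ satisfying $F_n(0)=m_0$ for every $n\in\N$, where $B_n=\{x\in\co:\|x\|_\infty\leq n\}$. In other words, the bounded balls of $\co$ embed equi-coarsely into $M$ in the \emph{anchored} form in which every ball is sent to a common base point. Note that this anchored conclusion is a genuine strengthening of plain equi-coarse embeddability and is the reason the second formulation of Theorem \ref{T:4.2} (with the basepoint condition) is recorded.

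Second, I would feed this anchored family $(F_n)_{n\in\N}$ into Theorem \ref{T:4.4}. The barycentric gluing mechanism takes such an anchored family of equi-coarse embeddings of the bounded balls of $\co$ and assembles them into a single coarse embedding of the unbounded space $\co$ into $M^4$; equivalently, Theorem \ref{T:4.4} asserts that under this hypothesis $M^4$ is coarsely universal for the class of separable metric spaces. Since $\co$ is itself a separable metric space, the desired coarse embedding $\co\hookrightarrow M^4$ follows at once, proving the corollary. The only verification needed is that the common-basepoint condition $F_n(0)=m_0$ delivered by Theorem \ref{T:4.2} is precisely the hypothesis that Theorem \ref{T:4.4} requires in order for the four $M$-coordinates to be assembled into a well-defined map on all of $\co$; this is transparent. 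The genuine obstacle, namely establishing that four copies of $M$ suffice for the barycentric gluing, is entirely contained in Theorem \ref{T:4.4} and is handled separately.
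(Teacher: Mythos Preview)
Your proposal is correct and matches the paper's own treatment: the paper states the corollary as an immediate consequence of Theorem \ref{T:4.2} and Theorem \ref{T:4.4} without further argument, and you have spelled out exactly that composition. One cosmetic remark: Theorem \ref{T:4.4} applied with $X=\co$ already yields the coarse embedding $\co\hookrightarrow M^4$ directly, so the detour through ``$M^4$ is coarsely universal, and $\co$ is separable'' is unnecessary (though not wrong).
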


The original barycentric gluing technique (see \cite{Baudier2007}), creates a coherent embedding of a metric space into a Banach space, by pasting embeddings of balls of growing radii together. Here, the process is reversed in the sense that we will paste balls of Banach spaces into metric spaces, but our proof has the caveat that it requires the  gluing into $M^4$, rather than in $M$. Here is our general result.

\begin{thm}\label{T:4.4}
Let $(X,\|\cdot\|)$ be a Banach space and $(M,d)$ be a metric space. Assume that there exist increasing functions $\rho,\omega\colon [0,\infty)\to [0,\infty)$ that are tending to $\infty$ at $\infty$, $m_0\in M$, and for all $n\in\N$, maps $h_n: nB_X:\to M$, so that $h_n(0)=m_0$, and for all $x,y\in nB_X$
\begin{equation}
\rho(\|x-y\|) \le d\big(h_n(x), h_n(y)\big)\le  \omega(\|x-y\|).
\end{equation}
We equip $M^4$ with the $\ell_\infty$-metric associated with $d$, that we still denote $d$. Then there is a $(\tilde{\rho},\tilde{\omega})$-embedding of $X$ into $M^4$ where
$\tilde{\rho}(t)=\frac12\rho\Big(\frac{t}2\Big)$ and $\tilde{\omega}(t) =8 \omega(3t)$, for all $0<t<\infty$.
\end{thm}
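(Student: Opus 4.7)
\bigskip\noindent\textbf{Proof proposal for Theorem \ref{T:4.4}.}
The plan is to build the embedding $H\colon X\to M^4$ coordinate by coordinate, using the four copies of $M$ to record four staggered ``scale choices'' so that the inherent discontinuity of any single-scale pasting is absorbed by an $\ell_\infty$-maximum of parallel attempts. Concretely, I would fix four scale functions $N_j\colon X\to\N$ (for $j=1,2,3,4$), each depending only on $\|x\|$ and taking values $\geq\|x\|$, built from a $4$-fold staggered dyadic-type partition of $[0,\infty)$: the four partitions have cells of the same multiplicative width (roughly of size $6$) but are offset from each other by a quarter-period in logarithmic scale, and within each cell the value of $N_j$ is constant. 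Then set
\[
H_j(x):=h_{N_j(x)}(x)\in M,\qquad H(x):=\bigl(H_1(x),H_2(x),H_3(x),H_4(x)\bigr)\in M^4.
\]
The anchor condition $h_n(0)=m_0$ for every $n$ will play a central role in what follows.

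For the lower bound, fix $x,y\in X$ and set $t=\|x-y\|$. The $4$-fold staggered structure is arranged so that at least one index $j_0$ satisfies $N_{j_0}(x)=N_{j_0}(y)=:N$; since $x,y\in NB_X$, one has
\[
d\bigl(H_{j_0}(x),H_{j_0}(y)\bigr)=d\bigl(h_N(x),h_N(y)\bigr)\geq\rho(t)\geq\rho(t/2)\geq\tfrac12\rho(t/2)=\tilde\rho(t),
\]
and hence $d_\infty(H(x),H(y))\geq\tilde\rho(t)$. For the upper bound, handle each coordinate separately. When $N_j(x)=N_j(y)=N$, directly $d(H_j(x),H_j(y))\leq\omega(t)\leq 8\omega(3t)=\tilde\omega(t)$. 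When the scales mismatch, insert $m_0$ via triangle inequality and use $h_n(0)=m_0$ to get
\[
d\bigl(H_j(x),H_j(y)\bigr)\leq d\bigl(h_{N_j(x)}(x),m_0\bigr)+d\bigl(m_0,h_{N_j(y)}(y)\bigr)\leq\omega(\|x\|)+\omega(\|y\|)\leq 2\omega\bigl(\max(\|x\|,\|y\|)\bigr).
\]
The staggered partition is tuned so that a mismatch on some coordinate $j$ forces $\max(\|x\|,\|y\|)\leq 3t$ (equivalently: whenever $\max(\|x\|,\|y\|)>3t$, so that $\|x\|$ and $\|y\|$ are in the narrow multiplicative range $[1,3/2]$ of each other, all four partitions keep $x,y$ in a common cell); this produces the bound $\leq 2\omega(3t)\leq 8\omega(3t)=\tilde\omega(t)$, and the maximum over the four coordinates then gives the desired expansion control.

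The main obstacle is the precise calibration of the staggered partitions that makes the geometric implication ``mismatch on some coordinate $\Rightarrow\max(\|x\|,\|y\|)\leq 3t$'' actually work with just four shifts. The delicate point is balancing two opposing demands: the shifts must be fine enough that for every pair $(x,y)$ \emph{some} coordinate has matching scales (needed for the lower bound), yet coarse enough that the \emph{few} coordinates where scales can differ only do so in the regime where $\|x\|,\|y\|$ are already comparable to $t$, so the triangle-through-$m_0$ bound $2\omega(\max)$ is controlled by $\omega(3t)$. This forces a specific choice of multiplicative shell width together with a $4^{\text{th}}$-root staggering, and the constants $3$ (in $\omega(3t)$) and $8$ (in $8\omega(3t)$)---and likewise the $1/2$'s in $\tilde\rho$---emerge from a careful bookkeeping of these parameters, not from any single triangle inequality. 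Once these geometric constraints on the partitions are set up, the verification of compression and expansion is routine.
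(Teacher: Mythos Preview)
Your approach has a genuine gap in the upper bound, and it is not a matter of calibration: with piecewise-constant scale functions $N_j$ the coordinate maps $H_j(x)=h_{N_j(x)}(x)$ are \emph{discontinuous} at every shell boundary, and since $d_\infty$ on $M^4$ is a maximum, a single bad coordinate destroys the expansion control. Concretely, fix any coordinate $j$ and any boundary point $b$ of its partition. Take $x,y$ on the same ray with $\|x\|=b-\varepsilon$ and $\|y\|=b+\varepsilon$. Then $N_j(x)\neq N_j(y)$, and your own estimate gives
\[
d\bigl(H_j(x),H_j(y)\bigr)\leq \omega(\|x\|)+\omega(\|y\|)\approx 2\omega(b),
\]
which is of fixed positive size, while $t=\|x-y\|=2\varepsilon\to 0$ and hence $\tilde\omega(t)=8\omega(6\varepsilon)\to 0$. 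So the claimed implication ``a mismatch on some coordinate forces $\max(\|x\|,\|y\|)\le 3t$'' is simply false: near any boundary of any partition, points with norm ratio arbitrarily close to $1$ lie in different cells. No staggering of finitely many partitions can prevent this, because every partition has boundaries.

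What the paper does differently is precisely to remove these discontinuities by a \emph{barycentric} gluing rather than a hard cutoff. It fixes radii $r_0<r_1<\cdots$ with $\rho(r_{n+1})>2\omega(r_n)$ and $r_{n+1}\ge 2r_n$, and for each residue class $i\in\{0,1,2,3\}$ sets
\[
F^{(i)}(x)=h_{r_{4l+i}}\bigl(\alpha_{4l+i}(\|x\|)\,x\bigr),
\]
where $\alpha_n\colon[0,\infty)\to[0,1]$ is a tent function supported on $(r_{n-4},r_n)$ and equal to $1$ on $[r_{n-3},r_{n-1}]$. The point is that as $\|x\|$ approaches either end of the shell $[r_{4(l-1)+i},r_{4l+i}]$, the argument $\alpha_{4l+i}(\|x\|)\,x$ is pushed to $0$, so $F^{(i)}(x)\to h_{r_{4l+i}}(0)=m_0$, and the two one-sided limits at the shell boundary agree. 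This is what makes each $F^{(i)}$ coarsely continuous and yields the factor $\omega(3t)$ from the Lipschitz estimate $\|\alpha_n(\|x\|)x-\alpha_n(\|y\|)y\|\le 3\|x-y\|$; the constant $8$ then comes from chaining across at most a bounded number of shell crossings. The four residue classes are used only for the \emph{lower} bound: for any $x,y$ with $\|x\|\le\|y\|$ there is some $i$ with $\alpha_{4l+i}\equiv 1$ on an interval containing $\|y\|$ (and then one compares via $\rho$ or via $m_0$ depending on where $\|x\|$ falls), and the growth condition $\rho(r_{n+1})>2\omega(r_n)$ handles the far case. Your intuition that ``four staggered scales'' suffice is correct for the compression side, but the expansion side genuinely requires the smoothing; the piecewise-constant construction cannot be repaired by adjusting widths or offsets.
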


\begin{proof}
Choose inductively $r_0=0<r_1<r_2<\ldots$ in $\N$, so that
  \begin{equation}\label{E:1}
  \rho(r_{n+1})> 2\omega(r_{n}) \text{ and }r_{n+1}\ge 2r_{n}
  \end{equation}

  For $n\in \N$ we define the following map $\alpha_n:[0,\infty)\to [0,1]$
  (set $r_{-4}=r_{-3}=r_{-2}=r_{-1}=r_0=0$.)

  $$\alpha_n(t)=\begin{cases}   0 &\text{if $t<r_{n-4}$ or $t>r_{n}$,} \\
                                            \frac{ t-r_{n-4}}{r_{n-3}-r_{n-4}}                                               &\text{if $r_{n-4}\le t< r_{n-3}$,}\\
                                               1                                                                                             &\text{if $r_{n-3}\le t\le r_{n-1}$,}\\
                                            \frac{r_{n}-t}{r_{n}-r_{n-1}}                                                       &\text{if $r_{n-1}< t\le r_{n}$.}\\
                                                                                 \end{cases}$$
  The support of $\alpha_n$ is $(r_{n-4}, r_n)$, and $\{t: \alpha_n(t)=1\}=[r_{n-3},r_{n-1}]$.

  For $i\in\{0,1,2,3\}$  we define $F^{(i)}: X\to M$ as follows: For $x\in X$ we choose $l\in \Z^+$,  so that
$r_{4(l-1)+i}\le  \|x\|<r_{4l+i}$ and put
$$F^{(i)}(x)=h_{r_{4l+i}}\big(\alpha _{r_{4l+i}}(\|x\|) x\big).$$
Then we define the map
$$F:X\to  M^4 , \quad x\mapsto \big(F^{(0)}(x),F^{(1)}(x),F^{(2)}(x),F^{(3)}(x)\big).$$
We will show that the map $F$ from $X$ into $M^4$, satisfies:
\begin{equation}
\frac{1}{2}\rho\Big(\frac{\|x-y\|}{2}\Big) \le d\big(F(x), F(y)\big)\le  3\omega(3\|x-y\|).
\end{equation}

Firstly, we estimate  the compression function. Let $x,y\in X$, and assume without loss of generality that $\|x\|\le \|y\|$. Choose $l\in\N_0$ and $i\in\{0,1,2,3\}$, so that  $r_{4l+i-2}\le \|y\|\le r_{4l+i-1}$. It is sufficient to show that $d(F^{(i)}(y),F^{(i)}(x))\ge \tilde\rho(\|x-y\|)$. We first note that $\alpha_{4l+i}(y)=1$ and thus $F^{(i)}(y)=h_{r_{4l+i}}(y)$. We consider two cases.

   \noindent Case 1.   $r_{4l+i-3}\le \|x\|$, thus $\alpha _{4l+i}(x)=1$ and  $F^{(i)}(x)=h_{r_{4l+i}}(x)$. It follows that
   $$ d\big(F^{(i)}(x),F^{(i)}(y)\big)=  d\big((h_{r_{4l+i}}(x),h_{r_{4l+i}}(y)\big)\ge \rho(\|x-y\|).$$

   \noindent Case 2. $\|x\|< r_{4l+i-3}$. Thus, for some $m\le l$
   \begin{align*}
    d\big(F^{(i)}(x),F^{(i)}(y)\big)&\ge  d\big(F^{(i)}(y), m_0\big) -d\big(F^{(i)}(x),m_0\big)\\
 &= d\big(h_{r_{4l+i}}(y), h_{r_{4l+i}}(0)\big) -d\big(h_{r_{4m+i}}(\alpha_{4m+i}(\|x\|) x), h_{r_{4m+i}}(0) \big)\\
 &\ge \rho(\|y\|)-\omega(\|x\|)\\
 &\ge \frac12 \rho(\|y\|)+ \frac12\rho(r_{4l+i-2})-\omega(r_{4l+i-3})\\
 &\ge \frac12 \rho(\|y\|)
  \ge \frac12 \rho\Big(\frac{\|x-y\|}{2}\Big).
   \end{align*}

Secondly, we estimate the expansion function. We fix $i\in \{0,1,2,3\}$ and we consider three cases.

 \noindent{Case 1.} For some $n\in\N$ we have $r_{n-1}\le \|x\|\le\|y\| \le r_n$.

 \noindent If $n=4l+i-1$ or $n=4l+i-2$, then $\alpha_{4l+i}(\|y\|)=\alpha_{4l+i}(\|x\|)=1$, and therefore
 $$d\big(F^{(i)}(x),F^{(i)}(y)\big)= d\big( h_{r_{4l+i}}(x),  h_{r_{4l+i}}(y)\big)\le \omega(\|x-y\|).$$
 If $n=4l+i-3$, or $n=4l+i$, then
 $$\big|\alpha_{4l+i}(\|x\|) -\alpha_{4l+i}(\|y\|)\big|=\Bigg|\frac{\|x\|-\|y\|}{r_n-r_{n-1}} \Bigg|\le\frac2{r_n} \|x-y\| ,$$
  and therefore
  $$
  \big\|\alpha_{4l+i}(\|x\|)x-\alpha_{4l+i}(\|y\|) y\big\| \le \alpha_{4l+i}(\|x\|)\|x-y\|+ \|y\| \big| \alpha_{4l+i}(\|x\|)-\alpha_{4l+i}(\|y\|)\big|\le
  3\|x-y\|,$$
 which implies that
 $$d\big(F^{(i)}(x),F^{(i)}(y)\big)  =  d\big( h_{r_{4l+i}}(\alpha_{4l+i}(\|x\|)x),  h_{r_{4l+i}}(\alpha_{4l+i}(\|y\|)y)\big) \le\omega(3\|x-y\|).$$

From now we assume that there are $m,n\in\N$, $m<n$, so that $r_m\le \|x\|\le r_{m+1}\le r_n\le \|y\|\le r_{n+1}$.
For  $j=1,2,\ldots,n-m$,  let $z_j$ be the  element on the the segment $[x,y]$  (\ie points of the form $x+t(y-x)$ with $0\le t\le 1$) so that $\| z_j\|=r_{m+j}$, and put $z_0=x$ and $z_{n-m+1}= y$.

 \noindent Case 2.  $n-m\le 3$.\\
$$
d\big(F^{(i)}(x),F^{(i)}(y)\big)\le \sum_{i=1}^{n-m+1} d\big(F^{(i)}(z_{i-1}),F^{(i)}(z_{i})\big)
\le \sum_{i=1}^{n-m+1} \omega(3\|z_{i-1}-z_{i}\|) \le 4\omega(3\|x-y\|).
$$

\noindent Case 3. $n-m\ge 4$. It follows then from Case 2 that
\begin{align*}
d\big(F^{(i)}(x),F^{(i)}(y)\big)&\le d\big(F^{(i)}(x),m_0\big)+d\big(F^{(i)}(y),m_0\big)\\
&=d\big(F^{(i)}(x),F^{(i)}(z_{j_1})\big)+d\big(F^{(i)}(y),F^{(i)}(z_{j_2})\big)\\
&\le 4\omega(3\|x-z_1\|) +4\omega(3\|y-z_2\|)\le 8\omega(3\|x-y\|).
\end{align*}

\end{proof}

\section{Universality properties of interlacing graphs}
In \cite{Kalton2007}, Kalton showed that a Banach space $X$ that is coarsely universal for the class of all separable metric spaces, or equivalently that coarsely contains $\co$, cannot have separable iterated duals, \ie $X^{(r)}$ is nonseparable from some $r\ge 2$. Kalton's argument is based on the metric properties of the interlacing graphs. As we will see in the next section, these graphs introduced by Kalton have some remarkable universality properties.

\subsection{Almost isometric universality of the interlacing graphs}
We define a slightly larger class of interlacing graphs than the ones introduced by Kalton. The set of vertices is $[\N]^{<\omega}$, the set of finite subsets of $\N$, and we declare that two vertices $A = \{a_1,\ldots,a_n\}$ and $B = \{b_1,\ldots,b_m\}$ in $[\N]^{<\omega}$
are adjacent if and only if $a\not=b$ and  one of the following interlacing relations holds
\begin{itemize}
 \item[(i)] $n = m+1$ and $a_i\leqslant b_i \leqslant a_{i+1}$ for $1\leqslant i \leqslant m$,
 \item[(ii)] $m = n+1$ and $b_i\leqslant a_i \leqslant b_{i+1}$ for $1\leqslant i \leqslant n$,
 \item[(iii)] $n = m$, $a_i\leqslant b_i \leqslant a_{i+1}$ for $1\leqslant i < n$, and $a_n\leqslant b_n$, or
 \item[(iv)] $n = m$, $b_i\leqslant a_i \leqslant b_{i+1}$ for $1\leqslant i < n$, and $b_n\leqslant a_n$.
\end{itemize}
We also connect the empty set with all singletons. We refer to this graph as the universal interlacing graph, and we denote $([\N]^{<\omega},d_{\mathrm{I}})$ the universal interlacing graph equipped with its canonical graph metric. Kalton's interlacing graph are defined in the same way besides only vertices with the same length were considered. More precisely, Kalton's interlacing graph of diameter $k$ is the space $([\N]^{k},d_{\mathrm{I}}^{(k)})$, where the graph metric only refers to the interlacing relations $(iii)$ or $(iv)$ in this case. For $A, B\in[\N]^k$, although it is obvious that $d_{\mathrm{I}}^{(k)}(A,B) = 1$ if and only if $d_{\mathrm{I}}(A,B) = 1$, it is not immediately clear that on $[\N]^k$  the metrics $d_{\mathrm{I}}^{(k)}$ and $d_{\mathrm{I}}$ coincide. As we shall see later, this is indeed the case.

The universality properties of the interlacing graphs stem from the fact that the interlacing metric admits an interpretation in terms of the summing norm on $\co$. For $A, B$ in $[\N]^{<\omega}$ define the summing distance $$d_{\mathrm{sum}}(A,B) = \left\|\sum_{i\in A}s_i - \sum_{i\in B}s_i\right\|_{\mathrm{sum}}$$
where $(s_i)_i$ denotes the summing basis of $c_0$, endowed with the usual bimonotone version of the summing norm, i.e.
\begin{align}
\label{summing norm}
\left\|\sum_ia_is_i\right\|_{\mathrm{sum}}= \sup\left\{\left|\sum_{i=k}^ma_i\right|: k,m\in \N,\,  k\le m\right\}.
\end{align}
 In \eqref{summing norm} one only needs to consider intervals at whose boundaries are sign-changes of the $a_i$'s. More precisely
for a sequence $(a_i)$ in $c_{00}$ let $0=m_0<m_1<\ldots m_s$ be chosen in $\N$ so that for all $i\le s$ the signs of $a_j$ on $j\in [m_{i-1}+1, m_i]$ are the same
(\ie all non negative or all non positive) then
\begin{align}
\label{summing norm2}
\left\|\sum_ia_is_i\right\|_{\mathrm{sum}}= \sup\left\{\left|\sum_{i=k}^l \sum_{m_{i-1}+1}^{m_i} a_j\right|   1\le k\le l\le s\}\right\}.
\end{align}
Thus for $A,B\subset [\N]^{<\omega}$ we write $A\triangle B$ in increasing order as $A\triangle B=\{x_1,x_2,\ldots,x_n\}$ and note that
\begin{align}
\label{summing metric for sets}
d_\s(A,B) &= \max\left\{\left|\#(A\cap E) - \#(B\cap E)\right|: E \text{ is an interval of }\N\right\}\\
&= \max\left\{\left|\#(A\cap [x_i, x_j]) - \#(B\cap[x_i,x_j])\right|:  1\le i\le j\le n \right\}.\notag
\end{align}
The above forms of the metric $d_\s$ will be used more often. We first show that the interlacing metric and the summing distance coincides. For fixed $k$, the coincidence of $d^{(k)}_I(A,B)$ with $\max\left\{\left|\#(A\cap E) - \#(B\cap E)\right|: E \text{ is an interval of }\N\right\}$ was already shown in \cite{LancienPetitjeanProchazka2019}, where it was afterwards used in connection with the canonical norm of $c_0$ instead of $\|\cdot\|_{\s}$.

For $n\in\N$, $A=\{a_1,a_2,\ldots , a_n\}\in\N^{<\omega_1}$, with   $a_1<a_2<\ldots a_n$, we call $A'=\{ a'_1, a'_2, \ldots a'_n\}$, with $a'_1<a'_2<\ldots<a'_n$
{\em a shift to the left of $A$ } if  $A' \not= A$ and $a_1\le a'_1\le a_2\le a'_2\le \ldots\le a_n\le a'_n$.
We note that in this case

\begin{equation}\label{E:5.2.1} d_I(A,A')=d_I(A, A'\setminus \{a'_n\}) =d_{\s}(A,A')=d_{\s}(A, A'\setminus \{a'_n\}).\end{equation}

For another set $B\in[\N]^{<\omega}$
we say that a left shift $A'$ of $A$ is a {\em shift towards $B$} if $A'\setminus A\subset B\setminus A$.

\begin{thm}\label{T:5.2.1} For $A,B\in[\N]^{<\omega}$ we have that $d_\s(A,B) = d_I(A,B)$.

Moreover if $k=\#A=\#B$ then there is a path of length $d_I(A,B)$ from $A$ to $B$ in the interlacing graph,  which stays in $[\N]^k$. Thus the restriction of $d_I$ to  $[\N]^k$ is $d^{(k)}_I$.

\end{thm}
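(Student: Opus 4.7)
My plan is to establish $d_\s(A,B) = d_I(A,B)$ via two inequalities, and to deduce the moreover part directly from the construction used in one of them.

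For the inequality $d_\s \le d_I$, I will check that every pair of distinct adjacent vertices in the universal interlacing graph is at summing distance exactly one. Using the profile $\pi_A(x) := \#(A \cap [1,x])$ and $h_{A,B}(x) := \pi_A(x) - \pi_B(x)$ (with $h_{A,B}(0) = 0$), formula \eqref{summing metric for sets} reads $d_\s(A,B) = \max_x h_{A,B}(x) - \min_x h_{A,B}(x)$, and each of the four interlacing conditions (i)--(iv) directly forces the image of $h_{A,B}$ to lie in $\{0,1\}$ or in $\{-1,0\}$. Triangle inequality then yields $d_\s \le d_I$ on all of $[\N]^{<\omega}$.

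For the converse inequality I plan to argue by induction on $d := d_\s(A,B)$, producing an $A_1$ adjacent to $A$ in the interlacing graph with $d_\s(A_1,B) = d-1$ and concatenating paths. The base case $d=0$ is immediate. For the step with $d \ge 1$, set $h := h_{A,B}$, $M := \max h$, $m := \min h$; then either $M \ge 1$ or $m \le -1$, and I will only write down the first case since the second is symmetric (``right-shift'' instead of ``left-shift''). The key construction is a \emph{max-plateau shift}: let $g := \mathbf{1}_{\{x\,:\, h(x)\ge M\}}$, which lies in $\{0,1\}$, jumps up at the start points of the maximal plateaus of $h$ at height $M$ (necessarily elements of $A \setminus B$), and jumps down at their end points (necessarily elements of $B \setminus A$). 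Define $A_1$ by removing from $A$ the up-jump points of $g$ and adjoining its down-jump points, so that $\pi_{A_1} = \pi_A - g$. A brief manipulation with the $\pi$'s shows that the sorted enumeration of $A_1$ satisfies $a_i \le a'_i \le a_{i+1}$, i.e.\ that $A_1$ is a left shift of $A$, and by construction $A_1 \setminus A \subseteq B \setminus A$, so it is a shift toward $B$ in the sense of the paper. Consequently $A$ and $A_1$ are adjacent in the interlacing graph: via relation (iii) when all max-plateaus close at a finite position, and via relation (i) exactly when the last plateau extends to infinity, which can happen only if $h(\infty) = M$, forcing $\#A > \#B$ and $\#A_1 = \#A - 1$. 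From $h_{A_1, B} = h - g$ one reads off $d_\s(A_1, B) = (M - 1) - m = d - 1$, completing the induction.

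The moreover part is now automatic: when $\#A = \#B = k$ we have $h(\infty) = 0$, so neither the max- nor the min-plateaus of $h$ can extend to infinity, and the construction always produces $A_1 \in [\N]^k$ adjacent to $A$ via relation (iii) or (iv). Iterating, the inductive path stays inside $[\N]^k$, and this gives the equality $d_I|_{[\N]^k} = d^{(k)}_I$. The one step I expect to write out most carefully is the verification that $\pi_{A_1} = \pi_A - g$ with $g \in \{0,1\}$ forces the sorted enumeration of $A_1$ to satisfy $a_i \le a'_i \le a_{i+1}$; this elementary combinatorial observation is what ties the plateau shift to the interlacing relations used in the definition of the graph and is the only place where the relationship between the summing norm and the interlacing structure is actually exploited.
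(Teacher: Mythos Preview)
Your proof is correct and follows the same overall scheme as the paper's: establish $d_\s\le d_I$ by checking that adjacent pairs have summing distance one, and then run an induction on $d:=d_\s(A,B)$ by producing a vertex $A_1$ adjacent to $A$ with $d_\s(A_1,B)=d-1$. Where you differ is in the construction of that intermediate vertex. The paper writes $A\triangle B=\{x_1<\cdots<x_l\}$, locates every index $i_s$ with $x_{i_s}\in A$ and $x_{i_s+1}\in B$, and swaps all of these simultaneously (dropping $x_l$ in a second case); most of the effort then goes into a somewhat delicate case analysis, via two auxiliary functions $f$ and $g$ on $\{1,\dots,l\}$, to verify that this aggressive shift really drops $d_\s$ by one. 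Your max-plateau shift is more parsimonious: you only alter $A$ at the entry and exit points of the set $\{x:h(x)=M\}$, which forces $h_{A_1,B}=h-\mathbf 1_{\{h=M\}}$, so the equality $d_\s(A_1,B)=(M-1)-m=d-1$ is read off directly and the case analysis disappears. Both arguments handle the ``moreover'' clause the same way, by observing that $h(\infty)=\#A-\#B=0$ rules out the cardinality-dropping branch (your open last plateau, the paper's Case~2). Your bookkeeping is cleaner; the paper's construction has the incidental feature that its single step moves $A$ farther toward $B$, but this is not exploited.
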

\begin{proof} We prove our statement by induction for all $m\in\N\cup\{0\}$, and all $A,B\in[\N]^{<\omega}$ with $m=d_\s(A,B)$.

If $m=0$ and $d_\s(A,B)=0$ and thus $A=B$, our claim is trivial. If $m=1$ and $d_\s(A,B) = 1$, we will show that $d_I(A,B) = 1$. Write $A = \{a_1,\ldots,a_n\}$, $B = \{b_1,\ldots,b_m\}$ and assume, without loss of generality, that $\min(A\triangle B) = a_{i_0}\in A$.  Note that $|n - m| = |\#A -\#B| \leq d_\s(A,B) = 1$ and by the assumption  $\min(A\triangle B) = a_{i_0}\in A$ we have $1 \leq \#A\cap[a_{i_0},\max\{A\cup B\}] - \#B\cap[a_{i_0},\max\{A\cup B\}] = (n-i_0+1) - (m - i_0)$, i.e., $m\leq n\leq m+1$. Next, observe that for $1\leq i\leq \min\{m,n\}$ we have $a_i\leq b_i$. Otherwise, set $j_0 = \min\{1\leq i\leq \min\{m,n\}: a_i>b_i\}$ and note that $a_{i_0} < b_{i_0}$ and thus $i_0<j_0$. If we set $E = [b_{i_0},b_{j_0}]$ then $d_\s(A,B) = 1 \geq \#B\cap E - \#A\cap E = (j_0 - i_0+1) - (j_0 - i_0 - 1) = 2$. We also observe that for $1\leq i\leq \min\{m,n-1\}$ we have $b_i\leq a_{i+1}$. If this is not the case, set $s_0 = \min\{1\leq i\leq \min\{m,n-1\}: b_i>a_{i+1}\}$ and observe that if $E = [a_1,a_{s_0+1}]$ then $\#A\cap E = s_0 + 1$ whereas $\#B\cap E = s_0 -1$, which is absurd. Finally we distinguish the cases $n=m$ and $n=m+1$. If $n=m$ then we have demonstrated that (iii) of the definition of adjacency holds. If $n=m+1$ then we have demonstrated that (i) holds.

Assume now that for some $m\geq 2\in \N$, and all $A,B\in[\N]^{<\omega}$  with
  $d_\s(A,B)<m$ it follows that
 $d_\s(A,B) = d_I(A,B)$, and that, $d_I(A,B)=d_I^{(k)}(A,B)$ if $k=\#A=\#B$.

 Let $A,B\in[\N]^{<\omega}$  with $d_\s(A,B)=m$.
  If $A\subset B$, or $B\subset A$, and we   assume without loss of generality
  that $B\subsetneq A$, we put  $A'=A\setminus \{ a\}$, where $a\in A\setminus B$.
  Then $d_{\s}(A, A')=d_{I}(A,A')=1 $ and $d_\s(A',B)=m-1$, and we deduce our claim from the induction hypothesis

   Assuming that $A\not\subset B$ and  $B\not\subset A$
   we write
 $A\triangle B$ in increasing order as
  $A\triangle B=\{x_1,x_2,\ldots,x_l\}$.
  It   follows that 

  $$m=d_\s(A,B)= \max_{i\le j}\big| \#(A\cap[x_i,x_j])-\#(B\cap[x_i, x_j])\big|. $$
Without loss of generality we can assume that
  $x_1\in A\setminus B$. There is a $t\in\N$
  and numbers $1\le i_1<i_2<\ldots <i_t<l$ so that
  $$\{i_s: s=1,\ldots t\} = \big\{i\in\{1,2,\ldots, l-1\}:  x_i\in A\text{ and } x_{i+1} \in B\big\}.$$

  We will  now define $A'\in[\N]^\omega$ for which
  $d_I(A,A')=d_{\s} (A,A')=1 $ and
  $d_{\s}(A', B)\le m-1$, and consider the following two cases:


       \noindent Case 1.
   For all $1\le j\le l$  we have $\#(A\cap[x_j,x_l])-\#(B\cap[x_j,x_l])<m$. Then we put
    $$A' =(A\setminus \{x_{i_s}: s\le t\})\cup\{ x_{i_s+1} : s\le t\},$$
    which is a left shift of $A$ towards $B$.

     \noindent Case 2.
      There is  a $j\le l $ so that $\#(A\cap[x_j,x_l])-\#(B\cap[x_j,x_l])=m$.  It follows that $x_l\in A$ and we put
   $$A' =\Big((A\setminus \{x_{i_s}: s\le t\})\cup\{ x_{i_s+1} : s\le t\}\Big)\setminus\{x_l\}.$$

We observe that if $\#A=\#B$ the second case cannot happen. Indeed,
assume that there is a $j\le l$ so that  $\#(A\cap[x_j,x_l])-\#(B\cap[x_j,x_l])=m$, then $j>1$ and
 it follows that
\begin{align*}
 \#(B\cap[x_2,x_{j-1}] )- \#(A\cap[x_2,x_{j-1}])&=
 \#(B\setminus A)- \#(B\cap(\{x_1\}\cup[x_j,x_l])-( \#(A\setminus B)- \#(A\cap(\{x_1\}\cup[x_j,x_l]))\\
&= \#(A\cap(\{x_1\}\cup[x_j,x_l]))- \#(B\cap(\{x_1\}\cup[x_j,x_l]))=m+1
\end{align*}
which is a contradiction.

Thus, it follows  $\#A'=\#A$ if $\#A=\#B$.

From \eqref{E:5.2.1}  it follows that $d_I(A,A')=d_{\s}(A,A')=1$.
  We need to show that
  $d_{\s}(A',B)\le m-1$, and thus, by the triangle inequality  $d_{\s}(A',B)= m-1$.

  First let $i\in\{1,2\ldots,l\}$ and define
  for $i\leq j\leq l$
  $$f(j)= \#(A'\cap[x_i,x_j])-\#(B\cap[x_i, x_j]). $$
  Observe that $f(i) \leq 1 \leq m-1$. We claim that for all $i<j\le l$
  \begin{equation} f(j)\le \min \big( \#(A\cap[x_i,x_j])-\#(B\cap[x_i, x_j]), m-1\big).
  \end{equation}
    Since $A'\setminus B\subset A\setminus B$ we have   $f(j)\le  \#(A\cap[x_i,x_j])-\#(B\cap[x_i, x_j])$, for $i\le j$.

  Assume that our  claim is not true, and let $k$  be the minimum of all $j>i$ so that
  $$f(j)=1+\min\big( \#(A\cap[x_i,x_j])-\#(B\cap[x_i, x_j]), m-1\big).$$
  Since $f(k)\le f(k-1) +1$ it follows
  that $\#(A'\cap[x_i,x_{k-1}])-\#(B\cap[x_i, x_{k-1}])=m-1$ and
  since $A'\triangle B\subset A\triangle B$ it follows that $x_{k}\in A$ and thus
   $\#(A\cap[x_i,x_{k-1}])-\#(B\cap[x_i, x_{k-1}])=m-1$
   (otherwise $\#(A\cap[x_i,x_{k-1}])-\#(B\cap[x_i, x_{k-1}])=m$ and thus
 $\#(A\cap[x_i,x_{k}])-\#(B\cap[x_i, x_{k}])=m+1$ ).
 It follows  therefore that  $\#(A\cap[x_i,x_{k}])-\#(B\cap[x_i, x_{k}])=m$.

 Either $k<l$ then $x_{k+1}\in B$, and since $x_k\in A$ it follows from the definition of $A'$
 that $x_k\not\in A'$ and thus
 $f(k)=f(k-1)=m-1$, which contradicts our assumption.
 Or $k=l$ and thus $\#(A\cap[x_i, x_l])- \#(B\cap[x_i, x_l])=m$, which
 implies  that $x_k=x_l\not\in A'$, since the second case in the definition of $A'$ occurs,
 it would  again follow that $f(k)=m-1$, which is also a  contradiction.

 Next we let $j=1,\ldots,l$, and put for $i=1,2,\ldots j$
 $$g(i)=  \#(B\cap[x_i,x_j])-\#(A'\cap[x_i, x_j]),$$
 and claim
 that $g(i)\le  \min(\#(B\cap[x_i,x_j])-\#(A\cap[x_i, x_j]), m-1)$ for all $i\in\{1,2,\ldots, l\}$.

 Again since $B\triangle A'\subset B\triangle A$ it follows that $g(i)\le  \#(B\cap[x_i,x_j])-\#(A\cap[x_i, x_j])$ for all $i\in\{1,2,\ldots, j\}$.

Assume our claim is not true and  let $k$ be the maximal $k<j $ so that
$g(k)=m$. So it follows that $\#(B\cap[x_k,x_j])-\#(A\cap[x_k, x_j])=m$,
and thus $x_k\in B$, and $x_{k-1}\in A$ (note that $k\not=1$  since $x_1\in A$)
But this means that $x_k\in A'$ and thus
 $\#(B\cap[x_k,x_j])-\#(A'\cap[x_k, x_j])=m-1$, which is again a contradiction.

We therefore showed that for all $1\le i<j\le l$, $|\#(A'\cap[x_i,x_j])- \#(B\cap[x_i,x_j])|\le m-1$,  which finishes our proof.

\end{proof}

The following Corollary could of course be also proven directly very easily.

\begin{cor}
\label{increasing length isometrically}
For all $k,m\in\N$ with $k<m$, $([\N]^{k},d_{\mathrm{I}}^{(k)})$ embeds isometrically into $([\N]^{m},d_{\mathrm{I}}^{(m)})$.
\end{cor}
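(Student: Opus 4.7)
The plan is to reduce to the summing metric via Theorem 5.2.1 and then exhibit a concrete shift-plus-padding map. The key observation driving the whole argument is that $d_{\mathrm{sum}}$ on subsets of $\N$ is a function of the symmetric difference only (through interval-counts), so common elements cancel, and it is invariant under order-preserving injections of $\N$ (since intervals pull back to intervals).

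First, by Theorem 5.2.1, $d_I^{(k)}$ on $[\N]^k$ and $d_I^{(m)}$ on $[\N]^m$ both coincide with the summing metric $d_{\mathrm{sum}}$ given by
\[
d_{\mathrm{sum}}(A,B)=\max\big\{|\#(A\cap E)-\#(B\cap E)|:E\text{ an interval of }\N\big\}.
\]
So it suffices to produce a map $\Phi:[\N]^k\to[\N]^m$ that is isometric for $d_{\mathrm{sum}}$.

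Next, set $F=\{1,2,\ldots,m-k\}$ and define
\[
\Phi:[\N]^k\to [\N]^m,\quad \Phi(A)=F\cup\big(A+(m-k)\big),
\]
where $A+(m-k)=\{a+(m-k):a\in A\}$. Since $A+(m-k)\subset\{m-k+1,m-k+2,\ldots\}$ is disjoint from $F$, we have $|\Phi(A)|=m$, so $\Phi$ is well-defined. Moreover $\Phi(A)\cap F=F=\Phi(B)\cap F$ for every $A,B\in[\N]^k$, so for any interval $E$ of $\N$ the contribution of $F$ cancels:
\[
\#(\Phi(A)\cap E)-\#(\Phi(B)\cap E)=\#((A+(m-k))\cap E)-\#((B+(m-k))\cap E).
\]
Finally, the map $E\mapsto E-(m-k)$ is a bijection between intervals of $\N$ meeting $\{m-k+1,m-k+2,\ldots\}$ and intervals of $\N$, and $\#((A+(m-k))\cap E)=\#(A\cap(E-(m-k)))$. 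Taking suprema over intervals gives $d_{\mathrm{sum}}(\Phi(A),\Phi(B))=d_{\mathrm{sum}}(A,B)$, which combined with Theorem 5.2.1 completes the proof.

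I do not foresee any real obstacle here: the only thing to verify is the cancellation of the common padding $F$ and the interval-to-interval behavior of a shift, both of which are immediate from the summing-metric formula supplied by Theorem 5.2.1. This is precisely why the authors remark that the corollary ``could of course be also proven directly very easily''.
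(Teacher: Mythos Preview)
Your proof is correct and is precisely the kind of argument the paper leaves implicit: the paper gives no explicit proof of this corollary, merely placing it after Theorem~\ref{T:5.2.1} and remarking that it ``could of course be also proven directly very easily.'' Your reduction to $d_{\mathrm{sum}}$ via Theorem~\ref{T:5.2.1} and the shift-plus-padding map $\Phi(A)=\{1,\dots,m-k\}\cup(A+(m-k))$ is the natural way to fill this in.

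One minor quibble: the claim that $E\mapsto E-(m-k)$ is a \emph{bijection} between intervals of $\N$ meeting $\{m-k+1,m-k+2,\ldots\}$ and intervals of $\N$ is not quite right as stated (if $\min E\le m-k$ then $E-(m-k)$ is not an interval of $\N$). What you actually use, and what is true, is that for every interval $E$ of $\N$ the set $(E-(m-k))\cap\N$ is an interval of $\N$ (possibly empty), giving $d_{\mathrm{sum}}(\Phi(A),\Phi(B))\le d_{\mathrm{sum}}(A,B)$; and conversely every interval $E'$ of $\N$ arises as $(E-(m-k))\cap\N$ for the interval $E=E'+(m-k)$, giving the reverse inequality. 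This is a cosmetic fix and does not affect the validity of your argument.
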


The following quantitative embedding result immediately implies Theorem \ref{T:D}.

\begin{thm}
\label{univeral for finite}
Let $(X,d)$ be a $n$-point metric space, and $\alpha:=\alpha(X)=\frac{\mathrm{diam}(X)}{\mathrm{sep}(X)}$ be its aspect ratio, where $\mathrm{diam}(X)=\sup\{d_X
(x,y)\colon x,y\in X\}$ and $\mathrm{sep}(X)=\inf\{d_X(x,y)\colon x,y\in X\}$. Then for every $0<\e<1$ and every integer $k\geqslant \big(n + \frac32\big)\big(\frac{\alpha}{\e} + \diam(X) + 1\big)$, $(X,d)$ embeds with distortion $(1-\e)^{-1}$ into $([\N]^{k},d_{\mathrm{I}}^{(k)})$.

In particular, for all $\e>0$ $(X,d)$ embeds with distortion at most $1+\e$ into $([\N]^{<\omega},d_{\mathrm{I}})$.
\end{thm}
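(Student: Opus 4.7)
The strategy is to construct an explicit map $A \colon X \to [\mathbb{N}]^k$ by a Fréchet-style block construction and to exploit the identification $d_I^{(k)} = d_{\s}$ from Theorem~\ref{T:5.2.1}. This reduces the distortion estimate to a one-dimensional oscillation computation for the partial-count function of $A(\cdot)$, which is considerably easier to control than the interlacing distance directly.

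Concretely I would proceed as follows. Let $s := \mathrm{sep}(X)$, pick a scale $\lambda > 0$ with $\lambda s$ of order $1/\varepsilon$, and for each $x \in X$ and each $i \in \{1, \ldots, n\}$ set $m_i(x) := \lfloor \lambda d(x, x_i) + 1/2 \rfloor \in \{0, 1, \ldots, \lceil \lambda D \rceil\}$; the rounding error satisfies $|m_i(x) - \lambda d(x, x_i)| \leq 1/2$. Allocate $n$ disjoint back-to-back blocks $B_1, \ldots, B_n \subset \mathbb{N}$, each of length $L + \lceil \lambda D \rceil$ for a parameter $L$ slightly larger than $\lceil \lambda D \rceil$ (so that all shifted intervals fit inside $B_i$), and define
\[
A(x) \;:=\; \bigcup_{i=1}^{n} \bigl\{\, c_i + m_i(x) + 1,\; c_i + m_i(x) + 2,\; \ldots,\; c_i + m_i(x) + L \,\bigr\},
\]
where $c_i$ is the left endpoint of $B_i$. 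Each $A(x)$ has exactly $nL$ elements; any prescribed $k \geq nL$ is accommodated by appending a common suffix placed to the right of all blocks (which leaves $d_{\s}$ unchanged).

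The heart of the argument is to estimate $d_{\s}(A(x), A(y))$ through $h(t) := |A(x) \cap [1,t]| - |A(y) \cap [1,t]|$, whose oscillation on $\mathbb{N}$ equals $d_{\s}(A(x), A(y))$ by \eqref{summing metric for sets}. Two structural observations drive the computation: inside each block $B_i$, the subsets $A(x) \cap B_i$ and $A(y) \cap B_i$ are translates of a common interval of length $L$, so $h$ performs a single trapezoidal excursion of signed height $m_i(y) - m_i(x)$ and then returns to its pre-block value; and because each block contributes exactly $L$ elements to both $A(x)$ and $A(y)$, the function $h$ equals $0$ at every block boundary. Hence
\[
\operatorname{osc}(h) \;=\; \max_{1 \leq i \leq n} \bigl(m_i(y) - m_i(x)\bigr)_+ \;+\; \max_{1 \leq i \leq n} \bigl(m_i(x) - m_i(y)\bigr)_+ .
\]
Since the Fréchet reference set is $X$ itself, the indices $i_x, i_y$ with $x_{i_x} = x$, $x_{i_y} = y$ witness that each of the two maxima lies in $[\lambda d(x,y) - 1/2,\; \lambda d(x,y) + 1/2]$, while the triangle inequality combined with rounding yields the matching upper bound $|m_i(y) - m_i(x)| \leq \lambda d(x,y) + 1$ for every $i$. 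Combining these, $d_{\s}(A(x), A(y)) \in [\,2\lambda d(x,y) - 1,\; 2\lambda d(x,y) + 2\,]$.

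Viewing the map as an embedding of scale $1/(2\lambda)$, the multiplicative distortion is at most $(1 + O(1/(\lambda d(x,y))))/(1 - O(1/(\lambda d(x,y))))$; calibrating $\lambda$ so that $\lambda s$ is an appropriate multiple of $1/\varepsilon$ makes this $\leq (1-\varepsilon)^{-1}$, and the choice $L \leq \lceil \lambda D \rceil + \lceil D \rceil + 1$ gives the size estimate $nL \leq (n + \tfrac{3}{2})(\alpha/\varepsilon + D + 1)$ after absorbing small constants into the leading $(n + \tfrac{3}{2})$ prefactor. The main anticipated obstacle is the ``factor $2$'' intrinsic to this block construction: because $X$ plays the role of both the domain and the Fréchet reference set, $h$ is forced to attain both a positive peak and a negative trough of magnitude $\lambda d(x,y)$ (at blocks $B_{i_x}$ and $B_{i_y}$), and their ranges add rather than maximise. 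Rather than trying to remove this $2$, I absorb it into the scaling constant $1/(2\lambda)$; the residual $O(1/(\lambda d))$ correction coming from rounding is then of order $\varepsilon$, which after careful calibration of $\lambda$ is exactly what yields distortion $(1-\varepsilon)^{-1}$.
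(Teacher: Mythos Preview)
Your block-shift construction is sound and genuinely different from the paper's argument: the oscillation computation is correct (each block contributes a trapezoidal excursion that returns to zero, so $d_\s(A(x),A(y))=\max_i(m_i(y)-m_i(x))_+ + \max_i(m_i(x)-m_i(y))_+$), and the resulting two-sided bound $d_\s\in[2\lambda d-1,\,2\lambda d+2]$ does yield distortion $(1-\e)^{-1}$ once $\lambda s$ is large enough. The defect is purely quantitative, at the size estimate. Solving $\frac{2\lambda s+2}{2\lambda s-1}\leq(1-\e)^{-1}$ forces $\lambda s\geq\frac{3}{2\e}-1$, hence $\lambda D\approx\frac{3\alpha}{2\e}$ and $nL\approx\frac{3n\alpha}{2\e}$. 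The target $(n+\tfrac32)(\tfrac{\alpha}{\e}+D+1)$ has leading term $\tfrac{n\alpha}{\e}$; the extra factor $\tfrac32$ in your leading term is not a ``small constant'' and cannot be absorbed into the $(n+\tfrac32)$ prefactor once $n\geq4$. So as written your argument proves the ``in particular'' clause, and a version of the theorem with a larger threshold on $k$, but not the stated bound.

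The paper avoids this loss by two devices. First, it quantizes the metric once and for all: choosing an integer $q\approx\tfrac{1}{s\e}$ and passing to an approximating metric $\tilde d$ with $(1-\e)d\leq\tilde d\leq d$ and $2q\tilde d$ even-integer valued, so that the entire $(1-\e)^{-1}$ distortion is paid up front and only an \emph{isometric} embedding remains. Second, for that task it uses a telescoping Fr\'echet map $\Phi_0(x)=\tfrac12\sum_i\bigl(d(x,x_i)-d(x,x_{i+1})\bigr)s_i$ into the summing basis (after adjoining one auxiliary point at distance $D\approx\tfrac12\diam(X)$ from everything); partial sums of $\Phi_0(x)-\Phi_0(y)$ collapse to a single difference $\tfrac12\bigl(d(x,x_k)-d(x,x_{m+1})-d(y,x_k)+d(y,x_{m+1})\bigr)$, so there is no positive-plus-negative-peak doubling, and unary-encoding the shifted coefficients gives $|A(x)|\leq(n+\tfrac32)D$. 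If you prefer to keep your block picture, the minimal fix is to adopt the paper's first device: set $m_i(x)=q\tilde d(x,x_i)$ (exact integers, no rounding); then your construction becomes an exact $2$-dilation in $d_\s$, and $nL\leq n(qD+1)\leq(n+\tfrac32)(\tfrac{\alpha}{\e}+D+1)$ as required.
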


\begin{proof}
The proof of the general situation can be reduced to the special case where $(X,d)$ is a finite metric space with even distances. Assuming that we have proven
\begin{claim}\label{Cl:1}
Assume that for all $x,y\in X$, $d(x,y)$ is an even integer and that $k$ is an integer number such that $k \geqslant \frac12(n+\frac32)(\mathrm{diam(X) + 2})$. Then, the space $(X,d)$ embeds isometrically into $([\N]^{k},d_{\mathrm{I}}^{(k)})$.
\end{claim}
Then we can finish the proof of the general case.
Indeed, let $\vp>0$ and  choose an integer $q$ such that $(\textrm{sep}(X)\e)^{-1}\le q \le (\textrm{sep}(X)\e)^{-1}+1$ and for each $x, y\in X$ define $k_{x,y} = \max\{k\in\N\cup\{ 0\}: k/q\leqslant d(x,y)\}$. Define a metric $\tilde d$ on $X$ with
$$\tilde d(x,y) = \min\left\{\sum_{i=1}^\ell\frac{k_{x_i,x_{i-1}}}{q}: x_i\in X \text{ for } 0\leqslant i\leqslant \ell\text{ and } x=x_0,y = y_\ell\right\}.$$
One can check that $\tilde d$ is indeed a metric and for all $x,y\in X$ we have $$(1-\e)d(x,y)\leqslant \tilde d(x,y)\leqslant d(x,y),$$
hence, it suffices to embed  the space $(X,\tilde d)$ with distortion $1$ into $([\N]^{k},d_{\mathrm{I}}^{(k)})$ for appropriate $k$.
Note that if we denote $\tilde X=(X,\tilde d)$, $\diam(\tilde X)\leqslant \diam(X)$. By Claim \ref{Cl:1}, the space $(X,2q\tilde d)$
embeds isometrically into $([\N]^{k},d_{\mathrm{I}}^{(k)})$, for $k\geqslant \frac12(n + \frac32)(2q\diam(X) + 2)$. Recall that $q \leqslant (\textrm{sep}(X)\e)^{-1}+1$, which implies that
$\frac12(n + \frac32)(2q\diam(X)+ 2) \leqslant (n + \frac32)(\frac{\alpha}{\e} + \diam(X) + 1)$.\end{proof}

\begin{proof}[Proof of Claim \ref{Cl:1}]
We will find an embedding $\Phi$ from $X$ into the linear span of $(s_i)_i$, endowed with the norm \eqref{summing norm}, so that for each $x\in X$
the vector $\Phi(x)$ is of the for form $\sum_{i\in A(x)}s_i$, with $\#A(x)\le  (1/2)(n+3/2)(\mathrm{diam(X) + 2})$.

We enumerate $X = \{x_2,\ldots,x_{n+1}\}$. We add one more point $x_1$ to obtain the set $\tilde X = \{x_1,x_2,\ldots,x_{n+1}\}$. We extend $d$ on $\tilde X$ by setting for  $x\in X$, $d(x_1,x) = d(x,x_1) = D$,
where $D$ is the minimal even integer with $2D\geqslant \mathrm{diam}(X)$. As the diameter of $X$ is an even integer, we deduce $D \leqslant \mathrm{diam}(X)/2+1$.
It is straightforward to verify that the triangle inequality is still satisfied on $\tilde X$.
For notational reasons, we add a ``ghost'' point $x_{n+1}$ with the property $d(x,x_{n+1}) = 0$ for all $x\in X$.
We first define a map $\Phi_0:X\to\langle\{s_i: i\in\N\}\rangle$, the linear span of the $s_i$'s, by
\begin{equation*}
\Phi_0(x) = \frac{1}{2}\sum_{i=1}^{n+1}\left(d(x,x_i)-d(x,x_{i+1})\right)s_i.
\end{equation*}
If we denote by $(s_i^*)_i$ the sequence of coordinate functionals associated to $(s_i)_i$ we observe that for all $i\in\N$ and $x\in X$, the number $s_i^*(\Phi_0(x))$ is an integer.
We start by showing that $\Phi_0(x)$ is an isometric embedding. Let $1\leqslant k\leqslant m\leqslant n+1$ be such that
\begin{align*}
\left\|\Phi_0(x) - \Phi_0(y)\right\| & = \frac{1}{2}\left|\sum_{i=k}^m\left(d(x,x_i) - d(x,x_{i+1} - d(y,x_i) + d(y,x_{i+1})\right)\right|\\
&=\frac{1}{2}\left|d(x,x_{m+1}) - d(y,x_{m+1}) -d(x,x_k) + d(y,x_k)\right|\\
&\leqslant \frac{1}{2}\left|d(x,x_{m+1}) - d(y,x_{m+1})\right| + \frac{1}{2}\left|d(x,x_k) - d(y,x_k)\right|
\leqslant d(x,y).
\end{align*}
For the inverse inequality, let $x = x_j$, $y = x_{j'}$ and let us assume without loss of generality $j < j'$. Define $k = j$ and $m = j' - 1$. Then,
\begin{align*}
\left\|\Phi_0(x) - \Phi_0(y)\right\| & \geqslant \frac{1}{2}\left|\sum_{i=k}^m\left(d(x,x_i) - d(x,x_{i+1}) - d(y,x_i) + d(y,x_{i+1})\right)\right|\\
&= \frac{1}{2}\left|d(x,x_{m+1}) - d(y,x_{m+1}) -d(x,x_k) + d(y,x_k)\right|\\
&= \frac{1}{2}\left|d(x,y) - d(y,y) - d(x,x) + d(y,x)\right| = d(x,y).
\end{align*}

Define $\Phi_1(x) = \Phi_0(x) + D\sum_{i=1}^{n+1}s_i$.
Then $\Phi_1$ is an isometric embedding of $X$ into $\langle\{s_i: i\in\N\}\rangle$ so that for all $i\in\N$ and $x\in X$ the number $s_i^*(\Phi_1(x))$ is a non-negative integer.
For $k=1,\ldots,n+1$ define $$N_k = \max\{s_k^*(\Phi_1(x)): x\in X\}\text{ and }M_k = \sum_{j=1}^kN_j.$$ Also define $M_0 = 0$. We are ready to define the desired embedding.
For $x\in X$ set
$$\Phi(x) = \sum_{k=1}^{n+1}\sum_{\left\{\substack{M_{k-1} < i\leqslant M_{k-1}\\+s_k^*(\Phi_1(x))}\right\}}s_i.$$
We deduce that  $\Phi(x)$ is of the form $\sum_{i\in A(x)}s_i$ with
\begin{align*}
\#A(x) &=\sum_{k=1}^{n+1}s_k^*\left(\Phi_1(x)\right) = \frac{1}{2}\sum_{k=1}^{n+1}\big(d(x,x_i)-d(x,x_{i+1})\big) + D(n+1)\\
&=\frac{1}{2}\left(d(x,x_1) - d(x,x_{n+1})\right) + D(n+1) = \frac{1}{2}D + D(n+1)\\
&=\big(n+\frac32\big)D \leqslant \frac12\big(n+\frac32\big)\big(\mathrm{diam(X) + 2}\big)
\end{align*}
Applying   \eqref{summing norm2} to  $m_i=M_i$,  $i=1,2 \ldots, n$ we deduce for $x,y\in X$ that
\begin{align}\left\|\Phi(x) - \Phi(y)\right\|&=\max\{\Big| \sum_{i=p}^q\sum_{j=M_{i-1}+1}^{M_i}   s^*_j(\Phi(x) - \Phi(y))\Big|:  1\le p\le q\le n\}\\
&=\max\Big\{ \sum_{i=p}^q  s^*_i(\Phi_0(x) - \Phi_0(y))\Big\}=d(x,y).
\end{align}
Finally our conclusion follows therefore from Theorem \ref{T:5.2.1}    and Corollary \ref{increasing length isometrically}.
\end{proof}

\subsection{Metric universality and metric elasticity}
It is a well known and long standing open problem whether $\co$ isomorphically embeds into a Banach space whenever it bi-Lipschitzly embeds into it. Due to Aharoni's theorem this fundamental rigidity problem in nonlinear Banach space geometry can be reformulated as the following universality question.

\begin{prob}\label{P:5.9}
Let $X$ be a Banach space.
If $X$ is Lipschitz universal for the class of separable metric spaces, does $X$ contain an isomorphic copy of $\co$?
\end{prob}

It is possible to answer positively Problem \ref{P:5.9} for Banach lattices using Kalton's work on the interlacing graphs. This fact seems to have been overlooked and we describe the argument in the ensuing discussion. Recall that a Banach space $Y$ has Kalton's property $\cQ$ if there exists $C\in(0,\infty)$ such that for all $k\in \N$ and every Lipschitz map $f$ from  $([\N]^k, d^{(k)}_{\mathrm{I}})$ to $Y$, there exists $\M\in [\N]^\omega$ such that for all $\mb, \nb \in [\M]^k$ we have
\begin{equation}\label{E:12}
\big\| f(\mb)-f (\nb)\big\|_Y\le C\Lip(f).
\end{equation}
Kalton showed that reflexive Banach spaces \cite[Theorem 4.1]{Kalton2007} and more generally, Banach spaces whose unit ball uniformly embeds into a reflexive Banach space \cite[Corollary 4.3]{Kalton2007} have Property $\cQ$. It follows from \eqref{E:12} (and the fact that coarse embeddings $f$ whose domains are graphs must be $\omega(1)$-Lipschitz) that the sequence of interlacing graphs cannot equi-coarsely embed into a Banach space with property $\cQ$. Therefore if a Banach space $X$ equi-coarsely contains the interlacing graphs, it must fail property $\cQ$. By \cite[Corollary 4.3]{Kalton2007}, the unit ball of $X$ does not uniformly embed into a reflexive Banach space. But Kalton also proved \cite[Theorem 3.8]{Kalton2007}  that for a separable Banach lattice $X$, $B_X$ uniformly embeds into a reflexive Banach space if and only if $X$ does not contain any subspace isomorphic to $\co$. Thus, it follows from the above discussion that:

\begin{thm}[\cite{Kalton2007}]\label{T:5.10} If $X$ is a separable Banach lattice and if $([\N]^k,d_{\mathrm{I}})_k$ equi-coarsely embeds into $X$, then $X$ contains an isomorphic copy of $\co$.
\end{thm}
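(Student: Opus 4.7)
The plan is to argue the contrapositive and assemble the three ingredients already laid out in the paragraph preceding Theorem \ref{T:5.10}: Kalton's Property $\cQ$ for spaces whose unit ball uniformly embeds into a reflexive Banach space, the incompatibility of Property $\cQ$ with equi-coarse containment of the interlacing graphs, and Kalton's lattice dichotomy that characterizes $c_0$-freeness of a separable Banach lattice $X$ via uniform embeddability of $B_X$ into a reflexive space.

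So I would suppose that $X$ contains no isomorphic copy of $c_0$. Since $X$ is a separable Banach lattice, \cite[Theorem 3.8]{Kalton2007} produces a reflexive Banach space $R$ into which $B_X$ uniformly embeds, and then \cite[Corollary 4.3]{Kalton2007} endows $X$ with Property $\cQ$: there is a constant $C\in(0,\infty)$ such that for every $k\in\N$ and every Lipschitz map $f\colon ([\N]^k,d_{\mathrm{I}}^{(k)})\to X$ one can extract $\M=\M(f)\in[\N]^\omega$ with
\begin{equation*}
\|f(\mb)-f(\nb)\|_X\leq C\,\Lip(f)\qquad\text{for every }\mb,\nb\in[\M]^k.
\end{equation*}

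Next I would contradict the equi-coarse embeddability hypothesis. Let $(f_k\colon ([\N]^k,d_{\mathrm{I}}^{(k)})\to X)_{k\in\N}$ witness the equi-coarse embedding with common moduli $\rho,\omega$, so $\rho(t)\to\infty$ as $t\to\infty$. Since each $d_{\mathrm{I}}^{(k)}$ is a graph metric, $\Lip(f_k)\leq\omega(1)$ for every $k$, and Property $\cQ$ therefore delivers an $\M_k\in[\N]^\omega$ with $\|f_k(\mb)-f_k(\nb)\|_X\leq C\omega(1)$ for all $\mb,\nb\in[\M_k]^k$. Now pick the first $2k$ elements $m_1<m_2<\cdots<m_{2k}$ of $\M_k$ and set $\mb=\{m_1,\ldots,m_k\}$, $\nb=\{m_{k+1},\ldots,m_{2k}\}$; by Theorem \ref{T:5.2.1} together with \eqref{summing metric for sets}, the interval $E=[m_1,m_k]$ shows $d_{\mathrm{I}}^{(k)}(\mb,\nb)=d_\s(\mb,\nb)=k$. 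The compression bound of the equi-coarse embedding then forces $\rho(k)\leq C\omega(1)$ for every $k$, contradicting $\rho(k)\to\infty$. Hence $X$ must contain an isomorphic copy of $c_0$.

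I expect no genuine obstacle: the deep mathematical content is entirely wrapped up in the cited Kalton results, and the only verification performed inside this proof is the identification of the interlacing diameter of $[\M_k]^k$ with $k$, which is immediate from the summing-norm interpretation established in Theorem \ref{T:5.2.1}.
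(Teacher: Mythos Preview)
Your proof is correct and follows exactly the argument sketched in the paragraph preceding Theorem~\ref{T:5.10}; the paper gives no separate proof beyond that discussion, and you have merely made explicit the final step (finding $\mb,\nb\in[\M_k]^k$ at interlacing distance $k$ to force $\rho(k)\le C\omega(1)$).
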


The following statement is an immediate consequence of Theorem \ref{T:5.10}.

\begin{cor}\label{C:5.11}
If a separable Banach lattice $X$ is coarsely universal for the class of separable metric spaces, then $X$ contains an isomorphic copy of $\co$.
\end{cor}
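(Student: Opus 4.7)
The plan is to reduce directly to Theorem \ref{T:5.10}. Since $X$ is coarsely universal for the class of separable metric spaces and $c_0$ is separable, there exists a coarse embedding $f\colon c_0 \to X$. The remaining task is to verify that the family of interlacing graphs $([\N]^k, d^{(k)}_{\mathrm{I}})_k$ equi-coarsely embeds into $X$, so that Theorem \ref{T:5.10} applies.

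To produce the equi-coarse embedding, I would first use Theorem \ref{T:5.2.1} to identify, for each $k$, the metric space $([\N]^k, d^{(k)}_{\mathrm{I}})$ with the subset $\{\sum_{i\in A} s_i : A \in [\N]^k\}$ of $(\mathrm{span}(s_i), \|\cdot\|_{\mathrm{sum}})$ via the isometric map $A \mapsto \sum_{i\in A} s_i$. The bimonotone summing norm is equivalent, up to a universal constant, to the canonical $c_0$-norm on $\mathrm{span}(s_i)$ (since $\sup_n|\sum_{i\leq n} a_i| \leq \|\sum a_i s_i\|_{\mathrm{sum}} \leq 2\sup_n|\sum_{i\leq n} a_i|$, and the non-bimonotone version renders $\mathrm{span}(s_i)$ isometric to a dense subspace of $c_0$ through the partial-sum change of variables). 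Consequently, $A \mapsto \sum_{i\in A} s_i$ provides a bi-Lipschitz embedding of $([\N]^k, d^{(k)}_{\mathrm{I}})$ into $c_0$ with distortion bounded by a constant independent of $k$; that is, an equi-bi-Lipschitz embedding of the whole family.

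Composing these equi-bi-Lipschitz embeddings with the coarse embedding $f\colon c_0 \to X$ yields an equi-coarse embedding of $([\N]^k, d^{(k)}_{\mathrm{I}})_k$ into $X$. Invoking Theorem \ref{T:5.10} then delivers an isomorphic copy of $c_0$ inside $X$, as desired. No substantive obstacle is encountered: all of the deep content is absorbed into Theorem \ref{T:5.10}, which combines Kalton's property $\mathcal{Q}$ with the Banach lattice dichotomy \cite[Theorem 3.8]{Kalton2007}; the present argument merely feeds the correct input into that theorem by routing through $c_0$ and the summing-basis identification of $d_{\mathrm{I}}$.
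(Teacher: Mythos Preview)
Your proposal is correct and follows essentially the same route the paper has in mind: the paper states that Corollary~\ref{C:5.11} is ``an immediate consequence of Theorem~\ref{T:5.10},'' and you have simply spelled out that immediacy by routing the interlacing graphs through $c_0$ via Theorem~\ref{T:5.2.1} and the summing basis, then composing with a single coarse embedding $c_0\to X$ to obtain the equi-coarse embedding required by Theorem~\ref{T:5.10}. The only minor simplification available is to observe that all the $([\N]^k,d_{\mathrm{I}}^{(k)})$ sit isometrically inside the single separable metric space $([\N]^{<\omega},d_{\mathrm{I}})$, which already yields the equi-coarse embedding without explicitly comparing the summing norm to the $c_0$-norm; but this is a cosmetic difference, not a genuine alternative.
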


Thus, Problem \ref{P:5.9} (as well as its coarse analogue) has a positive solution for Banach lattices.
It is worth pointing out that the coarse (resp. uniform) analogue of Problem \ref{P:5.9} does not hold in general since using the theory of H\"{o}lder free spaces, it is proved in \cite{Kalton2004} that $\co$ coarsely (resp. uniformly) embeds into a Schur space. Recall that a Banach space has the Schur property if every weakly null sequence converges to $0$ in the norm topology, and hence a Schur space cannot contain any isomorphic copy of $\co$.
\begin{rem} The Lipschitz version of
Corollary \ref{C:5.11} can be proven for a Banach space with an unconditional basis using classical linear and nonlinear Banach space theory. Indeed, for Banach spaces with an unconditional basis the following dichotomy holds: either the unconditional basis is not boundedly-complete, and by a result of James \cite{James1950} $X$ will contain an isomorphic copy of $\co$, or the unconditional basis is boundedly-complete and hence $X$ will be isomorphic to a dual space and thus $X$ will have the Radon-Nikod\'ym property.
Note that the two possibilities are mutually exclusive. In the first situation the conclusion of Corollary \ref{C:5.11} already holds, and in the second situation we can use classical differentiability theory and obtain a contradiction. A similar dichotomy argument fails for Banach lattices since $L_1$ is a Banach lattice that does not linearly contain $\co$ and yet $L_1$ does not have the Radon-Nikod\'ym property.
\end{rem}

Theorem \ref{T:5.10} has also an application to metric analogues of the linear notion of elasticity. In 1976 Sch\"affer raised the problem whether the isomorphism class of every infinite dimensional Banach space $X$ is unbounded in the sense that $D(X):=\sup\{d_{BM}(Y,Z)\colon Y, Z \textrm{ are isomorphic to } $X$\}=\infty$ where $d_{BM}$ denotes the Banach-Mazur distance \footnote{This widely use terminology can be misleading since $\log(d_{BM})$ (and not $d_{BM}$) is a semi-metric.}. Johnson and Odell introduced the notion of elasticity for their solution of Sch\"affer's problem for \emph{separable} Banach spaces.

\begin{defin}[\cite{JohnsonOdell2005}]\label{D:5.11}
Let $K\in[1,\infty)$. A Banach space $Y$ is \emph{$K$-elastic} provided that if a Banach space $X$ isomorphically embeds into $Y$ then $X$ must be $K$-isomorphically embeddable into $Y$, and $Y$ will be \emph{elastic} if it is $K$-elastic for some $K$.
\end{defin}

The connection with Sch\"affer's problem comes from the observation that if $D(X)<\infty$ then $X$ as well as every isomorphic copies of $X$ is $D(X)$-elastic. Elasticity is intimately connected to universality. First of all, it is immediate that every Banach space is crudely finitely representable into any elastic Banach space, in particular every elastic Banach space has trivial cotype. Second of all, a consequence of Banach-Mazur embedding theorem is that $C[0,1]$ is $1$-elastic. Moreover, a key step in \cite{JohnsonOdell2005} is the following theorem.

\begin{thm}\cite[Theorem 7]{JohnsonOdell2005}\label{T:5.12}
Let $X$ be a separable infinite-dimensional Banach space. If $X$ is elastic then $\co$ embeds isomorphically into $X$.
\end{thm}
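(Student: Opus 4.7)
The plan is to prove the contrapositive: assume $X$ is separable, infinite-dimensional, $K$-elastic for some $K \geq 1$, and $c_0 \not\hookrightarrow X$, and derive a contradiction. By property~(3) of Bourgain's $c_0$-index recalled in the preliminaries, the assumption $c_0 \not\hookrightarrow X$ amounts to $I_{c_0}(X) = \alpha_0$ for some countable ordinal $\alpha_0$. The contradiction will come from locating, inside $X$, subspaces whose $c_0$-indices exceed $\alpha_0$; indeed, by properties~(1)--(2) of the index, $I_{c_0}$ is subspace-monotone and isomorphism-invariant up to a function of the distortion, so any subspace of $X$ gives a lower bound for $I_{c_0}(X)$.

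The natural target spaces are the higher-order Tsirelson spaces $T_\alpha^*$, which are separable, reflexive, asymptotic-$c_0$, and satisfy $I_{c_0}(T_\alpha^*) \to \omega_1$ as $\alpha \to \omega_1$ according to the results recalled from \cite{OdellSchlumprechtZsak2007}. If one can show that $T_\alpha^*$ isomorphically embeds into $X$ for every $\alpha < \omega_1$, then $K$-elasticity forces each such embedding to realize with distortion at most $K$, hence $I_{c_0}(X) \geq I_{c_0}(T_\alpha^*)$ up to a function of $K$, and letting $\alpha$ tend to $\omega_1$ yields $I_{c_0}(X) = \omega_1$, contradicting $I_{c_0}(X) = \alpha_0$. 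I would produce the embeddings by transfinite induction on $\alpha$. The base case leans on the trivial cotype of $X$ (a consequence of elasticity noted in the paragraph preceding Theorem~\ref{T:5.12}): $\ell_\infty^n$'s sit in $X$ with $n$-independent distortion, and a standard block-basis extraction yields a Schauder basic sequence $(u_i)\subset X$ whose initial segments are uniformly equivalent to $(\ell_\infty^n)$-bases. At a successor ordinal one extracts a further block basis from the already-embedded copy of $T_\alpha^*$ and assembles its blocks according to the Schreier-family admissibility condition defining $T_{\alpha+1}^*$; a limit ordinal is handled by a diagonal block extraction over an increasing sequence $\alpha_n \nearrow \alpha$.

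The main obstacle is the inductive step: elasticity only uniformizes the distortion of spaces that \emph{already} embed into $X$; by itself it does not produce new embeddings. One must therefore first construct, by hand, a crude (distortion-$C_\alpha < \infty$) isomorphic copy of $T_{\alpha+1}^*$ inside $X$, using the trivial-cotype $\ell_\infty$-estimates together with the freedom to sparsify the block basis carrying the previously embedded $T_\alpha^*$, and only then invoke elasticity to upgrade the crude distortion to the uniform constant $K$. Executing this transfinite gluing with a crude constant $C_\alpha$ that survives at every successor ordinal --- so that the final $K$-elastic upgrade is available to produce the contradiction with $I_{c_0}(X) = \alpha_0$ --- is the technical heart of the argument. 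It is precisely here that the single scalar $K$ furnished by elasticity is converted into an unbounded ordinal-valued hierarchy of asymptotic-$c_0$ structure inside $X$, which is the tension making $c_0 \not\hookrightarrow X$ untenable.
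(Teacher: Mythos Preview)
The paper does not contain a proof of this theorem: it is quoted verbatim as \cite[Theorem 7]{JohnsonOdell2005} and used only as motivation for the metric-elasticity discussion that follows. There is therefore no ``paper's own proof'' to compare your proposal against.

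That said, your proposal has a genuine gap at the point you yourself flag. Elasticity lets you upgrade the distortion of an embedding \emph{once you already have one}; it never manufactures an embedding that was not there. Your inductive step assumes you can build a crude copy of $T_{\alpha+1}^*$ inside $X$ from a copy of $T_\alpha^*$ by ``sparsifying the block basis'' and using $\ell_\infty$-estimates from trivial cotype. But block sequences inside $T_\alpha^*$ stay in $T_\alpha^*$; they do not assemble into a copy of the strictly more complex space $T_{\alpha+1}^*$, and trivial cotype only hands you disjoint finite $\ell_\infty^n$-blocks in $X$, not a mechanism for gluing them into a single basic sequence realizing the $\mathrm{S}_{\alpha+1}$-admissibility condition. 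In fact, your \emph{base case} already overshoots: a Schauder basic sequence whose every initial segment is uniformly equivalent to the $\ell_\infty^n$-basis is, by definition, equivalent to the $c_0$-basis --- so if that step worked it would finish the theorem outright, not merely start an induction. The difficulty is precisely that trivial cotype does not yield such a sequence; the Johnson--Odell argument gets around this by a different iteration (renorming and re-embedding, tracking how the embedding interacts with a fixed basis of $X$), not by climbing the Tsirelson hierarchy.
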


The conjecture from \cite{JohnsonOdell2005} that a separable elastic Banach space must contain an isomorphic copy of $C[0,1]$, was recently solved positively by Alspach and Sari \cite{AlspachSari2016}.

We now discuss a metric analogue of Theorem \ref{T:5.12}. According to Johnson and Odell a Banach space $Y$ is said to be \emph{Lipschitz K-elastic} provided that if a Banach space is isomorphic to $Y$ then $X$ must bi-Lipschitzly embed into $Y$ with distortion at most $K$. Johnson and Odell definition of Lipschitz elasticity is motivated by the fact that a Banach space $X$ is $K$-elastic if and only if every isomorphic copy of $X$ is $K$-isomorphic to a subspace of $X$ (the proof uses an Hahn-Banach extension argument that goes back to Pelczy\'nski \cite{Pelczynski1960}). It was observed in \cite{JohnsonOdell2005} that it follows from Aharoni's embedding theorem and James' distortion theorem, that there exists $K\ge 1$ such that every Banach space that contains an isomorphic copy of $\co$ must be Lipschitz $K$-elastic. The constant $K$ is related to the optimal distortion in Aharoni's embedding and can be taken to be $2+\vp$ for every $\vp>0$ due to \cite{KaltonLancien2008}. Motivated by Definition \ref{D:5.11} the following definition is another metric analogue of elasticity.

\begin{defin}
Let $K\in[1,\infty)$. A metric space $Y$ is \emph{metric $K$-elastic} provided that if a metric  space $X$ bi-Lipschitzly embeds into $Y$ then $X$ must be bi-Lipschitzly embeddable into $Y$ with distortion at most $K$, and $Y$ will be \emph{metric elastic} if it is metric $K$-elastic for some $K$.
\end{defin}

It is immediate that a Banach space that is metric $K$-elastic (as a metric space) is Lipschitz $K$-elastic. With this stronger nonlinear notion of elasticity we obtain the following theorem, which contains a strong nonlinear analogue of Theorem \ref{T:5.12} in the context of Banach lattices.

\begin{thm}\label{T:5.14}
Let $X$ be a separable infinite-dimensional Banach lattice. The following assertions are equivalent.
\begin{enumerate}
\item $\co$ isomorphically embeds into $X$.
\item $\co$ bi-Lipschitzly embeds into $X$.
\item $\co$ coarsely embeds into $X$.
\item $X$ is metric elastic.
\item $([\N]^k,d_{\mathrm{I}})_{k\in\N}$ embeds equi-bi-Lipschitzly into $X$.
\item $([\N]^k,d_{\mathrm{I}})_{k\in\N}$ embeds equi-coarsely into $X$.
\end{enumerate}
\end{thm}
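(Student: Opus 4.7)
The plan is to organize the six conditions into a web of easy implications plus two genuinely nontrivial directions. The trivial implications are $(1)\Rightarrow(2)\Rightarrow(3)$ and $(5)\Rightarrow(6)$. For the remaining connections among $(1),(2),(3),(5),(6)$, I would use the isometric embedding of $([\N]^{<\omega},d_I)$ into $(c_0,\|\cdot\|_\mathrm{sum})$ via $A\mapsto\sum_{i\in A}s_i$ provided by Theorem~\ref{T:5.2.1}, together with the fact that $\|\cdot\|_\mathrm{sum}$ is $2$-equivalent to the canonical $c_0$-norm: composing this embedding with any isomorphic (resp.\ coarse) embedding of $c_0$ into $X$ yields equi-bi-Lipschitz (resp.\ equi-coarse) embeddings of all $([\N]^k,d_I)$ into $X$. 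This gives $(1)\Rightarrow(5)$ and $(3)\Rightarrow(6)$. The closing implication $(6)\Rightarrow(1)$ is precisely Theorem~\ref{T:5.10}, which is where the Banach-lattice hypothesis first enters, through Kalton's property~$\mathcal Q$.

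To include (4), the direction $(1)\Rightarrow(4)$ follows from Aharoni's embedding theorem in the sharp form of \cite{KaltonLancien2008}: if $c_0$ embeds isomorphically into $X$ with constant $C$, then every separable metric space bi-Lipschitz embeds into $X$ with distortion at most $(2+\varepsilon)C$ by composition, and since any metric space that bi-Lipschitz embeds into the separable space $X$ is itself separable, $X$ is metric $((2+\varepsilon)C)$-elastic.

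The main challenge is $(4)\Rightarrow(1)$, and my plan is to route through finite representability of $c_0$ in $X$. First, metric $K$-elasticity of $X$ implies Lipschitz $K$-elasticity, as recorded after Definition~\ref{D:5.11}. Second, since $X$ is infinite-dimensional, for every $n$ the space $\ell_\infty^n$ is isomorphic to some $n$-dimensional subspace of $X$ (for instance, by combining Dvoretzky's theorem with John's theorem), so Lipschitz $K$-elasticity forces bi-Lipschitz embeddings of $\ell_\infty^n$ into $X$ with distortion at most $K$ uniformly in $n$. Third, a Heinrich--Mankiewicz-style linearization---which for the finite-dimensional source $\ell_\infty^n$ is essentially a blow-up at a point combined with weak-$*$ compactness in $X^{**}$---followed by the principle of local reflexivity, will upgrade these uniform bi-Lipschitz embeddings to uniform \emph{isomorphic} embeddings of $\ell_\infty^n$ into $X$ with distortion at most some $K'$ depending only on $K$. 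Consequently $c_0$ is finitely representable in $X$, i.e., $X$ has trivial cotype. Finally, for a Banach lattice, trivial cotype forces an isomorphic copy of $c_0$: this is the classical combination of the Maurey--Pisier theorem with the Banach-lattice principle that uniformly equivalent $\ell_\infty^n$-copies produce a $c_0$-equivalent disjoint sequence. The hardest step is the third one---the Ribe-program transition from uniform Lipschitz embedding to uniform isomorphic embedding, where the finite-dimensionality of the source $\ell_\infty^n$ is crucial to avoid the gap between $X^{**}$ and $X$ in general.
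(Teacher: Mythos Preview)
Your treatment of $(1)$--$(3)$, $(5)$, $(6)$ is fine; deriving $(5)$ from $(1)$ via the summing basis and Theorem~\ref{T:5.2.1} is a clean alternative to the paper's route.

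Where you diverge from the paper is in handling $(4)$, and here there is a genuine gap. First, a small slip: in your second step you invoke \emph{Lipschitz} $K$-elasticity to force uniform bi-Lipschitz copies of $\ell_\infty^n$, but Lipschitz elasticity as defined in the paper only controls Banach spaces \emph{isomorphic to $X$}, and $\ell_\infty^n$ is not. You must use metric elasticity directly there; the detour through Lipschitz elasticity loses exactly the property you need. More seriously, your linearization step does not work as sketched. Blowing up $f\colon\ell_\infty^n\to X$ at a point and taking weak-$*$ limits in $X^{**}$ yields a linear $T\colon\ell_\infty^n\to X^{**}$ with $\|T\|\le\mathrm{Lip}(f)$, but weak-$*$ lower semicontinuity of the norm runs the \emph{wrong way} for the lower bi-Lipschitz bound: from $\|(f(x_0+tv)-f(x_0))/t\|\ge c\|v\|$ and weak-$*$ convergence to $T(v)$ you cannot conclude $\|T(v)\|\ge c\|v\|$, so $T$ may well be degenerate. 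The implication you are after (uniform bi-Lipschitz copies of $\ell_\infty^n$ force uniform \emph{linear} copies) is true, but its proof requires either the full Ribe machinery done carefully or the Mendel--Naor metric cotype theorem; neither is ``blow-up plus weak-$*$ compactness.''

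The paper sidesteps all of this by exploiting the fact that metric elasticity is a hypothesis about arbitrary metric spaces, not just Banach spaces. Each $([\N]^k,d_I)$ is countable, $1$-separated, and of diameter $k$, so it bi-Lipschitzly embeds (with distortion $O(k)$) into any infinite-dimensional $X$ via any injection into a $1$-separated sequence of unit vectors; metric $K$-elasticity then forces the distortion down to $K$ uniformly in $k$. This gives $(4)\Rightarrow(5)$ in one line, and $(5)\Rightarrow(6)\Rightarrow(1)$ closes the loop with no linearization whatsoever.
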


\begin{proof}
(1) implies (2) implies (3) is trivial. (3) implies (1) is Corollary \ref{C:5.11}.  (2) implies (4) follows from Aharoni's embedding theorem and the fact that separability is a Lipschitz invariant. For (4) implies (5), observe that an infinite-dimensional Banach space $X$ has a $1$-separated sequence of unit vectors, and thus for all $k\in\N$, the $k$-dimensional interlacing graph $([\N]^k,d_{\mathrm{I}})$ (which is countable, $1$-separated, and has diameter $k$) embeds bi-Lipschitzly into $X$ with distortion at most $k$. Since $X$ is metric $K$-elastic for some $K\ge 1$, it follows that $\sup_{k\in\N}c_X(([\N]^k,d_{\mathrm{I}}))\le K$. For Banach spaces, (5) implies (6) always holds. An appeal to Corollary \ref{T:5.10} gives the remaining implication.
\end{proof}

\subsection{Separating interlacing graphs in Banach spaces with nonseparable biduals}
The following concentration result for interlacing graphs was shown by Kalton  \cite{Kalton2007}.
\begin{thm}\label{T:2}  \cite[Theorem 3.5]{Kalton2007}. Let $k\in \N$ and $Y$ be a Banach space such that $Y^{(2k)}$, the iterated dual of order $2k$ of $Y$, is separable. Assume that $(g_i)_{i\in I}$ is an uncountable family of $1$-Lipschitz maps from  $([\N]^k, d^{(k)}_{\mathrm{I}})$ to $Y$. Then there exist $i\neq j \in I$ and $\M\in [\N]^\omega$ such that for all $\nb \in [\M]^k$ we have
$$\big\| g_i(\nb)-g_j (\nb)\big\|\le 3.$$
\end{thm}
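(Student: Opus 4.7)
My approach follows Kalton's original strategy: manufacture, for each $g_i$, a canonical element in the high dual $Y^{(2k)}$ via iterated weak${}^*$-limits, use separability at the top to pair two maps, and then back-propagate this pairing down to an infinite subset $\M\subset\N$ along which $g_i-g_j$ remains uniformly small in norm.

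The first phase is to build, for every $i\in I$, an element $\eta_i\in Y^{(2k)}$ as an iterated weak${}^*$-limit. Fix a non-principal ultrafilter $\cU$ on $\N$. Using the canonical isometric inclusions $Y\hookrightarrow Y^{**}\hookrightarrow Y^{(4)}\hookrightarrow\cdots\hookrightarrow Y^{(2k)}$, I set $g_i^{(0)}=g_i$ and recursively, for $0\le j<k$, define $g_i^{(j+1)}\colon [\N]^{k-j-1}\to Y^{(2j+2)}$ by
\[
g_i^{(j+1)}(n_1,\ldots,n_{k-j-1}) \;=\; w^{*}\text{-}\lim_{n_{k-j}\to\cU,\ n_{k-j}>n_{k-j-1}}\ g_i^{(j)}(n_1,\ldots,n_{k-j}),
\]
the limit being taken in the dual space $Y^{(2j+2)}=(Y^{(2j+1)})^{*}$. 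Each limit exists by Banach--Alaoglu, thanks to the uniform norm bound (relative to any fixed base vertex) inherited from the $1$-Lipschitz assumption, and the iteration is coherent because the canonical embedding of $Y^{(2j)}$ into $Y^{(2j+2)}$ is weak${}^*$-to-weak${}^*$ continuous on bounded sets. After $k$ iterations I obtain $\eta_i := g_i^{(k)}\in Y^{(2k)}$. The separability of $Y^{(2k)}$ together with the uncountability of $I$ then allows me to pigeonhole: for any preassigned $\varepsilon>0$, there exist $i\neq j\in I$ with $\|\eta_i-\eta_j\|_{Y^{(2k)}}<\varepsilon$.

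The heart of the argument is the second phase: converting this top-level closeness into a uniform norm bound on $\|g_i(\nb)-g_j(\nb)\|$ for $\nb\in[\M]^k$. I would build $\M=\{m_1<m_2<\cdots\}$ by induction. At the inductive stage producing $m_r$, I maintain a finite list of norming functionals drawn from $Y^{*},Y^{(3)},\ldots,Y^{(2k-1)}$ that witness both $\|\eta_i-\eta_j\|<\varepsilon$ and the various limiting identities $g_i^{(j+1)} = w^{*}\text{-}\lim g_i^{(j)}$. Choosing $m_r$ sufficiently far along the ultrafilter ensures that, tested against these functionals, $g_i^{(j)}-g_j^{(j)}$ is well-approximated at every level by its next-level limit; Mazur's theorem then upgrades these weak approximations to norm approximations by convex combinations. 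Combined with the $1$-Lipschitz hypothesis on both $g_i$ and $g_j$, this forces $g_i(\nb)-g_j(\nb)$ to lie within distance $3$ of $\eta_i-\eta_j$, hence within $3+\varepsilon$ of $0$, for every $\nb\in[\M]^k$; shrinking $\varepsilon$ gives the claimed bound.

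The main obstacle is this second phase: one must thread the diagonal extraction through all $k$ levels of iterated limits in a manner compatible with the \emph{interlacing} (rather than merely lex-last) order on $[\N]^k$, so that any two tuples $\nb,\nb'\in[\M]^k$ can be linked by a short chain of interlacing neighbors along which $g_i-g_j$ is almost constant. Keeping the final constant equal to the universal value $3$, independently of $k$, hinges on the fact that $\eta_i-\eta_j$ is captured \emph{globally} in $Y^{(2k)}$ and that the Lipschitz cost of any single interlacing step is at most $1$; the extraction only needs to absorb two such Lipschitz corrections, one at each end of the interlacing chain linking a generic $\nb$ to the ``reference'' tuple where the Mazur convex combination concentrates.
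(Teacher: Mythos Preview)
The paper does not prove this theorem; it is quoted verbatim from Kalton \cite[Theorem 3.5]{Kalton2007} and used as a black box in the proof of Theorem \ref{T:5.8}. So there is no ``paper's own proof'' to compare against, and I will assess your sketch on its own merits.

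Your first phase --- constructing $\eta_i\in Y^{(2k)}$ via iterated weak${}^*$-ultralimits and then invoking separability of $Y^{(2k)}$ to find $i\neq j$ with $\|\eta_i-\eta_j\|$ small --- is correct and is indeed Kalton's opening move.

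Your second phase, however, has a genuine gap. The invocation of Mazur's theorem is a red herring: you are dealing with weak${}^*$ limits in successive biduals, not weak limits, so Mazur does not apply, and even if it did, passing to convex combinations of values of $g_i^{(j)}$ would sever the link to specific tuples $\nb\in[\M]^k$. More seriously, you assert without justification that only ``two Lipschitz corrections'' separate $g_i(\nb)-g_j(\nb)$ from $\eta_i-\eta_j$. The naive telescoping estimate --- using $\|g^{(j)}(\nb')-g^{(j+1)}(\nb'')\|\le 1$ at each of the $k$ levels, which is all that weak${}^*$ lower semicontinuity plus the $1$-Lipschitz bound gives --- yields only $\|g_i(\nb)-g_j(\nb)\|\le 2k+\|\eta_i-\eta_j\|$, a constant that grows with $k$. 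The interlacing relation does \emph{not} let you reach the iterated limit in a single step: interlacing with $\nb=(n_1,\ldots,n_k)$ forces $m_j\le n_{j+1}$, which is incompatible with sending the $m_j$ to infinity along $\cU$. Obtaining the $k$-independent constant $3$ is exactly the nontrivial content of Kalton's proof, and it requires a more delicate inductive extraction than what you describe. Finally, ``shrinking $\varepsilon$ gives the claimed bound'' is circular: $\varepsilon$ must be fixed \emph{before} the pair $i,j$ is selected via the pigeonhole on $Y^{(2k)}$.
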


Vaguely speaking, it follows from Theorem \ref{T:2} that if a Banach space $X$ contains uncountably many well-separated $1$-Lipschitz images of the interlacing graphs and if $X$ coarsely embeds into a Banach space $Y$, then $Y$ cannot have all its iterated duals separable. This idea was devised by Kalton in \cite{Kalton2007} to show that if $c_0$ coarsely embeds into a Banach space $Y$, then one of the iterated duals of $Y$ is non separable (in particular, $Y$ cannot be reflexive). It was adapted in \cite{LancienPetitjeanProchazka2019} to show that the same conclusion holds if the James tree space $JT$ or its predual coarsely embeds into $Y$. In these proofs the non separability of the bidual of the embedded space plays an important role. However, since $\ell_1$ coarsely embeds into $\ell_2$, this is not a sufficient condition. We will prove that a certain presence of $\ell_1$ in the embedded space is essentially the only obstruction.

\begin{thm}\label{T:5.8}{\rm[Theorem F]}  Let  $X$ be a separable Banach space with non separable bidual $X^{**}$, $\ell_1\not\subset X$, and such that no spreading model generated by  a normalized  weakly null sequence in $X$  is equivalent to the $\ell_1$-unit vector basis. Assume that $X$ coarsely embeds into a Banach space $Y$. Then there exists $k\in \N$ such that $Y^{(2k)}$ is non separable.
\end{thm}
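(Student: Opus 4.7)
The plan is to adapt Kalton's argument for $c_0$ from \cite[Section~4]{Kalton2007}, recently extended to $JT$ in \cite{LancienPetitjeanProchazka2019}. The idea is: for every $k\in\N$, construct an uncountable family of uniformly $C$-Lipschitz maps $\Phi_i^k:([\N]^k,d_{\mathrm{I}}^{(k)})\to X$ whose images remain $\delta k$-separated uniformly on $[\M]^k$ for some infinite $\M\subset\N$, with $C,\delta$ independent of $k$. Composing with the coarse embedding $\varphi:X\to Y$ and dividing by $\omega_\varphi(Ck)$ produces $1$-Lipschitz maps $g_i^k:[\N]^k\to Y$ whose pointwise pairwise separation is at least $\omega_\varphi(Ck)^{-1}\rho_\varphi(\delta k)$, which tends to infinity in $k$ once $\rho_\varphi$ grows superlinearly enough (coarse embeddings on graphs are Lipschitz, so one may arrange this). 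If every even iterated dual $Y^{(2k)}$ were separable, Theorem~\ref{T:2} applied with parameter $k$ would yield, for each $k$, a pair $i\neq j$ and an infinite $\M_k$ on which the $g_i^k$ stay within $3$ of each other, contradicting the separation for $k$ large enough and thus proving the theorem.

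To build the family, first use the nonseparability of $X^{**}$ together with a biorthogonal/exhaustion argument to extract an uncountable bounded family $(\eta_i)_{i\in I}\subset X^{**}$ with $\|\eta_i-\eta_j\|\geq 4\delta$ for $i\neq j$. Since $X$ is separable and $\ell_1\not\subset X$, the Odell--Rosenthal theorem realises each $\eta_i$ as the weak$^*$ limit of a bounded weakly Cauchy sequence $(x_n^i)\subset X$. Using Theorem~\ref{T:5.2.1} to identify $d_{\mathrm{I}}^{(k)}$ with the restriction of the summing distance $d_\s$ to $[\N]^k$, define
\[
\Phi_i^k(\{n_1<\cdots<n_k\}) = \sum_{r=1}^k x_{n_r}^i.
\]
The separation is obtained through a Galvin--Prikry stabilization on pairs $(i,j)$ and by evaluating against a functional $\phi_{i,j}\in X^*$ witnessing $\langle \eta_i-\eta_j,\phi_{i,j}\rangle\geq 2\delta$: for $\nb=\{n_1<\cdots<n_k\}$ with $n_1$ sufficiently large,
\[
\phi_{i,j}\bigl(\Phi_i^k(\nb)-\Phi_j^k(\nb)\bigr)=\sum_{r=1}^k\phi_{i,j}(x_{n_r}^i-x_{n_r}^j)\geq k\delta,
\]
yielding $\|\Phi_i^k(\nb)-\Phi_j^k(\nb)\|\geq \delta k$.

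The main obstacle is the uniform Lipschitz bound $\|\Phi_i^k(A)-\Phi_i^k(B)\|\leq C\,d_{\mathrm{I}}^{(k)}(A,B)$. Writing this difference as an alternating sum over $A\triangle B$ and performing an Abel summation reduces it to controlling $\|\sum_s c_s\,d_s^i\|$, where $d_s^i:=x_s^i-x_{s-1}^i$ are the bounded weakly null differences and $(c_s)$ is a piecewise-constant coefficient sequence with $\|c\|_\infty\leq d_{\mathrm{I}}^{(k)}(A,B)$ (the supremum bound on tail counts of $A\triangle B$ is exactly the summing-metric value). The hypothesis that no normalized weakly null sequence in $X$ has an $\ell_1$ spreading model yields, via Brunel--Sucheston, a non-$\ell_1$ spreading model for $(d_s^i)$ and hence asymptotic estimates of the form $\|\sum_{s=1}^m \tilde d_s^i\|=o(m)$. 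Promoting these qualitative spreading-model estimates into a uniform quantitative upper-$c_0$-type bound $\|\sum_s c_s d_s^i\|\leq C\|c\|_\infty$, valid for all $i$ in an uncountable $I'\subset I$, is the technical heart of the proof; it will require a transfinite pigeonhole on spreading-model parameters combined with successive Schreier-type extractions, in the spirit of the $JT$ argument of \cite{LancienPetitjeanProchazka2019} but more delicate since $X$ carries no explicit tree structure. Once this uniform Lipschitz bound is in hand, the contradiction described in the first paragraph closes the proof.
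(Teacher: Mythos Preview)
Your overall architecture is right, but the step you single out as ``the technical heart'' cannot be carried out as stated, and this is a genuine gap rather than a matter of filling in details. You aim for a uniform bound
\[
\Big\|\sum_s c_s d_s^i\Big\|\le C\|c\|_\infty
\]
with $C$ independent of $k$ (and of $i$ in some uncountable $I'$). That inequality says the difference sequence $(d_s^i)$ is dominated by the $c_0$ basis; equivalently, the relevant spreading model has bounded partial sums. The hypothesis, however, only excludes $\ell_1$ spreading models: if the spreading model is, say, $\ell_2$, then $\lambda_k^i:=\big\|\sum_{j=1}^k e_j^i\big\|\sim\sqrt{k}$, and the Lipschitz constant of your $\Phi_i^k$ is of order $\lambda_k^i$, unbounded in $k$. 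So no amount of pigeonholing on spreading-model parameters will produce a $C$ independent of $k$; you are asking for strictly more than the assumption gives.

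The paper's proof absorbs this by \emph{normalizing} rather than bounding: one sets
\[
f_\alpha^{(k)}(\bar n)=\frac{2}{3\lambda_k^\alpha}\sum_{j=1}^k x_{\alpha,n_j},
\]
which is $1$-Lipschitz by the very definition of $\lambda_k^\alpha$ (after a Ramsey extraction so that $\|\sum_j(x_{\alpha,n_j}-x_{\alpha,m_j})\|\le \tfrac32\lambda_k^\alpha$ for interlaced $\bar m,\bar n$). The price is that the separation between $f_\alpha^{(k)}$ and $f_\beta^{(k)}$ is now $k/(2\lambda_k^\beta)$, which depends on $\beta$; it is not uniform over pairs. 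The no-$\ell_1$-spreading-model hypothesis enters only through $k/\lambda_k^\alpha\to\infty$ for each fixed $\alpha$. One then argues: if all $Y^{(2k)}$ were separable, Theorem~\ref{T:2} applied to each $k$ shows that $\{\alpha:\rho_g(k/(3\lambda_k^\alpha))>3\}$ is countable; taking the union over $k$ leaves uncountably many $\alpha$ for which $\rho_g(k/(3\lambda_k^\alpha))\le 3$ for \emph{every} $k$, contradicting $k/\lambda_k^\alpha\to\infty$. Two smaller points: the separating functional $\phi_{i,j}$ you posit lives a priori in $X^{***}$, and one needs local reflexivity to pull it down to $X^*$; and the separation must hold for \emph{every} infinite $\M$ (since Theorem~\ref{T:2} produces the $\M$), which is why the paper uses iterated $\limsup$'s rather than fixing $\M$ in advance.
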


\begin{proof} We start with the construction of our well separated $1$-Lipschitz maps from $([\N]^k, d^{(k)}_\mathrm{I})$ to $X$. Since $X$ is separable and $X^{**}$ is not, using Riesz Lemma and an easy transfinite induction, we can build $(x^{**}_\alpha)_{\alpha<\omega_1}$ in $S_{X^{**}}$ such that
$$\forall \alpha<\omega_1,\ d\big(x^{**}_\alpha, \overline{sp}(X \cup \{x^{**}_\beta,\ \beta<\alpha\}\big)>\frac34.$$

Fix now $\alpha<\omega_1$. Since $\ell_1\not\subset X$  it follows from a result by Odell and Rosenthal \cite[Equivalences (1)-(5) on page 376]{OdellRosenthal1975}
that for each $\alpha<\omega_1$ there is a sequence $(x_{\alpha,n})_{n=1}^\infty$ in $S_X$, which converges weak$^*$ in $X^{**}$  to $x^{**}_\alpha$. In particular, the sequence $(x_{\alpha,n})_{n=1}^\infty$ is weakly Cauchy. Since $d(x^{**}_\alpha,X)>\frac34$, we may as well assume, after extracting a subsequence, that $\|x_{\alpha,n}-x_{\alpha,m}\|>\frac34$, for all $n\neq m$.\\
After passing to a further subsequence we can also assume that $(x_{\alpha,n})_{n=1}^\infty$ has a spreading model. But this means, that all the sequences of the  form $(x_{\alpha,n_{2j}}- x_{\alpha,n_{2j-1}})_{j=1}^\infty \subset 2B_X $, with $(n_j)_{j=1}^\infty$ increasing sequence in $\N$, have the same spreading model $(e_j^{\alpha})_{j=1}^\infty$. Define now $\lambda_k^{\alpha}=\|\sum_{j=1}^k e_j^{\alpha}\|.$ Since spreading models generated by weakly null sequences are $1$-suppression unconditional we have that for any $\alpha <\omega_1$, the sequence $(\lambda_k^{\alpha})_k$ is non decreasing. It also follows from our assumptions on $X$  and from the fact that $(x_{\alpha,m}-x_{\alpha,n})_{n\neq m}$ is semi-normalized that
$$\forall \alpha<\omega_1,\ \ \lim_{k \to \infty}\frac{k}{\lambda_k^{\alpha}}=\infty.$$
For fixed $\alpha<\omega_1$ and $k\in \N$, we can apply Ramsey's Theorem and, after passing to a further subsequence,  we can assume that for all
 $\mb,\nb\in [\N]^k$, with $k\le m_1 <n_1<m_2<n_2<\ldots <m_k<n_k$, we have
$$\Big\| \sum_{j=1}^k x_{\alpha, n_j}- x_{\alpha ,m_j}\Big\|\le \frac32\lambda_k^{\alpha}.$$
Using then the usual diagonalization argument we can assume that for all $k\in\N$  and for all $\mb,\nb\in [\N]^k$, with $m_1 <n_1<m_2<n_2<\ldots <m_k<n_k$,
\begin{equation}\label{interlaced}
\Big\| \sum_{j=1}^k x_{\alpha, n_j}- x_{\alpha,m_j}\Big\|\le \frac32\lambda_k^{\alpha}.
\end{equation}

Then, we define for   $\alpha<\omega_1$ and $k\in\N$,
$$f^{(k)}_\alpha: [\N]^k\to X,\quad \nb\mapsto \frac2{3\lambda_k^{\alpha}} \sum_{i=1}^k x_{\alpha, n_i}.$$
It follows from the definition of the interlaced distance, equation (\ref{interlaced}) and the monotonicity of $(\lambda_k^\alpha)_k$ that $f^{(k)}_\alpha$ is 1-Lipschitz.

Consider now $\alpha < \beta \in [1,\omega_1)$. Since $\mathrm{dist}(x^{**}_\beta, sp(x^{**}_\alpha)) > 3/4$, by Hahn-Banach, there exists an $x_{\alpha,\beta}^{***}\in S_{X^{***}}$ with $x_{\alpha,\beta}^{***}(x^{**}_\alpha) = 0$ and $x_{\alpha,\beta}^{***}(x^{**}_\beta) = \mathrm{dist}(x^{**}_\beta, sp(x^{**}_\alpha)) > 3/4$. By the principle of local reflexivity (applied to the space $X^*$) there exists $x^*_{\alpha,\beta}\in S_{X^*}$ with $x^{**}_\alpha(x_{\alpha,\beta}^{*}) = 0$ and $x^{**}_\beta(x_{\alpha,\beta}^{*}) > 3/4$. it therefore follows that for any $\mathbb{M}\in[\mathbb{N}]^\omega$:

\begin{align}
\label{separation}
\sup_{\nb\in [\M]^k} &\big\| f^{(k)}_\alpha(\nb) -f^{(k)}_\beta(\nb)\big\|\\
&\ge \limsup_{n_1\in \M, n_1\to\infty }\limsup_{n_2\in \M, n_2\to\infty }\ldots \limsup_{n_k\in \M, n_k\to\infty }  x^{*}_{\alpha,\beta} \Big(\frac2{3\lambda_k^{\beta}}\sum_{i=1}^k x_{\beta,n_i}- \frac2{3\lambda_k^{\alpha}}\sum_{i=1}^k x_{\alpha,n_i}\Big) \notag \\
&\geq \frac{2}{3\lambda_k^\beta}\frac{3k}{4} = \frac{k}{2\lambda_k^{\beta}}.\notag
\end{align}
This finishes our construction of uncountably many well separated $X$-valued Lipschitz maps.

Assume now that $X$ coarsely embeds into a Banach space $Y$ such that all the iterated duals of $Y$ are separable and let $g:X \to Y$ be such a coarse embedding. Of course, we may assume that $\omega_g(1)\le 1$. Then, for any  $\alpha <\omega_1$ and $k\in \N$, we define $g_\alpha^{(k)}=g \circ f_\alpha^{(k)}$. We have that $g_\alpha^{(k)}$ is 1-Lipschitz from $([\N]^k, d^{(k)}_{\mathrm{I}})$ to $Y$. For a fixed $k\in \N$, we can therefore apply Theorem \ref{T:2} to any uncountable subfamily of $(g_\alpha^{(k)})_{\alpha<\omega_1}$. We then deduce from (\ref{separation}) that for any $k \in \N$, $\{\alpha<\omega_1,\ \rho_{g}(\frac{k}{3\lambda_k^{\alpha}})>3\}$ is countable. This implies that the set $\{\alpha<\omega_1\ \exists k\in \N,\  \rho_{g}(\frac{k}{3\lambda_k^{\alpha}})>3\}$ is also countable and since $[1,\omega_1)$ is uncountable, there exists $\alpha<\omega_1$ such that for all $k\in \N$, $\rho_{g}(\frac{k}{3\lambda_k^{\alpha}})\le 3$. This is in contradiction with the fact that for this given $\alpha<\omega_1$, $\frac{k}{3
\lambda_k^{\alpha}}\nearrow\infty, \text{ if } k\nearrow \infty$ and $\lim_{t \to \infty}\rho_g(t)=\infty$.
\end{proof}

Understanding quantitatively what is the order of the non-separable iterated dual in Theorem F is a very interesting problem. 
\begin{prob}
Assume that $X$ is $\co$, or any separable Banach space with non separable bidual $X^{**}$ and $\ell_1\not\subset X$ such that no spreading model generated by a normalized weakly null sequence is equivalent to the $\ell_1$-unit vector basis. If $X$ coarsely embeds into a Banach space $Y$, does it imply that $Y^{**}$ is non separable?
\end{prob}



\begin{bibsection}

\begin{biblist}[\bibliographystyle{alpha}] 

\bib{Aharoni1974}{article}{
  author={Aharoni, I.},
  title={Every separable metric space is Lipschitz equivalent to a subset of $c\sp {+}\sb {0}$},
  journal={Israel J. Math.},
  volume={19},
  date={1974},
  pages={284--291},
}

\bib{AJO2005}{article}{
  author={Alspach, D.},
  author={Judd, R.},
  author={Odell, E.},
  title={The {S}zlenk index and local {$l_1$}-indices},
  journal={Positivity},
  fjournal={Positivity. An International Journal devoted to the Theory and Applications of Positivity in Analysis},
  volume={9},
  year={2005},
  number={1},
  pages={1--44},
  issn={1385-1292},
  mrclass={46B20 (03E15 54H05)},
  mrnumber={2139115},
  mrreviewer={Y. Benyamini},
  doi={10.1007/s11117-002-9781-0},
  url={http://dx.doi.org/10.1007/s11117-002-9781-0},
}

\bib{AlspachSari2016}{article}{
  author={Alspach, D. E.},
  author={Sar\i, B.},
  title={Separable elastic {B}anach spaces are universal},
  journal={J. Funct. Anal.},
  volume={270},
  year={2016},
  number={1},
  pages={177\ndash 200},
}

\bib{ArgyrosDodos}{article}{
  author={Argyros, S. A.},
  author={Dodos, P.},
  title={Genericity  and  amalgamation  of  classes  of  Banach spaces},
  journal={Adv. Math.},
  volume={209},
  year={2007},
  pages={666--748},
}

\bib{Banach1932}{book}{
 author={Banach, S.},
  title={Th\'eorie des op\'erations
lin\'eaires}, date={1932},
 place={Warszawa},
 }

\bib{Baudier2007}{article}{
  author={Baudier, F.},
  title={Metrical characterization of super-reflexivity and linear type of Banach spaces},
  journal={Arch. Math.},
  volume={89},
  date={2007},
  pages={419\ndash 429},
}

\bib{Bossard2002}{article}{
  author={Bossard, B.},
  title={A coding of separable {B}anach spaces. {A}nalytic and coanalytic families of {B}anach spaces},
  journal={Fund. Math.},
  fjournal={Fundamenta Mathematicae},
  volume={172},
  year={2002},
  number={2},
}

\bib{Bourgain1980}{article}{
  author={Bourgain, J.},
  title={On separable Banach spaces, universal for all separable reflexive spaces},
  journal={Proc. Amer. Math. Soc.},
  volume={79},
  date={1980},
  number={2},
  pages={241--246},
}

\bib{Braga2019}{article}{
  author={de Mendon\c {c}a Braga, B.},
  title={On asymptotic uniform smoothness and nonlinear geometry of Banach spaces},
  journal={J. Inst. Math. Jussieu},
  date={2019},
  note={published online 25 March 2019},
    eprint={arXiv:1808.03254},
}

\bib{Dodos2009}{article}{
  author={Dodos, P.},
  title={On classes of Banach spaces admitting ``small'' universal spaces},
  journal={Trans. Amer. Math. Soc.},
  volume={361},
  year={2009},
  number={12},
  pages={6407--6428},
}

\bib{EskenazisMendelNaor2019}{article}{
  author={Eskenazis, A.},
  author={Mendel, M.},
  author={Naor, A.},
  title={Nonpositive curvature is not coarsely universal},
  journal={Inventiones Math.},
  volume={217},
  year={2019},
  number={3},
  pages={833--886},
}

\bib{Frechet1910}{article}{
author = {Fr{\'e}chet, M.},
title = {{Les dimensions d'un ensemble abstrait}},
journal = {Mathematische Annalen},
year = {1910},
volume = {68},
number = {2},
pages = {145--168}
}

\bib{Fremlin1984}{book}{
  author={Fremlin, D. H.},
  title={Consequences of {M}artin's axiom},
  series={Cambridge Tracts in Mathematics},
  volume={84},
  publisher={Cambridge University Press, Cambridge},
  year={1984},
  pages={xii+325},
  isbn={0-521-25091-9},
  mrclass={03-02 (03E50 54-02 54A35)},
  mrnumber={780933},
  mrreviewer={W. W. Comfort},
  doi={10.1017/CBO9780511896972},
  url={https://doi.org/10.1017/CBO9780511896972},
}

\bib{James1950}{article}{
    author={James, R. C.},
     title={Bases and reflexivity of Banach spaces},
   journal={Ann. of Math. (2)},
    volume={52},
      date={1950},
     pages={518\ndash 527},
}

\bib{JohnsonOdell2005}{article}{
  author={Johnson, W. B.},
  author={Odell, E.},
  title={The diameter of the isomorphism class of a {B}anach space},
  journal={Ann. of Math. (2)},
  volume={162},
  date={2005},
  number={1},
  pages={423\ndash 437},
}

\bib{Kalton2004}{article}{
  author={Kalton, N. J.},
  title={Spaces of Lipschitz and H\"older functions and their applications},
  journal={Collect. Math.},
  volume={55},
  date={2004},
  pages={171\ndash 217},
}

\bib{Kalton2007}{article}{
  author={Kalton, N. J.},
  title={Coarse and uniform embeddings into reflexive spaces},
  journal={Quart. J. Math. (Oxford)},
  volume={58},
  date={2007},
  pages={393\ndash 414},
}

\bib{KaltonLancien2008}{article}{
  author={Kalton, N. J.},
  author={Lancien, G.},
  title={Best constants for {L}ipschitz embeddings of metric spaces
              into {$c_0$}},
  journal={Fund. Math.},
  volume={199},
  date={2008},
  number={3},
  pages={249\ndash 272},
}

\bib{Kechris1995}{book}{
  author={Kechris, A. },
  title={Classical descriptive set theory},
  series={Graduate Texts in Mathematics},
  volume={156},
  publisher={Springer-Verlag, New York},
  year={1995},
  pages={xviii+402},
  isbn={0-387-94374-9},
  mrclass={03E15 (03-01 03-02 04A15 28A05 54H05 90D44)},
  mrnumber={1321597},
  mrreviewer={Jakub Jasi\'nski},
  doi={10.1007/978-1-4612-4190-4},
  url={https://doi-org.lib-ezproxy.tamu.edu:9443/10.1007/978-1-4612-4190-4},
}

\bib{Kuratowski}{article}{
  author={Kuratowski, C.},
  title={Quelques probl\`emes concernant les espaces m\'etriques non s\'eparables},
  journal={Fund. Math},
  volume={25}
  date={1935},
  pages={534--545},
}

\bib{LancienPetitjeanProchazka2019}{article}{
  author={Lancien, G.},
  author={Petitjean, C.},
  author={Proch\'{a}zka, A.},
  title={On the coarse geometry of James spaces},
  journal={Canadian Math. Bull.},
 volume = {63},
      YEAR = {2020},
    NUMBER = {1},
     PAGES = {77--93},
      ISSN = {0008-4395},
   MRCLASS = {Prelim},
  MRNUMBER = {4059808},
}

\bib{Odell2004}{article}{
  author={Odell, E.},
  title={Ordinal indices in Banach spaces},
  journal={Extracta Math.},
  volume={19},
  year={2004},
  number={1},
  pages={93--125},
}

\bib{OdellRosenthal1975}{article}{
  author={Odell, E.},
  author={Rosenthal, H. P.},
  title={A double-dual characterization of separable Banach spaces containing $l\sp {1}$},
  journal={Israel J. Math.},
  volume={20},
  date={1975},
  pages={375\ndash 384},
}

\bib{OdellSchlumprechtZsak2007}{article}{
  author={Odell, E.},
  author={Schlumprecht, Th.},
  author={Zs\'{a}k, A.},
  title={Banach spaces of bounded Szlenk index},
  journal={Studia Math.},
  volume={183},
  date={2007},
  pages={63\ndash 97},
}

\bib{Pelczynski1960}{article}{
  author={Pe\l czy\'{n}ski, A.},
  title={Projections in certain {B}anach spaces},
  journal={Studia Math.},
  volume={19},
  date={1960},
  pages={209\ndash 228},
}

\bib{Urysohn}{article}{
  author={Urysohn, P.S.},
  title={Sur un espace m\'etrique universel},
  journal={C. R. Acad. Sci. Paris},
  volume={180},
  date={1925},
  pages={803--806},
}

\bib{Szlenk1968}{article}{
  author={Szlenk, W.},
  title={The non existence of a separable reflexive Banach space universal for all separable reflexive Banach spaces},
  journal={Studia Math.},
  volume={30},
  date={1968},
  pages={53\ndash 61},
}

\bib{Tsirelson1974}{article}{
  author={Tsirel'son, B. S.},
  title={Not every Banach space contains an imbedding of $l_p$ or $c_0$},
  journal={Funct. Anal. Appl.},
  year={1974},
  volume={8},
  number={2},
  pages={138--141},
}

\end{biblist}

\end{bibsection}

\end{document}